\numberwithin{equation}{section}    
\definecolor{orng}{HTML}{F35400}
\definecolor{bleu}{HTML}{BCE6F2}
\definecolor{dblue}{HTML}{0455BF}
\definecolor{dgreen}{HTML}{02724A}
\definecolor{dgreen2}{HTML}{025951}
\definecolor{dred}{HTML}{D90404}
\definecolor{dviolet}{HTML}{42208C}
\definecolor{labelkey}{HTML}{025951}
\definecolor{refkey}{HTML}{025951}
\definecolor{refkey}{rgb}{0,0.6,0.0}
\definecolor{Brown}{rgb}{0.45,0.0,0.05}
\definecolor{dgreen}{rgb}{0.00,0.49,0.00}
\definecolor{dblue}{rgb}{0,0.18,0.75}
\definecolor{lblue}{rgb}{0,0.7,0.75}
\definecolor{dviolet}{HTML}{9400D3}
\definecolor{pblue}{rgb}{0.1176,0.5647,1}
\definecolor{nblue}{rgb}{0.2,0.3,1}
\definecolor{pgreen}{rgb}{0.1961,0.8039,0.1961}
\definecolor{ngreen}{rgb}{0.0,0.6,0.3}
\definecolor{pred}{rgb}{1.0,0.2706,0.0}
\definecolor{magenta}{HTML}{ff00ff}
\definecolor{hotmagenta}{rgb}{1.0, 0.11, 0.81}
\definecolor{dorng}{rgb}{0.91,0.41,0.17}
\definecolor{dgray}{rgb}{0.41,0.41,0.41}
\definecolor{azure}{rgb}{0.0, 0.5, 1.0}
\setlist{itemsep=-2.0pt}
\g@addto@macro\th@plain{
\thm@headfont{\bfseries\sffamily}
\thm@notefont{}}
\g@addto@macro\th@definition{
\thm@headfont{\bfseries\sffamily}
\thm@notefont{}}
\g@addto@macro\th@remark{
\thm@headfont{\bfseries\sffamily}
\thm@notefont{}}
\theoremstyle{plain}
\newtheorem{theorem}{Theorem}[section]
\newtheorem{proposition}[theorem]{Proposition}
\newtheorem{corollary}[theorem]{Corollary}
\newtheorem{lemma}[theorem]{Lemma}
\theoremstyle{definition}
\newtheorem{definition}[theorem]{Definition}
\newtheorem{example}[theorem]{Example}
\newtheorem{problem}[theorem]{Problem}
\theoremstyle{remark}
\newtheorem{remark}[theorem]{Remark}
\newtheorem{algorithm}[theorem]{Algorithm}
\DeclareMathDelimiterSet{\scal}[2]{
\selectdelim[l]<{#1}
\mathpunct{}\selectdelim[p]|
{#2}\selectdelim[r]>}
\DeclareMathDelimiterSet{\EC}[2]{
\mathsf{E}\selectdelim[l]({#1}
\mathpunct{}\selectdelim[p]|
{#2}\selectdelim[r])}
\newcommand{\menge}[2]{\bigl\{{#1}\mid{#2}\bigr\}} 
\DeclareMathDelimiterSet{\Menge}[2]{\selectdelim[l]\{
{#1}\selectdelim[m]|{#2}\selectdelim[r]\}}
\def\upintkern@{\mkern-7mu\mathchoice{\mkern-3.5mu}{}{}{}}
\def\upintdots@{\mathchoice{\mkern-4mu\@cdots\mkern-4mu}%
{{\cdotp}\mkern1.5mu{\cdotp}\mkern1.5mu{\cdotp}}%
{{\cdotp}\mkern1mu{\cdotp}\mkern1mu{\cdotp}}%
{{\cdotp}\mkern1mu{\cdotp}\mkern1mu{\cdotp}}}
\DeclareFontFamily{OMX}{mdbch}{}
\DeclareFontShape{OMX}{mdbch}{m}{n}{ <->s * [0.8]  mdbchr7v }{}
\DeclareFontShape{OMX}{mdbch}{b}{n}{ <->s * [0.8]  mdbchb7v }{}
\DeclareFontShape{OMX}{mdbch}{bx}{n}{<->ssub * mdbch/b/n}{}
\DeclareSymbolFont{uplargesymbols}{OMX}{mdbch}{m}{n}
\DeclareMathSymbol{\upintop}{\mathop}{uplargesymbols}{82}
\DeclareMathSymbol{\upointop}{\mathop}{uplargesymbols}{"48}
\renewcommand{\int}{\DOTSI\upintop\ilimits@}
\renewcommand{\oint}{\DOTSI\upointop\ilimits@}
\newcommand{\RR}{\mathbb{R}}
\newcommand{\NN}{\mathbb{N}}
\newcommand{\XX}{\EuScript{X}}
\newcommand{\YY}{\EuScript{Y}}
\newcommand{\CS}{\mathsf{C}}
\newcommand{\HS}{\mathsf{H}}
\newcommand{\GS}{\mathsf{G}}
\newcommand{\ZS}{\mathsf{Z}}
\newcommand{\zS}{\mathsf{z}}
\newcommand{\nS}{{\mathsf{n}}}
\newcommand{\nnn}{\mathsf{n}\in\mathbb{N}}
\newcommand{\jjj}{\mathsf{j}\in\mathbb{N}}
\newcommand{\iS}{\mathsf{i}}
\newcommand{\dS}{\mathsf{d}}
\newcommand{\jS}{\mathsf{j}}
\newcommand{\kS}{\mathsf{k}}
\newcommand{\xS}{\mathsf{x}}
\newcommand{\LS}{\mathsf{L}}
\newcommand{\TS}{\mathsf{T}}
\newcommand{\BE}{\EuScript{B}}
\newcommand{\FE}{\EuScript{F}}
\newcommand{\unif}{\ensuremath{\text{\rmfamily uniform}}}
\newcommand{\pinf}{{+}\infty}
\newcommand{\minf}{{-}\infty}
\newcommand{\zeroun}{\intv[o]{0}{1}}
\newcommand{\RP}{\intv[r]0{0}{\pinf}}
\newcommand{\RPP}{\intv[o]0{0}{\pinf}}
\newcommand{\emp}{\varnothing}
\newcommand{\WC}{\ensuremath{{\mathfrak W}}}
\newcommand{\SC}{\ensuremath{{\mathfrak S}}}
\newcommand{\Sum}{\displaystyle\sum}
\newcommand{\pushfwd}%
{\ensuremath{\mbox{\Large$\,\triangleright\,$}}}
\DeclareMathOperator{\Argmin}{Argmin}
\DeclareMathOperator{\argmin}{argmin}
\newcommand{\Id}{\mathsf{Id}}
\DeclareMathOperator{\Fix}{Fix}
\DeclareMathOperator{\zer}{zer}
\DeclareMathOperator{\inte}{int}
\DeclareMathOperator{\prox}{prox}
\DeclareMathOperator{\proj}{proj}
\newcommand{\EE}{\mathsf{E}}
\newcommand{\PP}{\mathsf{P}}
\renewcommand{\leq}{\leqslant}
\renewcommand{\geq}{\geqslant}
\newcommand{\weakly}{\rightharpoonup}
\newcommand{\Pas}{\text{\normalfont$\PP$-a.s.}}
\renewenvironment{abstract}{%
\vspace*{-0.50cm}
\small
\quotation%
\noindent%
{\normalfont\bfseries\sffamily
\nobreak\abstractname\ }%
}{%
\endquotation%
\medskip
}
\renewcommand{\abstractname}{Abstract.}
\newcommand\keywordsname{Keywords.}
\newenvironment{keywords}
{\renewcommand\abstractname{\keywordsname}\begin{abstract}}
{\end{abstract}}
\newcommand{\email}[1]{\href{mailto:#1}{\nolinkurl{#1}}}
\renewcommand*\Affilfont{\normalfont\normalsize}
\newcommand\affilcr{\protect\\ \protect\Affilfont}
\renewcommand\AB@affilsepx{\protect\\[0.5em]}
\author[1]{Patrick L. Combettes}
\affil[1]{North Carolina State University
\affilcr
Department of Mathematics
\affilcr
Raleigh, NC 27695, USA
\affilcr
\email{plc@math.ncsu.edu}
}
\author[2]{Javier I. Madariaga}
\affil[2]{North Carolina State University
\affilcr
Department of Mathematics
\affilcr
Raleigh, NC 27695, USA
\affilcr
\email{jimadari@ncsu.edu}
}
\begin{document}

\title{A Geometric Framework for\\ 
Stochastic Iterations\thanks{Contact author: P. L.
Combettes. Email: \email{plc@math.ncsu.edu}.
This work was supported by the National
Science Foundation under grant DMS-2513409.
}}

\date{~}

\maketitle

\thispagestyle{empty}
\begin{abstract}
This paper concerns models and convergence principles for dealing
with stochasticity in a wide range of algorithms arising in
nonlinear analysis and optimization in Hilbert spaces. It proposes
a flexible geometric framework within which existing solution
methods can be recast and improved, and new ones can be designed.
Almost sure weak, strong, and linear convergence results are
established in particular for stochastic fixed point iterations, 
the stochastic gradient descent method, and stochastic extrapolated
parallel algorithms for feasibility problems. In these areas, 
the proposed algorithms exceed the features and convergence 
guarantees of the state of the art. Numerical applications to 
signal and image recovery are provided.
\end{abstract}

\begin{keywords}
Convex feasibility,
convex optimization,
monotone inclusion,
random fixed point algorithm, 
stochastic iterations.
\end{keywords}

\begin{MSC}
47N10,
90C15,
90C48,
47J26,
62L20.
\end{MSC}
\newpage

\setcounter{page}{1}
\section{Introduction}
\label{sec:1}

The objective of this paper is to propose a general algorithm
framework and convergence principles for dealing with stochasticity
in a broad class of algorithms arising in optimization and
numerical nonlinear analysis. Throughout, $\HS$ is a separable real
Hilbert space and the underlying probability space 
$(\upOmega,\FE,\PP)$ is complete. 
We denote by $\ZS\subset\HS$ the set of 
solutions to the problem of interest and assume that it is 
nonempty and closed. 

In the recent paper \cite{Acnu24}, we showed that a simple geometry
underlies most deterministic monotone operator splitting algorithms
and that, by exploiting this geometry, the convergence analysis of
existing methods could be simplified and improved. It was also
argued that this geometric framework provides a flexible template
to create new algorithms. The basic idea is to construct the update
at iteration $\nS$ of a deterministic algorithm for finding a point
in the solution set $\mathsf{Z}$ as a relaxed projection
$\xS_{\nS+1}=\xS_{\nS}+\uplambda_{\nS}(\proj_{\HS_{\nS}}\xS_{\nS}
-\xS_{\nS})$ onto a half-space 
$\HS_{\nS}=\menge{\zS\in\HS}{\scal{\zS}{\mathsf{t}_{\nS}^*}_{\HS}
\leq\upeta_{\nS}}$
containing $\mathsf{Z}$ as follows (see Fig.~\ref{fig:1}(a)). 

\begin{algorithm}
\label{algo:1}
Let $\mathsf{x}_{\mathsf{0}}\in\HS$ and iterate
\vspace{-2mm}
\begin{equation}
\label{e:0}
\begin{array}{l}
\text{for}\;\nS=0,1,\ldots\\
\left\lfloor
\begin{array}{l}
\text{take}\;\mathsf{t}^*_{\nS}\in\HS\;
\text{and}\;\upeta_{\nS}\in\RR
\;\text{such that}\;(\forall\zS\in\mathsf{Z})\;\;
\scal{\mathsf{z}}{\mathsf{t}_{\nS}^*}_{\HS}\leq\upeta_{\nS}\\[1mm]
\upalpha_{\nS}=
\begin{cases}
\dfrac{\scal{\xS_{\nS}}
{\mathsf{t}_{\nS}^*}_{\HS}-\upeta_{\nS}}
{\|\mathsf{t}_{\nS}^*\|_{\HS}^2}
&\text{if}\;\;
\scal{\xS_{\nS}}{\mathsf{t}_{\nS}^*}_{\HS}>\upeta_{\nS};\\
\mathsf{0},&\text{otherwise}
\end{cases}\\[7mm]
\mathsf{d}_{\nS}=\upalpha_{\nS}\mathsf{t}_{\nS}^*\\
\text{take}\;\uplambda_{\nS}\in\left]0,2\right[\\
\mathsf{x}_{\nS+1}=\mathsf{x}_{\nS}-
\uplambda_{\nS}\mathsf{d}_{\nS}.
\end{array}
\right.\\
\end{array}
\end{equation} 
\end{algorithm}

Our approach consists in extending the above geometric construction
to a general stochastic environment by making the following changes
at iteration $\nS$:
\begin{itemize}
\item
The deterministic quantities $\mathsf{t}_{\nS}^*$ and 
$\upeta_{\nS}$ are replaced by random ones.
\item
A stochastic tolerance is added in the construction of the
outer approximation.
\item
The relaxation parameter $\uplambda_{\nS}$ is now random and no 
longer restricted to the interval $\left]0,2\right[$.
\end{itemize}
This leads to the following algorithmic scheme 
(see Section~\ref{sec:21} for notation).

\begin{algorithm}
\label{algo:2}
Let $x_{\mathsf{0}}\in L^2(\upOmega,\FE,\PP;\HS)$ and iterate
\begin{equation}
\label{e:a2}
\begin{array}{l}
\text{for}\;\nS=0,1,\ldots\\
\left\lfloor
\begin{array}{l}
\XX_{\nS}=\upsigma(x_{\mathsf{0}},\dots,x_{\nS})\\
\text{take}\;
t^*_{\nS}\in L^2(\upOmega,\FE,\PP;\HS)\; 
\text{and}\;\eta_{\nS}\in{L^1(\upOmega,\FE,\PP;\RR)}
\;\text{such that}\\[2mm]
\qquad\begin{cases}
\dfrac{\mathsf{1}_{[t_{\nS}^*\neq0]}\mathsf{1}_{
\left[\scal{x_{\nS}}{t_{\nS}^*}_{\HS}>\eta_{\nS}\right]}\eta_{\nS}}
{\|t_{\nS}^*\|_{\HS}+\mathsf{1}_{[t_{\nS}^*=0]}}\in
L^2(\upOmega,\FE,\PP;\RR);\\[3mm]
\alpha_{\nS}=
\dfrac{\mathsf{1}_{[t_{\nS}^*\neq0]}\mathsf{1}_{
\left[\scal{x_{\nS}}{t_{\nS}^*}_{\HS}>\eta_{\nS}\right]}
\bigl(\scal{x_{\nS}}{t_{\nS}^*}_{\HS}-\eta_{\nS}\bigr)}
{\|t_{\nS}^*\|_{\HS}^2+\mathsf{1}_{[t_{\nS}^*=0]}};\\
(\forall\mathsf{z}\in\ZS)\;\;
\scal{\mathsf{z}}{\EC{\alpha_{\nS}t^*_{\nS}}{\XX_{\nS}}}_{\HS}\leq
\EC{\alpha_{\nS}\eta_{\nS}}{\XX_{\nS}}+
\varepsilon_{\nS}(\cdot,\mathsf{z})\;\Pas\\
\hspace{36mm}\text{where}\;\varepsilon_{\nS}(\cdot,\mathsf{z})\in 
L^1(\upOmega,\FE,\PP;\RP)
\end{cases}\\
d_{\nS}=\alpha_{\nS}t_{\nS}^*\\
\text{take}\;\lambda_{\nS}\in L^\infty(\upOmega,\FE,\PP;\RPP)\\
x_{\nS+1}=x_{\nS}-\lambda_{\nS} d_{\nS}.
\end{array}
\right.\\
\end{array}
\end{equation}
\end{algorithm}

Implicitly, Algorithm~\ref{algo:2} constructs a random outer
approximation $S_{\nS}$ to $\ZS$, namely 
\begin{equation}
\ZS\subset S_{\nS}=\menge{\mathsf{z}\in\HS}
{\scal{\mathsf{z}}{\EC{\alpha_{\nS}t^*_{\nS}}{\XX_{\nS}}}_{\HS}
\leq\EC{\alpha_{\nS}\eta_{\nS}}{\XX_{\nS}}+
\varepsilon_{\nS}(\cdot,\mathsf{z})}\;\Pas 
\end{equation} 
and the
update $x_{\nS+1}$ is obtained by performing a relaxed projection
of the current iterate $x_{\nS}$ onto the simpler set
\begin{equation}
H_{\nS}=\menge{\mathsf{z}\in\HS}
{\scal{\mathsf{z}}{t_{\nS}^*}_{\HS}\leq\eta_{\nS}},
\end{equation} 
which is a random affine half-space. It should be noted that, while
$\ZS\subset S_{\nS}$, the more restrictive inclusion
$\ZS\subset H_{\nS}$ does not hold in general 
(see Fig.~\ref{fig:1}(b)). In terms
of modeling, choosing $t^*_{\nS}$ and $\eta_{\nS}$ such that
$\ZS\subset H_{\nS}$ would restrict the scope of the processes we
intend to model, whereas the more general inclusion 
$\ZS\subset S_{\nS}$ offers more flexibility. Let us observe that,
if $\varepsilon_{\nS}=0$, $S_{\nS}$ is also a random half-space.
However, as the following example shows, projecting onto it is not
judicious.

\begin{example}
\label{ex:1}
For every $\kS\in\{1,\ldots,\mathsf{p}\}$, let $\CS_{\kS}$ be a
closed convex subset of $\HS$. Suppose that
$\ZS=\bigcap_{\kS=1}^{\mathsf{p}}\CS_{\kS}\neq\emp$ and 
implement Algorithm~\ref{algo:2} with $\lambda_{\nS}=1$, 
$\varepsilon_{\nS}=0$,
$t_{\nS}^*=x_{\nS}-\proj_{\CS_{k}}x_{\nS}$, and
$\eta_{\nS}=\scal{\proj_{\CS_{k}}x_{\nS}}{t_{\nS}^*}_{\HS}$, where 
the random variable $k$ is uniformly distributed in 
$\{1,\ldots,\mathsf{p}\}$. Then
$\EC{t_{\nS}^*}{\XX_{\nS}}=x_{\nS}-
\mathsf{p}^{-1}\sum_{\mathsf{k}=1}^\mathsf{p}
\proj_{\CS_{\mathsf{k}}}x_{\nS}$ and therefore 
\begin{align}
\mathsf{Z}
&\subset\Menge3{\mathsf{z}\in\HS}
{\sum_{\mathsf{k}=1}^\mathsf{p}
\scal1{\mathsf{z}-\proj_{\CS_{\mathsf{k}}}x_{\nS}}
{x_{\nS}-\proj_{\CS_{\mathsf{k}}}x_{\nS}}_{\HS}\leq 0}\nonumber\\
&=\Menge1{\mathsf{z}\in\HS}
{\scal1{\mathsf{z}}{\EC{t_{\nS}^*}{\XX_{\nS}}}_{\HS}
\leq\EC1{\eta_{\nS}}{\XX_{\nS}}}\nonumber\\
&=S_{\nS}\;\;\Pas
\end{align}
Thus, Algorithm~\ref{algo:2} yields the random iteration process
$x_{\nS+1}=\proj_{\CS_{k}}x_{\nS}$ in which a single, randomly
selected set is projected onto at iteration $\nS$. By contrast,
projecting onto $S_{\nS}$ would yield the barycentric projection
method $x_{\nS+1}=\mathsf{p}^{-1}\sum_{\kS=1}^\mathsf{p}
\proj_{\CS_{\kS}}x_{\nS}$, which is deterministic and imposes the
computation of all $\mathsf{p}$ projections at each iteration. 
\end{example}

Another new feature of Algorithm~\ref{algo:2} is that the
relaxation parameters $(\lambda_{\nS})_{\nnn}$ are random. In
addition, they need not be confined to the range $\left]0,2\right[$
imposed in deterministic algorithms
\cite{Livre1,Byrn14,Aiep96,Acnu24,Dong22}. We call such relaxations
\emph{super relaxations}. 

\begin{figure}[h!t]
\begin{center}
\scalebox{0.7} 
{
\begin{pspicture}(-1.5,-1.3)(12.0,5.5) 
\definecolor{color0}{rgb}{0.7,0.7,1.0}
\definecolor{color0b}{rgb}{0.53,0.729,0.808}
\definecolor{color98b}{rgb}{0.8,0.0,0.0}
\rput{5.0}(-1.58,2.6){\psellipse[linewidth=0.06,dimen=inner,%
fillstyle=solid,fillcolor=color0](4.1,-2.06)(3.0,0.8)}
\psline[linewidth=0.06cm,linestyle=solid,linecolor=dred]%
(5.0,-1.2)(8.2,4.9)
\psline[linewidth=0.04cm,arrowsize=0.26cm,linestyle=solid]{->}%
(7.3,0.8)(6.35,1.27)
\rput(4.85,-0.5){\color{dred}\Large${\mathsf{H}_{\mathrm{n}}}$}
\rput(7.3,0.8){\Large$\bullet$}
\rput(2.4,0.9){\Large$\boldsymbol{\mathsf{Z}}$}
\rput(7.7,0.56){\Large$\mathsf{x}_{\mathrm{n}}$}
\rput(6.29,1.27){\Large$\bullet$}
\rput(6.0,1.8){\Large$\mathsf{x}_{\mathrm{n}+1}$}
\end{pspicture} 

\begin{pspicture}(1.0,-1.3)(12.0,5.5) 
\definecolor{color0}{rgb}{0.7,0.7,1.0}
\definecolor{color0b}{rgb}{0.53,0.729,0.808}
\definecolor{color98b}{rgb}{0.8,0.0,0.0}
\rput{5.0}(-1.58,2.6){\psellipse[linewidth=0.06,dimen=inner,%
fillstyle=solid,fillcolor=color0](4.1,-2.06)(3.0,0.8)}
\psline[linewidth=0.06cm,linestyle=solid,linecolor=dgreen]%
(5.6,-1.2)(8.3,4.9)
\psline[linewidth=0.06cm,linestyle=solid,linecolor=dblue]%
(3.1,-1.2)(8.6,4.9)
\psline[linewidth=0.05cm,arrowsize=0.26cm,linestyle=solid]{->}%
(7.3,0.8)(6.07,1.96)
\rput(5.4,-0.5){\color{dgreen}\Large $S_{\mathrm{n}}$}
\rput(3.1,-0.5){\color{dblue}\Large${H_{\mathrm{n}}}$}
\rput(7.3,0.8){\Large$\bullet$}
\rput(2.4,0.9){\Large$\boldsymbol{\mathsf{Z}}$}
\rput(7.7,0.56){\Large$x_{\mathrm{n}}$}
\rput(6.0,2.0){\Large$\bullet$}
\rput(5.1,2.2){\Large$x_{\mathrm{n}+1}$}
\end{pspicture} 
}

\caption{Geometry of algorithms for finding a point in
$\mathsf{Z}$ with $\uplambda_{\nS}=1$.
(a) Left: Iteration $\nS$ of the deterministic 
Algorithm~\ref{algo:1}.
(b) Right: Iteration $\nS$ of the stochastic 
Algorithm~\ref{algo:2} with $\varepsilon_{\nS}=0$.}
\label{fig:1}
\end{center}
\end{figure}

The deterministic setting of Algorithm~\ref{algo:1} is known to
capture a vast array of iterative methods in nonlinear analysis and
optimization \cite{Acnu24}. Our premise is that
Algorithm~\ref{algo:2} can serve the same purpose for their
stochastic counterparts. Weak, strong, and linear convergence
results will be established for Algorithm~\ref{algo:2}. In turn,
these results will be applied to fixed point and feasibility
problems, where they will be shown to provide new stochastic
algorithms that go beyond the state of the art not only in terms of
convergence guarantees but also of flexibility of implementation
and scope.

The remainder of the paper is organized as follows. Notation and
preliminary results are introduced in Section~\ref{sec:2}. The main
theorems are presented in Section~\ref{sec:3}, where the asymptotic
properties of Algorithm~\ref{algo:2} are established.
Section~\ref{sec:4} is devoted to an application of the proposed
theory to a randomly relaxed Krasnosel'ski\u\i--Mann iteration
process and includes new results on the convergence of the
stochastic gradient method. Section~\ref{sec:5} concerns an
application to randomly activated and relaxed extrapolated fixed
point methods for common fixed point problems in the presence of
possibly uncountably many operators. These results significantly
improve existing ones. Section~\ref{sec:6} concludes the paper with
applications to signal and image recovery. Applications of
the results of Section~\ref{sec:3} to the design and the 
analysis of stochastic splitting algorithms for monotone 
inclusion problems are addressed in the companion paper
\cite{Sadd25}.

\section{Notation and background}
\label{sec:2}

\subsection{Notation}
\label{sec:21}
We use sans-serif letters to denote deterministic variables and
italicized serif letters to denote random variables. 

The Hilbert space $\HS$ has identity
operator $\Id$, scalar product $\scal{\cdot}{\cdot}_{\HS}$,
and associated norm $\|\cdot\|_{\HS}$. The symbols $\weakly$ and
$\to$ denote weak and strong convergence in $\HS$, respectively.
The sets of strong and weak sequential cluster points of a sequence
$(\mathsf{x}_{\nS})_{\nnn}$ in $\HS$ are denoted by
$\SC(\mathsf{x}_{\nS})_{\nnn}$ and
$\WC(\mathsf{x}_{\nS})_{\nnn}$, respectively. The distance
function of a set $\CS\subset\HS$ is denoted by 
$\dS_{\CS}\colon\mathsf{x}\mapsto
\inf_{\mathsf{y}\in\CS}\|\mathsf{y}-\mathsf{x}\|_{\HS}$ and the
projection onto a nonempty closed convex set $\CS\subset\HS$ is
denoted by $\proj_{\CS}$. The fixed
point set of an operator $\TS\colon\HS\to\HS$ is 
$\Fix\TS=\menge{\mathsf{x}\in\HS}{\TS\mathsf{x}=\mathsf{x}}$. 
The following notion will play an important role in
Sections~\ref{sec:4} and \ref{sec:5}; see
\cite[Proposition~2.4]{Sico10} for examples of demiregular
operators.

\begin{definition}{\protect{\cite{Sico10}}}
\label{d:10}
$\mathsf{T}\colon\HS\to\HS$ is demiregular at $\xS\in\HS$
if, for every sequence $(\xS_{\nS})_{\nnn}$ in $\HS$ such that
$\xS_{\nS}\weakly\xS$ and $\TS\xS_{\nS}\to\TS\xS$, we have
$\xS_{\nS}\to\xS$.
\end{definition}

Let $(\upXi,\EuScript{G})$ be a measurable space. A $\upXi$-valued
random variable is a measurable mapping
$x\colon(\upOmega,\FE,\PP)\to(\upXi,\EuScript{G})$. 
Given $x\colon\upOmega\to\upXi$ and $\mathsf{S}\in\EuScript{G}$, we
set $[x\in\mathsf{S}]=\menge{\upomega\in\upOmega}
{x(\upomega)\in\mathsf{S}}$. Let $x$ and $y$ be random variables 
from $(\upOmega,\FE,\PP)$ to $(\upXi,\EuScript{G})$. Then $y$ is a
copy of $x$ if, for every $\mathsf{S}\in\EuScript{G}$, 
$\PP([x\in\mathsf{S}])=\PP([y\in\mathsf{S}])$.
The Borel $\upsigma$-algebra of $\HS$ is denoted by $\BE_{\HS}$.
An $\HS$-valued random variable is a measurable 
mapping $x\colon(\upOmega,\FE)\to(\HS,\BE_{\HS})$. 
Let $\mathsf{p}\in\left[1,\pinf\right[$ and let $\XX$ be a
sub $\upsigma$-algebra of $\FE$. Then
$L^\mathsf{p}(\upOmega,\XX,\PP;\HS)$ denotes the space of 
equivalence classes of $\Pas$ equal $\HS$-valued random variables
$x\colon(\upOmega,\XX,\PP)\to(\HS,\BE_{\HS})$ such that 
$\EE\|x\|^\mathsf{p}_{\HS}<\pinf$. Endowed with the norm 
\begin{equation}
\|\cdot\|_{L^\mathsf{p}(\upOmega,\XX,\PP;\HS)}\colon
x\mapsto\EE^{1/\mathsf{p}}\|x\|_{\HS}^\mathsf{p}
=\brk3{\int_{\upOmega}\|x(\upomega)\|_{\HS}^{\mathsf{p}}
\PP(d\upomega)}^{1/\mathsf{p}},
\end{equation}
$L^\mathsf{p}(\upOmega,\XX,\PP;\HS)$ is a
real Banach space and $L^2(\upOmega,\XX,\PP;\HS)$ is
a real Hilbert space with scalar product 
\begin{equation}
\scal{\cdot}{\cdot}_{L^2(\upOmega,\XX,\PP;\HS)}\colon
(x,y)\mapsto \EE\scal{x}{y}_{\HS}=\int_{\upOmega}
\scal1{x(\upomega)}{y(\upomega)}_{\HS}\PP(d\upomega).
\end{equation}
Further,
\begin{equation}
(\forall\mathsf{S}\in\BE_{\HS})\quad
L^\mathsf{p}(\upOmega,\XX,\PP;\mathsf{S})=\Menge1{x\in
L^\mathsf{p}(\upOmega,\XX,\PP;\HS)}{x\in\mathsf{S}\;\Pas}.
\end{equation}
The $\upsigma$-algebra generated by a family $\upPhi$ of random 
variables is denoted by $\upsigma(\upPhi)$. Let 
$\mathfrak{F}=(\FE_{\nS})_{\nnn}$ be a sequence of sub 
$\upsigma$-algebras of $\FE$ such that $(\forall\nnn)$ 
$\FE_{\nS}\subset\FE_{\nS+1}$. Then $\ell_+(\mathfrak{F})$ is the
set of sequences of $\RP$-valued random variables
$(\xi_{\nS})_{\nnn}$ such that, for every $\nnn$, $\xi_{\nS}$ is
$\FE_{\nS}$-measurable. We set
\begin{equation}
(\forall\mathsf{p}\in\RPP)\quad\ell^\mathsf{p}_+(\mathfrak{F})=
\Menge3{(\xi_{\nS})_{\nnn}\in\ell_+(\mathfrak{F})}
{\sum_{\nnn}\xi_{\nS}^\mathsf{p}<\pinf\;\Pas}.
\end{equation}
We say that $\varphi\colon\upOmega\times\HS\to\RR$ is a
Carath\'eodory integrand if
\begin{equation}
\begin{cases}
\text{for}\;\PP\text{-almost every}\;\upomega\in\upOmega,
&\varphi(\upomega,\cdot)\;\text{is continuous};\\
\text{for every}\;\xS\in\HS,&\varphi(\cdot,\xS)\;
\text{is}\;\FE\text{-measurable.}
\end{cases}
\end{equation}
We denote by $\mathfrak{C}(\upOmega,\FE,\PP;\HS)$ the class of 
Carath\'eodory integrands
$\varphi\colon\upOmega\times\HS\to\RP$ such that
\begin{equation}
\label{e:c}
\brk1{\forall x\in L^2(\upOmega,\FE,\PP;\HS)}\quad
\int_{\upOmega}
\varphi\brk1{\upomega,x(\upomega)}\PP(d\upomega)<\pinf.
\end{equation}
Given $\varphi\in\mathfrak{C}(\upOmega,\FE,\PP;\HS)$ and 
$x\in L^2(\upOmega,\FE,\PP;\HS)$, we set 
$\varphi(\cdot,x)\colon\upomega\mapsto
\varphi(\upomega,x(\upomega))$.

The reader is referred to \cite{Livre1} for background on convex
analysis and fixed point theory, and to \cite{Hyto16,Ledo91} for
background on probability in Hilbert spaces. 

\subsection{Preliminary results}
\label{sec:22}

\begin{definition}
Let $\XX$ be a sub $\upsigma$-algebra of $\FE$, 
$\CS\in\BE_{\HS}$, and $x$ be an $\HS$-valued random 
variable. Then $x$ is a $\CS$-valued 
$\XX$-simple mapping if there exist a finite family of 
disjoint sets $(\mathsf{F}_{\iS})_{1\leq\iS\leq\mathsf{N}}$ in 
$\XX$ and a family of vectors
$(\mathsf{z}_{\iS})_{1\leq\iS\leq\mathsf{N}}$ in $\CS$ such
that 
\begin{equation}
\bigcup_{\iS=1}^{\mathsf{N}}\mathsf{F}_{\iS}=\upOmega\;\;
\text{and}\;\;
x=\sum_{\iS=1}^{\mathsf{N}}\mathsf{1}_{\mathsf{F}_{\iS}}
\mathsf{z}_{\iS}\;\;\Pas
\end{equation}
\end{definition}

\begin{remark}
Let $\mathsf{p}\in[1,\pinf[$. Then every $\CS$-valued
$\XX$-simple mapping is in $L^\mathsf{p}(\upOmega,\XX,\PP;\CS)$.
\end{remark}

The following proposition is an adaptation of 
\cite[Corollary~1.1.7]{Hyto16}.

\begin{proposition}
\label{p:seq.simple}
Let $\CS$ be a nonempty closed subset of $\HS$, $\XX$ be a
sub $\upsigma$-algebra of $\FE$, $\mathsf{p}\in[1,\pinf[$,
and $x\in L^{\mathsf{p}}(\upOmega,\XX,\PP;\CS)$. Then 
there exists a sequence $(x_{\nS})_{\nnn}$ of $\CS$-valued 
$\XX$-simple mappings that converges strongly $\Pas$ to $x$ with 
$\sup_{\nnn}\|x_{\nS}\|_{\HS}^{\mathsf{p}}
\leq\|x\|_{\HS}^{\mathsf{p}}+1\;\Pas$ 
\end{proposition}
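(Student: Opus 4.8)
The plan is to mimic the classical construction of an approximating sequence of simple functions, but with the extra care needed to keep all the vectors inside the closed set $\CS$ and to obtain the stated almost-sure pointwise bound. First I would invoke the Pettis measurability theorem: since $\HS$ is separable and $x$ is $\XX$-measurable, $x$ is strongly $\XX$-measurable, and its essential range is $\Pas$ contained in the separable closed set $\CS$. Fix a countable dense subset $(\mathsf{z}_{\iS})_{\iS\in\NN}$ of $\CS$. For each $\nnn$, I would define an $\XX$-measurable selection of a nearest point among the first $\mathsf{N}_{\nS}$ dictionary elements $\mathsf{z}_{1},\dots,\mathsf{z}_{\mathsf{N}_{\nS}}$, where $\mathsf{N}_{\nS}\to\pinf$: concretely, on the $\XX$-measurable set where $\mathsf{z}_{\iS}$ is the first index attaining $\min_{1\le\jS\le\mathsf{N}_{\nS}}\|x-\mathsf{z}_{\jS}\|_{\HS}$, put $x_{\nS}=\mathsf{z}_{\iS}$. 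This yields a $\CS$-valued $\XX$-simple mapping (finitely many values, $\XX$-measurable level sets whose union is $\upOmega$), and by density of $(\mathsf{z}_{\iS})$ in $\CS$ together with $x(\upomega)\in\CS$ for $\PP$-a.e.\ $\upomega$, one gets $\|x_{\nS}-x\|_{\HS}\to0$ $\Pas$

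The one remaining point is the uniform bound $\sup_{\nnn}\|x_{\nS}\|_{\HS}^{\mathsf{p}}\le\|x\|_{\HS}^{\mathsf{p}}+1\;\Pas$, which does not follow automatically from the nearest-point construction because a dictionary element close to $x(\upomega)$ can still have slightly larger norm. To fix this I would run the nearest-point selection to first produce $\widetilde{x}_{\nS}$ with $\|\widetilde{x}_{\nS}-x\|_{\HS}\to0$ $\Pas$, then pass to a subsequence (or simply refine $\mathsf{N}_{\nS}$) so that $\|\widetilde{x}_{\nS}-x\|_{\HS}\le1/\nS$ on a set of full measure; one can then modify $\widetilde{x}_{\nS}$ on the exceptional null set without affecting the $L^{\mathsf p}$ class, and observe that $\|\widetilde{x}_{\nS}\|_{\HS}\le\|x\|_{\HS}+1/\nS$, whence $\|\widetilde{x}_{\nS}\|_{\HS}^{\mathsf p}\le(\|x\|_{\HS}+1)^{\mathsf p}$ — which is not quite the claimed bound. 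The cleaner route, and the one I expect the authors take, is the truncation trick of \cite[Corollary~1.1.7]{Hyto16}: having built simple $\widetilde{x}_{\nS}\to x$ $\Pas$ with values in $\CS$, replace each value $\mathsf{z}$ of $\widetilde{x}_{\nS}$ by itself if $\|\mathsf{z}\|_{\HS}^{\mathsf p}\le\|x\|_{\HS}^{\mathsf p}+1$ and otherwise by the best available dictionary element; since $x(\upomega)\in\CS$ and $\widetilde{x}_{\nS}(\upomega)\to x(\upomega)$, eventually $\|\widetilde{x}_{\nS}(\upomega)\|_{\HS}^{\mathsf p}<\|x(\upomega)\|_{\HS}^{\mathsf p}+1$, so this truncation is $\Pas$ active only for finitely many $\nS$ and does not destroy convergence. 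The resulting sequence is $\CS$-valued (each replacement stays in the dictionary, hence in $\CS$), $\XX$-simple, converges strongly $\Pas$ to $x$, and satisfies the uniform bound by construction.

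The main obstacle is therefore purely the bookkeeping of simultaneously enforcing three constraints — membership in the closed set $\CS$, $\XX$-measurability of the pieces, and the pointwise norm control — while keeping everything a genuine finite simple function; none of the three is individually hard, but the order matters (construct $\CS$-valued simple approximants first, then truncate). I would present the argument by citing \cite[Corollary~1.1.7]{Hyto16} for the existence of the $\XX$-simple approximating sequence with values in the closed linear span of the essential range intersected with a ball, and then spend the bulk of the proof explaining the adaptation that forces the values into $\CS$ via the dense dictionary $(\mathsf{z}_{\iS})_{\iS\in\NN}\subset\CS$ and the truncation at level $\|x\|_{\HS}^{\mathsf p}+1$. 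No compactness or completeness of $(\upOmega,\FE,\PP)$ beyond what is already assumed is needed; separability of $\HS$ is the only structural input, and it enters exactly through the existence of the countable dense subset of $\CS$.
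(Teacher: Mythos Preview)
Your overall strategy---dense dictionary in $\CS$, nearest-point selection, then enforce the norm bound---is sound, but the truncation step is underspecified in a way that matters. You write ``replace each value $\mathsf{z}$ of $\widetilde{x}_{\nS}$ by itself if $\|\mathsf{z}\|_{\HS}^{\mathsf p}\le\|x\|_{\HS}^{\mathsf p}+1$ and otherwise by the best available dictionary element,'' but the threshold $\|x\|_{\HS}^{\mathsf p}+1$ depends on $\upomega$, not on the value $\mathsf{z}$ alone. So you are not replacing range values of a simple function; you are splitting each level set $[\widetilde{x}_{\nS}=\mathsf{z}_{\iS}]$ by the $\XX$-measurable event $[\|\mathsf{z}_{\iS}\|_{\HS}^{\mathsf p}>\|x\|_{\HS}^{\mathsf p}+1]$ and redefining there. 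For the result to remain simple and to satisfy the bound on that event you must replace by a single fixed element $\mathsf{z}_{0}\in\CS$ with $\|\mathsf{z}_{0}\|_{\HS}^{\mathsf p}\le\|x(\upomega)\|_{\HS}^{\mathsf p}+1$ for every $\upomega$; since $x(\upomega)\in\CS$ $\Pas$, this forces $\mathsf{z}_{0}$ to be a near-infimal-norm point of $\CS$, a choice you never introduce. Once that is added, your argument goes through.

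The paper avoids the two-step construction entirely. It fixes from the outset a dictionary element $\mathsf{z}_{0}\in\CS$ with $\|\mathsf{z}_{0}\|_{\HS}^{\mathsf p}\le\inf_{\mathsf{y}\in\CS}\|\mathsf{y}\|_{\HS}^{\mathsf p}+1$, and then for each $\nS$ and each $\mathsf{y}\in\CS$ restricts the nearest-point search to the index set $\mathsf{I}_{\nS,\mathsf{y}}=\{\iS\le\nS:\|\mathsf{z}_{\iS}\|_{\HS}^{\mathsf p}\le\|\mathsf{y}\|_{\HS}^{\mathsf p}+1\}$, which is always nonempty because $\mathsf{z}_{0}$ qualifies. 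This builds the norm bound into the selection map $\TS_{\nS}$ from the start, and convergence $\TS_{\nS}\mathsf{y}\to\mathsf{y}$ follows because any dictionary element close enough to $\mathsf{y}$ eventually lies in $\mathsf{I}_{\nS,\mathsf{y}}$. Setting $x_{\nS}=\TS_{\nS}x$ and verifying directly that the level sets $[x_{\nS}=\mathsf{z}_{\iS}]$ are in $\XX$ completes the proof. This one-pass construction is cleaner than approximate-then-repair and makes the simplicity of each $x_{\nS}$ evident without the extra partitioning your truncation requires.
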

\begin{proof}
Let $\mathsf{z}\in\CS$ be such that 
$\|\mathsf{z}\|_{\HS}^{\mathsf{p}}
\leq\inf_{\mathsf{y}\in\CS}\|\mathsf{y}\|_{\HS}^{\mathsf{p}}+1$
and let $\{\mathsf{z}_{\nS}\}_{\nnn}$ be a countable dense subset
of $\CS$ with $\mathsf{z}_{\mathsf{0}}=\mathsf{z}$. For every 
$\nnn$ and every $\mathsf{y}\in\CS$, define
$\mathsf{I}_{\nS,\mathsf{y}}=\menge{\iS\in\{0,\ldots,\nS\}}
{\|\mathsf{z}_{\iS}\|_{\HS}^{\mathsf{p}}\leq 
\|\mathsf{y}\|_{\HS}^{\mathsf{p}}+1}$ and let 
$\iS_{\nS,\mathsf{y}}$ be the smallest integer 
$\iS\in\mathsf{I}_{\nS,\mathsf{y}}$ such that 
$\|\mathsf{y}-\mathsf{z}_{\iS}\|_{\HS}
=\min_{\jS\in\mathsf{I}_{\nS,\mathsf{y}}}
\|\mathsf{y}-\mathsf{z}_{\jS}\|_{\HS}$. Define, for every $\nnn$, 
$\TS_{\nS}\colon\CS\to\CS\colon
\mathsf{y}\mapsto\mathsf{z}_{\iS_{\nS,\mathsf{y}}}$. It
follows from the density of $\{\mathsf{z}_{\nS}\}_{\nnn}$ in $\CS$ 
that, for every $\mathsf{y}\in\CS$,
$\TS_{\nS}\mathsf{y}\to\mathsf{y}$ and 
\begin{equation}
(\forall\nnn)\quad\|\TS_{\nS}\mathsf{y}\|_{\HS}^{\mathsf{p}}
\leq\|\mathsf{y}\|_{\HS}^{\mathsf{p}}+1.
\end{equation}
Set, for every $\nnn$, $x_{\nS}=\TS_{\nS}x$. Then
$(x_{\nS})_{\nnn}$ converges strongly $\Pas$ to $x$ and 
\begin{equation}
(\forall\nnn)\quad\|x_{\nS}\|_{\HS}^{\mathsf{p}}
\leq\|x\|_{\HS}^{\mathsf{p}}+1\;\Pas
\end{equation}
It remains to show that $(x_{\nS})_{\nnn}$ is a sequence of 
$\CS$-valued $\XX$-simple mappings. Fix
$\nnn$. Then
\begin{equation}
\label{e:meas0}
[x_{\nS}=\mathsf{z}_{\mathsf{0}}]=\Menge2{\upomega\in\upOmega}
{\|x(\upomega)-\mathsf{z}_{\mathsf{0}}\|_{\HS}
=\min_{\jS\in\mathsf{I}_{\nS,x(\upomega)}}
\|x(\upomega)-\mathsf{z}_{\jS}\|_{\HS}
}
\end{equation}
and, for every $\iS\in\{1,\ldots,\nS\}$,
\begin{equation}
\label{e:meas1}
[x_{\nS}=\mathsf{z}_{\iS}]=\Menge2{\upomega\in\upOmega}
{\iS\in\mathsf{I}_{\nS,x(\upomega)}\;\text{and}\;
\|x(\upomega)-\mathsf{z}_{\iS}\|_{\HS}
=\min_{\jS\in\mathsf{I}_{\nS,x(\upomega)}}
\|x(\upomega)-\mathsf{z}_{\jS}\|_{\HS}
<\min_{\jS\in\mathsf{I}_{\iS-1,x(\upomega)}}
\|x(\upomega)-\mathsf{z}_{\jS}\|_{\HS}
}.
\end{equation}
By construction, \eqref{e:meas0} and \eqref{e:meas1} define sets 
in $\XX$. Further, 
\begin{equation}
\bigcup_{\iS=0}^{\nS}[x_{\nS}=\mathsf{z}_{\iS}]=\upOmega\;\;
\text{and}\;\;
x_{\nS}=\sum_{\iS=0}^{\nS}\mathsf{1}_{[x_{\nS}=\mathsf{z}_{\iS}]}
\mathsf{z}_{\iS},
\end{equation}
which confirms that $x_{\nS}$ is a $\CS$-valued
$\XX$-simple mapping.
\end{proof}

\begin{lemma}
\label{l:4}
Let $\mathfrak{F}=(\FE_{\nS})_{\nnn}$ be a sequence of
sub $\upsigma$-algebras of $\FE$ such that $(\forall\nnn)$
$\FE_{\nS}\subset\FE_{\nS+1}$. Let
$(\alpha_{\nS})_{\nnn}\in\ell_+(\mathfrak{F})$,
$(\theta_{\nS})_{\nnn}\in\ell_+(\mathfrak{F})$, and
$(\eta_{\nS})_{\nnn}\in\ell_+(\mathfrak{F})$. 
Then the following hold:
\begin{enumerate}
\item
\label{l:4i}
Suppose that $(\eta_{\nS})_{\nnn}\in\ell_+^1(\mathfrak{F})$ and
there exists a sequence
$(\chi_{\nS})_{\nnn}\in\ell_+^1(\mathfrak{F})$ satisfying
\begin{equation}
(\forall\nnn)\quad\EC{\alpha_{\nS+1}}{\FE_{\nS}}+\theta_{\nS}
\leq(1+\chi_{\nS})\alpha_{\nS}+\eta_{\nS}\;\Pas
\end{equation}
Then $(\theta_{\nS})_{\nnn}\in\ell_+^1(\mathfrak{F})$ and
$(\alpha_{\nS})_{\nnn}$ converges $\Pas$ to a $\RP$-valued random
variable.
\item
\label{l:4ii}
Suppose that $\EE\alpha_{\mathsf{0}}<\pinf$, 
$\sum_{\nnn}\EE\eta_{\nS}<\pinf$,
and there exists a sequence $(\upchi_{\nS})_{\nnn}$ in $\RP$
satisfying $\varlimsup\upchi_{\nS}<1$ and
\begin{equation}
(\forall\nnn)\quad\EC{\alpha_{\nS+1}}{\FE_{\nS}}+\theta_{\nS}
\leq\upchi_{\nS}\alpha_{\nS}+\eta_{\nS}\;\Pas
\end{equation}
Then $\sum_{\nnn}\EE\theta_{\nS}<\pinf$ and
$\sum_{\nnn}\EE\alpha_{\nS}<\pinf$.
\end{enumerate}
\end{lemma}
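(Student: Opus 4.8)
The plan is to recognize \ref{l:4i} as the Robbins--Siegmund almost-supermartingale lemma and to obtain \ref{l:4ii} by passing to expectations and solving a scalar recursion. For \ref{l:4i} I would argue in two stages. First, I remove the multiplicative perturbation $\chi_{\nS}$. Since $\sum_{\nnn}\chi_{\nS}<\pinf$ $\Pas$, the cumulative products $\uppi_{\nS}=\prod_{\kS=0}^{\nS-1}(1+\chi_{\kS})$, with $\uppi_{\mathsf{0}}=1$, are nondecreasing, $\FE_{\nS}$-measurable, bounded below by $1$, and converge $\Pas$ to a finite limit $\uppi_{\infty}$. Dividing the assumed inequality by the $\FE_{\nS}$-measurable factor $1/\uppi_{\nS+1}\in\left]0,1\right]$ and pulling it through the conditional expectation yields the same inequality for the triple $(\alpha_{\nS}/\uppi_{\nS},\theta_{\nS}/\uppi_{\nS+1},\eta_{\nS}/\uppi_{\nS+1})$ with $\chi_{\nS}\equiv 0$, the modified $\eta$-sequence still being $\Pas$ summable; since $1\leq\uppi_{\nS}\leq\uppi_{\infty}<\pinf$ $\Pas$, multiplying the resulting conclusions back through returns the conclusions for the original sequences, so it suffices to treat the case $\chi_{\nS}\equiv 0$. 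Then, by the tower rule and the hypothesis, $Y_{\nS}=\alpha_{\nS}+\EC{\sum_{\kS\geq\nS}\eta_{\kS}}{\FE_{\nS}}$ is a nonnegative supermartingale with $\EC{Y_{\nS+1}}{\FE_{\nS}}\leq Y_{\nS}-\theta_{\nS}$; the supermartingale convergence theorem gives that $(Y_{\nS})_{\nnn}$, hence $(\alpha_{\nS})_{\nnn}$ because $\EC{\sum_{\kS\geq\nS}\eta_{\kS}}{\FE_{\nS}}\to0$ $\Pas$, converges $\Pas$ to a finite limit, while telescoping the supermartingale inequality yields $\sum_{\nnn}\theta_{\nS}<\pinf$ $\Pas$; as $(\theta_{\nS})_{\nnn}$ is adapted, this gives $(\theta_{\nS})_{\nnn}\in\ell_+^1(\mathfrak{F})$.

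I expect the one genuinely delicate point here to be that $(\chi_{\nS})_{\nnn}$ and $(\eta_{\nS})_{\nnn}$ are assumed summable only $\Pas$, not in expectation, so the $\alpha_{\nS}$ need not be integrable and the conditional expectations above must be read in the generalized $\left[0,\pinf\right]$-valued sense; in particular $Y_{\nS}$ as written need not be finite. To make the supermartingale argument rigorous one runs it on the truncations $\alpha_{\nS}\wedge\mathsf{m}$, or along the stopping times $\tau_{\mathsf{m}}=\inf\{\nS\in\NN:\alpha_{\nS}\geq\mathsf{m}\}$, and then lets $\mathsf{m}\to\pinf$. This localization is routine but must be handled with some care; alternatively one may simply cite the version of the Robbins--Siegmund lemma already recorded in the literature for $\RP$-valued adapted sequences.

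For \ref{l:4ii} I would take $\EE$ in the hypothesis. Using the tower rule and an induction on $\nS$, with $\EE\alpha_{\mathsf{0}}<\pinf$ and each $\EE\eta_{\nS}$ finite (since $\sum_{\nnn}\EE\eta_{\nS}<\pinf$), one gets $\EE\alpha_{\nS}<\pinf$ and $\EE\theta_{\nS}<\pinf$ for every $\nnn$, together with the scalar recursion $\EE\alpha_{\nS+1}+\EE\theta_{\nS}\leq\upchi_{\nS}\EE\alpha_{\nS}+\EE\eta_{\nS}$. Since $\varlimsup\upchi_{\nS}<1$, fix $\mathsf{N}\in\NN$ and $\uprho\in\left]0,1\right[$ with $\upchi_{\nS}\leq\uprho$ for all $\nS\geq\mathsf{N}$; unrolling the recursion from index $\mathsf{N}$ and summing the resulting geometric series bounds $\sum_{\nS\geq\mathsf{N}}\EE\alpha_{\nS}$ above by $(1-\uprho)^{-1}\bigl(\EE\alpha_{\mathsf{N}}+\sum_{\nS\geq\mathsf{N}}\EE\eta_{\nS}\bigr)<\pinf$, whence $\sum_{\nnn}\EE\alpha_{\nS}<\pinf$. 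Finally $\EE\theta_{\nS}\leq\upchi_{\nS}\EE\alpha_{\nS}+\EE\eta_{\nS}$ and $\sup_{\nnn}\upchi_{\nS}<\pinf$, which also follows from $\varlimsup\upchi_{\nS}<1$, give $\sum_{\nnn}\EE\theta_{\nS}<\pinf$. This part is routine; the only thing needing justification is $\EE\EC{\alpha_{\nS+1}}{\FE_{\nS}}=\EE\alpha_{\nS+1}$, which holds for $\RP$-valued random variables.
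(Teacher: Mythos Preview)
Your proposal is correct and aligns with the paper's approach: the paper simply cites \cite[Theorem~1]{Robb71} for \ref{l:4i} and \cite[Lemma~2.1(ii)]{MaPr19} for \ref{l:4ii}, which are exactly the Robbins--Siegmund lemma and the expectation-level scalar recursion you outline. Your sketch supplies the details behind those citations, and your caveat about the $\RP$-valued conditional expectations in \ref{l:4i} is well placed; indeed, citing the literature version (as the paper does) sidesteps that technicality cleanly.
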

\begin{proof}
\ref{l:4i}: This follows from \cite[Theorem~1]{Robb71}.

\ref{l:4ii}: This follows from \cite[Lemma~2.1(ii)]{MaPr19}. 
\end{proof}

\begin{corollary}
\label{l:5}
Let
$(\upalpha_{\nS})_{\nnn}$,
$(\uptheta_{\nS})_{\nnn}$,
$(\upeta_{\nS})_{\nnn}$, and
$(\upchi_{\nS})_{\nnn}$ be sequences in $\RP$.
Then the following hold:
\begin{enumerate}
\item
\label{l:5i}
Suppose that $\sum_{\nnn}\upeta_{\nS}<\pinf$,
$\sum_{\nnn}\upchi_{\nS}<\pinf$, and
\begin{equation}
(\forall\nnn)\quad\upalpha_{\nS+1}+\uptheta_{\nS}
\leq(1+\upchi_{\nS})\upalpha_{\nS}+\upeta_{\nS}.
\end{equation}
Then $\sum_{\nnn}\uptheta_{\nS}<\pinf$ and
$(\upalpha_{\nS})_{\nnn}$ converges to a positive real number.
\item
\label{l:5ii}
Suppose that $\sum_{\nnn}\upeta_{\nS}<\pinf$,
$\varlimsup\upchi_{\nS}<1$, and
\begin{equation}
(\forall\nnn)\quad\upalpha_{\nS+1}+\uptheta_{\nS}
\leq\upchi_{\nS}\upalpha_{\nS}+\upeta_{\nS}\;\Pas
\end{equation}
Then $\sum_{\nnn}\uptheta_{\nS}<\pinf$ and
$\sum_{\nnn}\upalpha_{\nS}<\pinf$.
\end{enumerate}
\end{corollary}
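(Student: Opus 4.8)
The plan is to obtain Corollary~\ref{l:5} as the purely deterministic specialization of Lemma~\ref{l:4}, by invoking that lemma with a constant trivial filtration and viewing the scalar sequences at hand as constant random variables. On the ambient probability space $(\upOmega,\FE,\PP)$, set $\FE_{\nS}=\{\emp,\upOmega\}$ for every $\nnn$, so that $\mathfrak{F}=(\FE_{\nS})_{\nnn}$ is a nondecreasing sequence of sub $\upsigma$-algebras of $\FE$. Each constant map $\upomega\mapsto\upalpha_{\nS}$, and likewise for $\uptheta_{\nS}$, $\upeta_{\nS}$, and $\upchi_{\nS}$, is $\FE_{\nS}$-measurable and $\RP$-valued, hence $(\upalpha_{\nS})_{\nnn}$, $(\uptheta_{\nS})_{\nnn}$, $(\upeta_{\nS})_{\nnn}\in\ell_+(\mathfrak{F})$; for such deterministic nonnegative sequences, membership in $\ell^1_+(\mathfrak{F})$ reduces to ordinary summability, the qualifier $\Pas$ is vacuous, $\EE$ leaves each term unchanged, and $\EC{\Cdot}{\FE_{\nS}}$ acts as the identity on constants.

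For part~\ref{l:5i}, the hypotheses $\sum_{\nnn}\upeta_{\nS}<\pinf$ and $\sum_{\nnn}\upchi_{\nS}<\pinf$ amount to $(\upeta_{\nS})_{\nnn}\in\ell^1_+(\mathfrak{F})$ and $(\upchi_{\nS})_{\nnn}\in\ell^1_+(\mathfrak{F})$, so the latter is admissible as the sequence $(\chi_{\nS})_{\nnn}$ required by Lemma~\ref{l:4}\ref{l:4i}, while the recursion $\upalpha_{\nS+1}+\uptheta_{\nS}\leq(1+\upchi_{\nS})\upalpha_{\nS}+\upeta_{\nS}$ is, after inserting trivial conditional expectations, precisely the inequality $\EC{\alpha_{\nS+1}}{\FE_{\nS}}+\theta_{\nS}\leq(1+\chi_{\nS})\alpha_{\nS}+\eta_{\nS}$ hypothesized there. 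Lemma~\ref{l:4}\ref{l:4i} then delivers $(\uptheta_{\nS})_{\nnn}\in\ell^1_+(\mathfrak{F})$, i.e.\ $\sum_{\nnn}\uptheta_{\nS}<\pinf$, and the $\Pas$ convergence of the constant sequence $(\upalpha_{\nS})_{\nnn}$ to an $\RP$-valued random variable, i.e.\ to a limit in $\RP$, which is the asserted conclusion.

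For part~\ref{l:5ii}, one has $\EE\upalpha_{\mathsf{0}}=\upalpha_{\mathsf{0}}<\pinf$, $\sum_{\nnn}\EE\upeta_{\nS}=\sum_{\nnn}\upeta_{\nS}<\pinf$, the deterministic sequence $(\upchi_{\nS})_{\nnn}$ in $\RP$ satisfies $\varlimsup\upchi_{\nS}<1$, and the recursion coincides with $\EC{\alpha_{\nS+1}}{\FE_{\nS}}+\theta_{\nS}\leq\upchi_{\nS}\alpha_{\nS}+\eta_{\nS}$. Lemma~\ref{l:4}\ref{l:4ii} then yields $\sum_{\nnn}\EE\uptheta_{\nS}=\sum_{\nnn}\uptheta_{\nS}<\pinf$ and $\sum_{\nnn}\EE\upalpha_{\nS}=\sum_{\nnn}\upalpha_{\nS}<\pinf$. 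No genuine obstacle is expected: the entire content is the bookkeeping observation that, for deterministic nonnegative sequences, the $\ell^1_+(\mathfrak{F})$ conditions, the $\Pas$ statements, and the expectations appearing in Lemma~\ref{l:4} collapse to the plain summability and convergence assertions of the corollary; the only points to check — that constants are $\FE_{\nS}$-measurable and that $\EC{\Cdot}{\FE_{\nS}}$ fixes them — are immediate.
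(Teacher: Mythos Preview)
Your proposal is correct and follows exactly the paper's approach: the paper's proof is the single line ``An application of Lemma~\ref{l:4} with $(\forall\nnn)$ $\FE_{\nS}=\{\emp,\upOmega\}$,'' and you have simply spelled out the routine bookkeeping behind that sentence.
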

\begin{proof}
An application of Lemma~\ref{l:4} with
$(\forall\nnn)$ $\FE_{\nS}=\{\emp,\upOmega\}$. 
\end{proof}

\begin{lemma}
\label{l:6}
Let $\xi\in L^1(\upOmega,\FE,\PP;\RR)$, let $\upPhi$ be a family of
random variables, set $\XX=\upsigma(\upPhi)$, and let 
$\eta\in L^1(\upOmega,\FE,\PP;\RR)$ be independent of 
$\upsigma(\{\xi\}\cup\upPhi)$. Then
$\EC{\eta\xi}{\XX}=\EE\eta\EC{\xi}{\XX}$.
\end{lemma}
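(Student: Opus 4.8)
The plan is to verify directly that $(\EE\eta)\EC{\xi}{\XX}$ satisfies the two defining properties of $\EC{\eta\xi}{\XX}$, namely $\XX$-measurability and the averaging identity over sets in $\XX$. First I would record the standard fact that the product of two independent integrable real random variables is integrable, with expectation equal to the product of the expectations. Applying this to $\eta$ and $\xi$, which are independent because $\eta$ is independent of $\upsigma(\{\xi\}\cup\upPhi)\supset\upsigma(\xi)$, shows $\eta\xi\in L^1(\upOmega,\FE,\PP;\RR)$, so that $\EC{\eta\xi}{\XX}$ is well defined. Likewise $(\EE\eta)\EC{\xi}{\XX}$ is $\XX$-measurable since $\EC{\xi}{\XX}$ is.

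Next, fix $\mathsf{A}\in\XX$. The key observation is that, since $\XX=\upsigma(\upPhi)\subset\upsigma(\{\xi\}\cup\upPhi)$ while $\xi$ is trivially $\upsigma(\{\xi\}\cup\upPhi)$-measurable, the random variable $\mathsf{1}_{\mathsf{A}}\xi$ is $\upsigma(\{\xi\}\cup\upPhi)$-measurable, hence independent of $\eta$; it also lies in $L^1$ because $|\mathsf{1}_{\mathsf{A}}\xi|\le|\xi|$. Therefore the product rule for independent integrable variables gives $\EE(\eta\,\mathsf{1}_{\mathsf{A}}\xi)=(\EE\eta)\EE(\mathsf{1}_{\mathsf{A}}\xi)$. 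On the other hand, since $\mathsf{1}_{\mathsf{A}}$ is $\XX$-measurable, the defining property of $\EC{\xi}{\XX}$ yields $\EE(\mathsf{1}_{\mathsf{A}}\xi)=\EE\brk1{\mathsf{1}_{\mathsf{A}}\EC{\xi}{\XX}}$, so that $\EE(\mathsf{1}_{\mathsf{A}}\,\eta\xi)=\EE\brk1{\mathsf{1}_{\mathsf{A}}(\EE\eta)\EC{\xi}{\XX}}$. As $\mathsf{A}\in\XX$ was arbitrary and $(\EE\eta)\EC{\xi}{\XX}$ is $\XX$-measurable, this is precisely the characterization of $\EC{\eta\xi}{\XX}$, whence the claim.

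There is no serious obstacle here; the only point requiring care is the measurability claim that licenses the factorization, namely that for $\mathsf{A}\in\XX$ the variable $\mathsf{1}_{\mathsf{A}}\xi$ belongs to $\upsigma(\{\xi\}\cup\upPhi)$, against which the independence hypothesis on $\eta$ may then be invoked. One could alternatively run a monotone class argument, approximating $\xi$ by $\upsigma(\{\xi\}\cup\upPhi)$-measurable simple functions, but the direct verification above is the shortest route.
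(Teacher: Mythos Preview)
Your proof is correct. The approach differs from the paper's: the paper argues via the tower property, writing
\[
\EC{\eta\xi}{\XX}
=\EC1{\EC{\eta\xi}{\upsigma(\{\xi\}\cup\upPhi)}}{\XX}
=\EC1{\xi\,\EC{\eta}{\upsigma(\{\xi\}\cup\upPhi)}}{\XX}
=\EC{\xi\,\EE\eta}{\XX}
=\EE\eta\,\EC{\xi}{\XX},
\]
i.e., it intercalates the larger $\upsigma$-algebra $\upsigma(\{\xi\}\cup\upPhi)$, takes $\xi$ out as a known factor, and then uses independence of $\eta$ from that $\upsigma$-algebra to replace $\EC{\eta}{\cdot}$ by $\EE\eta$. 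You instead verify the defining property of conditional expectation directly, using independence of $\eta$ from $\mathsf{1}_{\mathsf{A}}\xi$ for $\mathsf{A}\in\XX$. Your route is slightly more self-contained (it does not invoke the ``take out what is known'' rule, whose applicability here also rests on the integrability of $\eta\xi$ that you establish explicitly), while the paper's route is a clean chain of standard conditional-expectation identities; both exploit the same structural point, namely $\XX\subset\upsigma(\{\xi\}\cup\upPhi)$.
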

\begin{proof}
Note that $\XX\subset\upsigma(\{\xi\}\cup\upPhi)$ and
that $\xi$ is $\upsigma(\{\xi\}\cup\upPhi)$-measurable. Hence, it
follows from 
\cite[Properties H$^*$, K$^*$, and J$^*$ in Section~2.7.4]{Shir16}
that
\begin{equation}
\EC{\eta\xi}{\XX}
=\EC2{\EC1{\eta\xi}{\upsigma(\{\xi\}\cup\upPhi)}}{\XX}
=\EC2{\xi\EC1{\eta}{\upsigma(\{\xi\}\cup\upPhi)}}{\XX}
=\EC1{\xi\EE\eta}{\XX}
=\EE\eta\EC{\xi}{\XX},
\end{equation}
which proves the identity.
\end{proof}

\begin{lemma}
\label{l:7}
Let $\boldsymbol{x}=(x_1,\ldots,x_{\mathsf{N}})$ be an
$\HS^\mathsf{N}$-valued random variable,
let $(\mathsf{K},\EuScript{K})$ be a measurable space, and suppose
that the random variable
$k\colon(\upOmega,\FE,\PP)\to(\mathsf{K},\EuScript{K})$ is
independent of $\upsigma(\boldsymbol{x})$. Let 
$\mathsf{f}\colon(\mathsf{K}\times\HS,\EuScript{K}\otimes\BE_{\HS})
\to\RR$ be measurable and such 
that $\EE\abs{\mathsf{f}(k,x_1)}<\pinf$, and define 
$\mathsf{g}\colon\HS\to\RR\colon\xS\mapsto\EE\mathsf{f}(k,\xS)$.
Then, for $\PP$-almost every $\upomega'\in\upOmega$,
\begin{equation}
\label{e:cex}
\EC1{\mathsf{f}(k,x_1)}{\upsigma(\boldsymbol{x})}(\upomega')
=\int_{\upOmega}\mathsf{f}\brk1{k(\upomega),x_1(\upomega')}
\PP(d\upomega)
=\mathsf{g}\brk1{x_1(\upomega')}.
\end{equation}
\end{lemma}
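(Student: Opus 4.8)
The plan is to reduce the claim to the classical substitution formula for conditional expectations under independence, and then to unwind it via a standard approximation-by-simple-functions argument. First I would invoke the disintegration/substitution principle for independence: since $k$ is independent of $\upsigma(\boldsymbol{x})$ and $\mathsf{f}(k,x_1)$ is $\upsigma(k)\vee\upsigma(\boldsymbol{x})$-measurable with $\EE|\mathsf{f}(k,x_1)|<\pinf$, one has, for $\PP$-almost every $\upomega'$,
\begin{equation}
\EC1{\mathsf{f}(k,x_1)}{\upsigma(\boldsymbol{x})}(\upomega')
=\EE\bigl[\mathsf{f}\bigl(k,x_1(\upomega')\bigr)\bigr]
=\int_{\upOmega}\mathsf{f}\bigl(k(\upomega),x_1(\upomega')\bigr)\PP(d\upomega).
\end{equation}
This is the content of, e.g., the substitution rule for conditional expectations (it is the version of Lemma~\ref{l:6} appropriate to a general transformation rather than a product), and once it is in hand the second equality in \eqref{e:cex} is merely the definition of $\mathsf{g}$. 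So the real work is establishing the displayed substitution formula.

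To prove that formula I would proceed by the usual monotone-class / functional-monotone-class scheme. Start with $\mathsf{f}=\mathsf{1}_{\mathsf{A}\times\mathsf{B}}$ for $\mathsf{A}\in\EuScript{K}$ and $\mathsf{B}\in\BE_{\HS}$: then $\mathsf{f}(k,x_1)=\mathsf{1}_{[k\in\mathsf{A}]}\mathsf{1}_{[x_1\in\mathsf{B}]}$, and since $[k\in\mathsf{A}]$ is independent of $\upsigma(\boldsymbol{x})\supset\upsigma(x_1)$ while $[x_1\in\mathsf{B}]\in\upsigma(\boldsymbol{x})$, pulling out what is known gives $\EC1{\mathsf{f}(k,x_1)}{\upsigma(\boldsymbol{x})}=\PP([k\in\mathsf{A}])\mathsf{1}_{[x_1\in\mathsf{B}]}$ a.s., which is precisely $\upomega'\mapsto\int_{\upOmega}\mathsf{1}_{\mathsf{A}}(k(\upomega))\mathsf{1}_{\mathsf{B}}(x_1(\upomega'))\PP(d\upomega)$. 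Since the measurable rectangles form a $\pi$-system generating $\EuScript{K}\otimes\BE_{\HS}$, a Dynkin/$\pi$--$\lambda$ argument extends the identity to $\mathsf{f}=\mathsf{1}_{\mathsf{E}}$ for every $\mathsf{E}\in\EuScript{K}\otimes\BE_{\HS}$, hence by linearity to nonnegative simple $\mathsf{f}$, then by monotone convergence to nonnegative measurable $\mathsf{f}$ (both sides being $[0,\pinf]$-valued, with the right side finite a.e.\ in $\upomega'$ because $\EE|\mathsf{f}(k,x_1)|<\pinf$ forces $\int_{\upOmega}\mathsf{f}(k(\upomega),x_1(\upomega'))\PP(d\upomega)<\pinf$ for a.e.\ $\upomega'$ by Tonelli), and finally to general integrable $\mathsf{f}$ by splitting into positive and negative parts.

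The step I expect to be the main obstacle is the careful handling of the joint measurability and the null sets: one must check that $\upomega\mapsto\mathsf{f}(k(\upomega),x_1(\upomega'))$ is measurable for each fixed $\upomega'$ and that $\upomega'\mapsto\int_{\upOmega}\mathsf{f}(k(\upomega),x_1(\upomega'))\PP(d\upomega)$ is a $\upsigma(\boldsymbol{x})$-measurable (indeed $\upsigma(x_1)$-measurable) version of the conditional expectation — this is where Tonelli's theorem on the product space $(\upOmega\times\upOmega,\FE\otimes\FE,\PP\otimes\PP)$ together with the joint measurability of $(\upomega,\upomega')\mapsto\mathsf{f}(k(\upomega),x_1(\upomega'))$ is used, and one must also verify the defining averaging property $\int_{\mathsf{F}}\mathsf{f}(k,x_1)\,d\PP=\int_{\mathsf{F}}\mathsf{g}(x_1)\,d\PP$ for all $\mathsf{F}\in\upsigma(\boldsymbol{x})$, which again reduces by the same $\pi$-system argument to the rectangle case where it is Fubini combined with independence. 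Alternatively, and more economically, one can cite the substitution lemma directly from a standard reference (the same Section~2.7.4 of \cite{Shir16} already used in Lemma~\ref{l:6}, whose Property on conditional expectations of functions of independent variables gives \eqref{e:cex} at once), so that the proof reduces to quoting that property and identifying the integral with $\mathsf{g}(x_1(\upomega'))$.
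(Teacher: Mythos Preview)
Your proposal is correct, but the paper takes a different and more economical route. Rather than building up from rectangles via a monotone-class argument, the paper verifies the defining averaging property of the conditional expectation in one shot: for an arbitrary $\mathsf{S}=[\boldsymbol{x}\in\mathsf{A}]\in\upsigma(\boldsymbol{x})$, it extends $\mathsf{f}$ trivially to $\boldsymbol{\mathsf{f}}\colon\mathsf{K}\times\HS^{\mathsf{N}}\to\RR$, pushes the integral $\int_{\mathsf{S}}\mathsf{f}(k,x_1)\,d\PP$ forward to $\mathsf{K}\times\HS^{\mathsf{N}}$ via the image measure theorem, uses independence to factor $\PP\circ(k,\boldsymbol{x})^{-1}$ as the product $(\PP\circ k^{-1})\otimes(\PP\circ\boldsymbol{x}^{-1})$, applies Fubini to integrate out the $\mathsf{K}$-coordinate (yielding $\mathsf{g}(\xS_1)$), and then pulls back to $\upOmega$. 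This single chain of equalities works directly for every integrable $\mathsf{f}$ and every $\mathsf{S}\in\upsigma(\boldsymbol{x})$, so no $\pi$--$\lambda$ reduction, no simple-function approximation, and no separate tracking of null sets through limits is needed. Your approach is more elementary in that it avoids explicit use of the image measure theorem, but at the cost of several approximation layers; the paper's approach trades that for one invocation of change of variables plus Fubini and is correspondingly shorter.
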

\begin{proof}
Define 
$\boldsymbol{\mathsf{f}}\colon\mathsf{K}\times\HS^{\mathsf{N}}
\to\RR\colon(\kS,\boldsymbol{\xS})\mapsto\mathsf{f}(\kS,\xS_1)$.
Then $\boldsymbol{\mathsf{f}}$ is an $\RR$-valued measurable
function. Let $\mathsf{S}\in\upsigma(\boldsymbol{x})$. Then there
exists $\mathsf{A}\in\bigotimes_{1\leq\iS\leq\mathsf{N}}\BE_{\HS}$
such that $\mathsf{S}=[\boldsymbol{x}\in\mathsf{A}]$. Thus, it
follows from the image measure theorem
\cite[Theorem~2.6.7]{Shir16}, the independence of $k$ and
$\upsigma(\boldsymbol{x})$, and Fubini's theorem
\cite[Theorem~2.6.8]{Shir16} that
\begin{align}
\int_{\mathsf{S}}\mathsf{f}\brk1{k(\upomega),x_1(\upomega)}
\PP(d\upomega)
&=\int_{\upOmega}\boldsymbol{\mathsf{f}}\brk1{k(\upomega),
\boldsymbol{x}(\upomega)}
\mathsf{1}_{\mathsf{A}}\brk1{\boldsymbol{x}(\upomega)}
\PP(d\upomega)\nonumber\\
&=\int_{\mathsf{K}\times\HS^{\mathsf{N}}}
\boldsymbol{\mathsf{f}}(\kS,\boldsymbol{\xS})
\mathsf{1}_{\mathsf{A}}\brk{\boldsymbol{\xS}}
\brk1{\PP\circ(k,\boldsymbol{x})^{-1}}
(d\kS,d\boldsymbol{\mathsf{\xS}})\nonumber\\
&=\int_{\mathsf{K}\times\HS^{\mathsf{N}}}
\boldsymbol{\mathsf{f}}(\kS,\boldsymbol{\xS})
\mathsf{1}_{\mathsf{A}}\brk{\boldsymbol{\xS}}
\brk1{(\PP\circ k^{-1})\otimes(\PP\circ\boldsymbol{x}^{-1})}
(d\kS,d\boldsymbol{\mathsf{\xS}})\nonumber\\
&=\int_{\HS^{\mathsf{N}}}
\mathsf{1}_{\mathsf{A}}\brk{\boldsymbol{\xS}}
\brk3{\int_{\mathsf{K}}
\mathsf{\boldsymbol{\mathsf{f}}}(\kS,\boldsymbol{\xS})
(\PP\circ k^{-1})(d\kS)}
(\PP\circ\boldsymbol{x}^{-1})(d\boldsymbol{\xS})\nonumber\\
&=\int_{\HS^{\mathsf{N}}}
\mathsf{1}_{\mathsf{A}}\brk{\boldsymbol{\xS}}
\brk3{\int_{\mathsf{K}}
\mathsf{f}(\kS,\xS_1)(\PP\circ k^{-1})(d\kS)}
(\PP\circ\boldsymbol{x}^{-1})(d\boldsymbol{\xS})\nonumber\\
&=\int_{\HS^{\mathsf{N}}}
\mathsf{1}_{\mathsf{A}}\brk{\boldsymbol{\xS}}
\mathsf{g}(\xS_1)
(\PP\circ\boldsymbol{x}^{-1})(d\boldsymbol{\xS})\nonumber\\
&=\int_{\upOmega}
\mathsf{1}_{\mathsf{A}}\brk1{\boldsymbol{x}(\upomega)}
\mathsf{g}\brk1{x_1(\upomega)}\PP(d\upomega)\nonumber\\
&=\int_{\mathsf{S}}
\mathsf{g}\brk1{x_1(\upomega)}\PP(d\upomega).
\end{align} 
Therefore
$\mathsf{g}(x_1)=\EC{\mathsf{f}(k,x_1)}{\upsigma(\boldsymbol{x})}$
\Pas
\end{proof}

\begin{lemma}
\label{l:3}
Let $\mathsf{p}\in\left]1,\pinf\right[$, let 
$(\xi_{\nS})_{\nnn}$ be a sequence in 
$L^\mathsf{p}(\upOmega,\FE,\PP;\RR)$ such that 
$\sup_{\nnn}\EE|\xi_{\nS}|^\mathsf{p}<\pinf$, and let
$\xi\colon\upOmega\to\RR$ be measurable. Suppose that 
$\xi_{\nS}\to\xi~\Pas$ Then $\EE|\xi|<\pinf$, 
$\xi_{\nS}\to\xi$ in $L^1(\upOmega,\FE,\PP;\RR)$, and 
$\EE\xi_{\nS}\to\EE\xi$.
\end{lemma}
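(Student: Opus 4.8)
The plan is to recognize the statement as an instance of the Vitali convergence theorem on the finite measure space $(\upOmega,\FE,\PP)$: a sequence that converges $\Pas$ and is bounded in $L^{\mathsf{p}}$ for some $\mathsf{p}>1$ is uniformly integrable, hence converges in $L^1$, and $L^1$ convergence forces convergence of the expectations.

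First I would check that $\xi\in L^1(\upOmega,\FE,\PP;\RR)$. Since $\xi_{\nS}\to\xi$ $\Pas$, we have $|\xi_{\nS}|^{\mathsf{p}}\to|\xi|^{\mathsf{p}}$ $\Pas$, so Fatou's lemma gives $\EE|\xi|^{\mathsf{p}}\leq\liminf_{\nS}\EE|\xi_{\nS}|^{\mathsf{p}}\leq\sup_{\nnn}\EE|\xi_{\nS}|^{\mathsf{p}}<\pinf$; since $\PP(\upOmega)=1$, Jensen's (equivalently H\"older's) inequality then yields $\EE|\xi|\leq(\EE|\xi|^{\mathsf{p}})^{1/\mathsf{p}}<\pinf$. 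Next I would prove uniform integrability of $(\zeta_{\nS})_{\nnn}$, where $\zeta_{\nS}=|\xi_{\nS}-\xi|$. Setting $\mathsf{M}=\sup_{\nnn}\EE|\xi_{\nS}|^{\mathsf{p}}$, for every $c>0$ and every $\nnn$ one has $|\xi_{\nS}|\leq c^{1-\mathsf{p}}|\xi_{\nS}|^{\mathsf{p}}$ on the event $[|\xi_{\nS}|>c]$, whence $\EE\bigl(|\xi_{\nS}|\mathsf{1}_{[|\xi_{\nS}|>c]}\bigr)\leq c^{1-\mathsf{p}}\mathsf{M}\to0$ as $c\to\pinf$, uniformly in $\nS$ because $\mathsf{p}>1$; thus $(|\xi_{\nS}|)_{\nnn}$ is uniformly integrable, and since $\{|\xi|\}$ is (being a single integrable function) and a finite sum of uniformly integrable families remains so, $(\zeta_{\nS})_{\nnn}$ is uniformly integrable.

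To conclude, $\zeta_{\nS}\to0$ $\Pas$ and hence in probability, so the Vitali convergence theorem \cite{Shir16} gives $\EE\zeta_{\nS}\to0$, i.e.\ $\xi_{\nS}\to\xi$ in $L^1(\upOmega,\FE,\PP;\RR)$, and then $|\EE\xi_{\nS}-\EE\xi|\leq\EE\zeta_{\nS}\to0$. If one prefers not to invoke Vitali, the final limit follows from the truncation estimate $\EE\zeta_{\nS}\leq\EE\min\{\zeta_{\nS},c\}+\EE\bigl(\zeta_{\nS}\mathsf{1}_{[\zeta_{\nS}>c]}\bigr)$, where the first term tends to $0$ by the bounded convergence theorem and the second is made uniformly small in $\nS$ by the uniform integrability, on letting $\nS\to\pinf$ and then $c\to\pinf$. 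There is no real obstacle in the argument: the only points needing care are the Fatou step that upgrades the mere measurability of $\xi$ to integrability, and the fact that the hypothesis $\mathsf{p}>1$ (rather than $\mathsf{p}=1$) is precisely what makes $c^{1-\mathsf{p}}\mathsf{M}$ vanish as $c\to\pinf$.
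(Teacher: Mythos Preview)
Your proof is correct and follows essentially the same route as the paper: establish uniform integrability of $(\xi_{\nS})_{\nnn}$ from the $L^{\mathsf{p}}$ bound with $\mathsf{p}>1$, then invoke the Vitali convergence theorem (the paper cites \cite[Theorem~2.6.4(b)]{Shir16}) to obtain $\xi\in L^1$, $\xi_{\nS}\to\xi$ in $L^1$, and $\EE\xi_{\nS}\to\EE\xi$. The only cosmetic differences are that the paper derives uniform integrability via H\"older plus Markov rather than your direct pointwise estimate $|\xi_{\nS}|\leq c^{1-\mathsf{p}}|\xi_{\nS}|^{\mathsf{p}}$ on $[|\xi_{\nS}|>c]$, and it lets the cited theorem deliver $\xi\in L^1$ instead of isolating that step via Fatou.
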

\begin{proof}
Set $\mathsf{q}=(\mathsf{p}-1)/\mathsf{p}$. 
It follows from the H\"older and Markov inequalities that
\begin{align}
0&\leq
\lim_{\upbeta\to\pinf}\sup_{\nnn}
\int_{[\abs{\xi_{\nS}}\geq\upbeta]}\abs{\xi_{\nS}}d\PP\nonumber\\
&\leq\lim_{\upbeta\to\pinf}\sup_{\nnn}\brk2{
\EE^{1/\mathsf{p}}\abs{\xi_{\nS}}^{\mathsf{p}}
\EE^{1/\mathsf{q}}
\mathsf{1}_{[\abs{\xi_{\nS}}\geq\upbeta]}^{\mathsf{q}}}
\nonumber\\
&\leq\sup_{\nnn}\EE^{1/\mathsf{p}}\abs{\xi_{\nS}}^\mathsf{p}
\lim_{\upbeta\to\pinf}\sup_{\nnn}
\brk2{\PP\brk1{[\abs{\xi_{\nS}}\geq\upbeta]}}^\mathsf{1/\mathsf{q}}
\nonumber\\
&\leq\sup_{\nnn}\EE^{1/\mathsf{p}}\abs{\xi_{\nS}}^\mathsf{p}
\lim_{\upbeta\to\pinf}\sup_{\nnn}
\frac{\EE^{1/\mathsf{q}}\abs{\xi_{\nS}}^{\mathsf{p}}}
{\upbeta^{\mathsf{p}/\mathsf{q}}}\nonumber\\
&=0,
\end{align}
which shows that 
$(\xi_{\nS})_{\nnn}$ is uniformly integrable. We can therefore
invoke \cite[Theorem~2.6.4(b)]{Shir16}, which 
asserts that $\xi\in L^1(\upOmega,\FE,\PP;\RR)$, 
$\EE\xi_{\nS}\to\EE\xi$, and 
$\xi_{\nS}\to\xi$ in $L^1(\upOmega,\FE,\PP;\RR)$.
\end{proof}

\begin{lemma}
\textup{\cite[Proposition~2.6.31]{Hyto16}}
\label{l:2}
Let $x\in L^2(\upOmega,\FE,\PP;\HS)$, let $\XX$ be a sub 
$\upsigma$-algebra of $\FE$, and let 
$y\in L^2(\upOmega,\XX,\PP;\HS)$. Then
$\EC1{\scal{x}{y}_{\HS}}{\XX}
=\scal1{\EC{x}{\XX}}{y}_{\HS}$.
\end{lemma}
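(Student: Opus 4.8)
The plan is to reduce the identity to the case where $y$ is an $\XX$-simple mapping and then to pass to the general case through the approximation provided by Proposition~\ref{p:seq.simple}. Throughout, note that $\EC{x}{\XX}\in L^2(\upOmega,\XX,\PP;\HS)$ and that, by the Cauchy--Schwarz inequality in $L^2(\upOmega,\FE,\PP;\RR)$, the products $\scal{x}{y}_{\HS}$ and $\scal{\EC{x}{\XX}}{y}_{\HS}$ (as well as the analogous products that appear below) belong to $L^1(\upOmega,\FE,\PP;\RR)$, so that every conditional expectation written below is well defined.

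\emph{The case of a simple $y$.} Fix first a vector $\mathsf{z}\in\HS$. The random variable $\scal{\EC{x}{\XX}}{\mathsf{z}}_{\HS}$ is $\XX$-measurable and integrable, and for every $\AS\in\XX$ the commutation of the Bochner integral with the bounded linear functional $\scal{\cdot}{\mathsf{z}}_{\HS}$, together with the defining identity $\int_{\AS}\EC{x}{\XX}\,d\PP=\int_{\AS}x\,d\PP$ of the conditional expectation, yields $\int_{\AS}\scal{\EC{x}{\XX}}{\mathsf{z}}_{\HS}\,d\PP=\int_{\AS}\scal{x}{\mathsf{z}}_{\HS}\,d\PP$; hence $\EC{\scal{x}{\mathsf{z}}_{\HS}}{\XX}=\scal{\EC{x}{\XX}}{\mathsf{z}}_{\HS}\;\Pas$ Now let $y=\sum_{\iS=1}^{\mathsf{N}}\mathsf{1}_{\mathsf{F}_{\iS}}\mathsf{z}_{\iS}$, where $(\mathsf{F}_{\iS})_{1\leq\iS\leq\mathsf{N}}$ are disjoint sets in $\XX$ and $(\mathsf{z}_{\iS})_{1\leq\iS\leq\mathsf{N}}$ lie in $\HS$. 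Then bilinearity of the scalar product gives $\scal{x}{y}_{\HS}=\sum_{\iS=1}^{\mathsf{N}}\mathsf{1}_{\mathsf{F}_{\iS}}\scal{x}{\mathsf{z}_{\iS}}_{\HS}$, and since each $\mathsf{1}_{\mathsf{F}_{\iS}}$ is bounded and $\XX$-measurable, the standard scalar pull-out property of conditional expectation combined with the identity just obtained produces $\EC{\scal{x}{y}_{\HS}}{\XX}=\sum_{\iS=1}^{\mathsf{N}}\mathsf{1}_{\mathsf{F}_{\iS}}\scal{\EC{x}{\XX}}{\mathsf{z}_{\iS}}_{\HS}=\scal{\EC{x}{\XX}}{y}_{\HS}\;\Pas$

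\emph{The general case.} Apply Proposition~\ref{p:seq.simple} with $\CS=\HS$ and $\mathsf{p}=2$ to obtain a sequence $(y_{\nS})_{\nnn}$ of $\HS$-valued $\XX$-simple mappings with $y_{\nS}\to y$ $\Pas$ and $\sup_{\nnn}\|y_{\nS}\|_{\HS}^{2}\leq\|y\|_{\HS}^{2}+1\;\Pas$ By the previous case, $\EC{\scal{x}{y_{\nS}}_{\HS}}{\XX}=\scal{\EC{x}{\XX}}{y_{\nS}}_{\HS}$ for every $\nnn$. On the right-hand side, continuity of the scalar product gives $\scal{\EC{x}{\XX}}{y_{\nS}}_{\HS}\to\scal{\EC{x}{\XX}}{y}_{\HS}\;\Pas$ On the left-hand side, $\scal{x}{y_{\nS}}_{\HS}\to\scal{x}{y}_{\HS}$ holds $\Pas$ and the Cauchy--Schwarz inequality provides the domination $\abs{\scal{x}{y_{\nS}}_{\HS}}\leq\|x\|_{\HS}\|y_{\nS}\|_{\HS}\leq\|x\|_{\HS}\bigl(\|y\|_{\HS}+1\bigr)$, whose right-hand side lies in $L^{1}(\upOmega,\FE,\PP;\RR)$; by dominated convergence, $\scal{x}{y_{\nS}}_{\HS}\to\scal{x}{y}_{\HS}$ in $L^{1}(\upOmega,\FE,\PP;\RR)$, and since conditional expectation is an $L^{1}$-contraction, $\EC{\scal{x}{y_{\nS}}_{\HS}}{\XX}\to\EC{\scal{x}{y}_{\HS}}{\XX}$ in $L^{1}(\upOmega,\FE,\PP;\RR)$, so that some subsequence converges $\Pas$ Passing to the limit along that subsequence in the identity of the previous case gives $\EC{\scal{x}{y}_{\HS}}{\XX}=\scal{\EC{x}{\XX}}{y}_{\HS}\;\Pas$

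I do not anticipate a real obstacle: the two points that deserve care are the integrability checks ensuring that all conditional expectations are meaningful (settled by Cauchy--Schwarz in $L^{2}(\upOmega,\FE,\PP;\RR)$), and the reconciliation of $\Pas$ convergence on one side of the identity with the $L^{1}$ convergence of conditional expectations on the other, which is why a subsequence is extracted in the last step. A shorter route, if one is willing to use the characterization of $\EC{\cdot}{\XX}$ on $L^{2}(\upOmega,\FE,\PP;\HS)$ as the orthogonal projection onto $L^{2}(\upOmega,\XX,\PP;\HS)$, is to observe that $\mathsf{1}_{\AS}y\in L^{2}(\upOmega,\XX,\PP;\HS)$ for every $\AS\in\XX$, so that orthogonality of $x-\EC{x}{\XX}$ to $\mathsf{1}_{\AS}y$ reads $\EE\bigl(\mathsf{1}_{\AS}\scal{x}{y}_{\HS}\bigr)=\EE\bigl(\mathsf{1}_{\AS}\scal{\EC{x}{\XX}}{y}_{\HS}\bigr)$, which, as $\scal{\EC{x}{\XX}}{y}_{\HS}$ is $\XX$-measurable and integrable, is exactly the claimed identity; I would nonetheless follow the first route, as it does not presuppose that characterization.
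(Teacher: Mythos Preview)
The paper does not prove this lemma: it merely cites \cite[Proposition~2.6.31]{Hyto16} and states the result without argument. Your proof is correct and self-contained; the reduction to $\XX$-simple $y$ followed by the approximation of Proposition~\ref{p:seq.simple} and a dominated-convergence passage to the limit is a clean standard route. One cosmetic point: from $\sup_{\nnn}\|y_{\nS}\|_{\HS}^{2}\leq\|y\|_{\HS}^{2}+1$ you obtain $\|y_{\nS}\|_{\HS}\leq\sqrt{\|y\|_{\HS}^{2}+1}\leq\|y\|_{\HS}+1$, so the dominating bound you wrote is valid, but you might state the intermediate inequality explicitly. The orthogonal-projection alternative you sketch at the end is also sound and is essentially how \cite{Hyto16} argues.
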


\begin{lemma}
\label{l:1}
Let $\CS$ be a nonempty closed subset of $\HS$ and let
$(x_{\nS})_{\nnn}$ be a sequence of $\HS$-valued random
variables. Define 
\begin{equation}
\mathfrak{X}=(\XX_{\nS})_{\nnn},\;\;\text{where}\;\;
(\forall\nnn)\;\;\XX_{\nS}=\upsigma(x_{\mathsf{0}},\dots,x_{\nS}).
\end{equation}
Let $\mathsf{p}\in\left[1,\pinf\right[$ and suppose that, for every
$\mathsf{z}\in\CS$, there exist 
$(\mu_{\nS}(\mathsf{z}))_{\nnn}\in\ell^1_+(\mathfrak{X})$, 
$(\theta_{\nS}(\mathsf{z}))_{\nnn}\in\ell_+(\mathfrak{X})$, and 
$(\nu_{\nS}(\mathsf{z}))_{\nnn}\in\ell^1_+(\mathfrak{X})$ 
such that 
\begin{equation}
\label{e:l1}
(\forall\nnn)\quad
\EC1{\|x_{\nS+1}-\mathsf{z}\|_{\HS}^{\mathsf{p}}}{\XX_{\nS}}
+\theta_{\nS}(\mathsf{z}) \leq\big(1+
\mu_{\nS}(\mathsf{z})\big)
\|x_{\nS}-\mathsf{z}\|_{\HS}^{\mathsf{p}}+
\nu_{\nS}(\mathsf{z})\;\;\Pas
\end{equation}
Then the following hold:
\begin{enumerate}
\item
\label{l:1i}
Let $\mathsf{z}\in\CS$. Then 
$\sum_{\nnn}\theta_{\nS}(\mathsf{z})<\pinf\;\Pas$
\item
\label{l:1ii}
$(\|x_{\nS}\|_{\HS})_{\nnn}$ is bounded $\Pas$
\item
\label{l:1ii2} 
$\mathfrak{W}(x_{\nS})_{\nnn}\neq\varnothing\;\Pas$
\item
\label{l:1iii}
There exists $\upOmega'\in\FE$ such that
$\PP(\upOmega')=1$ and, for every $\upomega\in
\upOmega'$ and every $\mathsf{z}\in\CS$, 
$(\|x_{\nS}(\upomega)-\mathsf{z}\|_{\HS})_{\nnn}$ converges.
\item
\label{l:1iv}
Suppose that $\mathfrak{W}(x_{\nS})_{\nnn}\subset\CS\;\Pas$ 
Then $(x_{\nS})_{\nnn}$ converges weakly $\Pas$ to a 
$\CS$-valued random variable.
\item
\label{l:1v}
Suppose that $\mathfrak{S}(x_{\nS})_{\nnn}\cap\CS\neq
\varnothing\;\Pas$ Then $(x_{\nS})_{\nnn}$ converges strongly
$\Pas$ to a $\CS$-valued random variable.
\item
\label{l:1vi}
Suppose that $\mathfrak{S}(x_{\nS})_{\nnn}\neq\emp\;\Pas$ and
that $\mathfrak{W}(x_{\nS})_{\nnn}\subset\CS\;\Pas$ Then 
$(x_{\nS})_{\nnn}$ converges strongly $\Pas$ to an 
$\CS$-valued random variable.
\item
\label{l:1vii}
Suppose that $\mathsf{z}\in\CS$ and $(\upchi_{\nS})_{\nnn}$ 
in $\RP$ satisfy
\begin{equation}
\label{e:l2}
(\forall\nnn)\quad
\EC1{\|x_{\nS+1}-\mathsf{z}\|_{\HS}^{\mathsf{p}}}{\XX_{\nS}}
\leq
\upchi_{\nS}\|x_{\nS}-\mathsf{z}\|_{\HS}^{\mathsf{p}}
\;\Pas\;\;\text{and}\;\;\varlimsup\upchi_{\nS}<1.
\end{equation}
Then the following hold:
\begin{enumerate}
\item
Let $\nnn$. Then
$\EC{\|x_{\nS+1}-\mathsf{z}\|_{\HS}^{\mathsf{p}}}{\XX_{\mathsf{0}}}
\leq(\prod_{\jS=0}^{\nS}
\upchi_{\jS})\|x_{\mathsf{0}}-\mathsf{z}\|_{\HS}^{\mathsf{p}}\;
\Pas$
\item
Suppose that 
$x_{\mathsf{0}}\in L^{\mathsf{p}}(\upOmega,\FE,\PP;\HS)$. 
Then $(x_{\nS})_{\nnn}$ converges strongly in 
$L^{\mathsf{p}}(\upOmega,\FE,\PP;\HS)$ and $\Pas$ to 
$\mathsf{z}$.
\end{enumerate}
\end{enumerate}
\end{lemma}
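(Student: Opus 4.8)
The plan is to rest the whole statement on a single application of the Robbins--Siegmund-type Lemma~\ref{l:4}\ref{l:4i}, which manufactures one almost sure event on which every remaining step reduces to a deterministic fact about the Hilbert space $\HS$, proved $\upomega$ by $\upomega$. To prove \ref{l:1i}, I would fix $\mathsf{z}\in\CS$ and invoke Lemma~\ref{l:4}\ref{l:4i} along the filtration $\mathfrak{X}$ with $\alpha_{\nS}=\|x_{\nS}-\mathsf{z}\|_{\HS}^{\mathsf{p}}$ (which belongs to $\ell_+(\mathfrak{X})$ because $x_{\nS}$ is $\XX_{\nS}$-measurable), $\theta_{\nS}=\theta_{\nS}(\mathsf{z})$, $\eta_{\nS}=\nu_{\nS}(\mathsf{z})$, and $\chi_{\nS}=\mu_{\nS}(\mathsf{z})$; the hypothesis \eqref{e:l1} is precisely what that lemma requires. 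It then yields at once $\sum_{\nnn}\theta_{\nS}(\mathsf{z})<\pinf$ \Pas, which is \ref{l:1i}, together with the almost sure convergence of $(\|x_{\nS}-\mathsf{z}\|_{\HS})_{\nnn}$ to a finite limit, which is the seed for everything that follows.

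The one genuinely delicate point, and the step I expect to be the main obstacle, is \ref{l:1iii}: since $\CS$ may be uncountable, the $\mathsf{z}$-dependent exceptional null sets furnished by \ref{l:1i} cannot be discarded simultaneously by a bare union. I would circumvent this using separability of $\HS$, hence of $\CS$: fix a countable dense subset $\{\mathsf{z}_{\jS}\}_{\jjj}$ of $\CS$, let $\upOmega'$ be the intersection of the countably many probability-one events on which $(\|x_{\nS}-\mathsf{z}_{\jS}\|_{\HS})_{\nnn}$ converges, and then, on $\upOmega'$, use the $1$-Lipschitz estimate $\bigl|\,\|x_{\nS}(\upomega)-\mathsf{z}\|_{\HS}-\|x_{\nS}(\upomega)-\mathsf{z}_{\jS}\|_{\HS}\,\bigr|\le\|\mathsf{z}-\mathsf{z}_{\jS}\|_{\HS}$ to carry the Cauchy property from the dense points to every $\mathsf{z}\in\CS$; this gives \ref{l:1iii}. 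On $\upOmega'$ the two structural facts \ref{l:1ii} and \ref{l:1ii2} are then immediate: fixing any single $\mathsf{z}\in\CS$ and using the triangle inequality bounds $(\|x_{\nS}(\upomega)\|_{\HS})_{\nnn}$, and weak sequential compactness of the closed balls of $\HS$ shows that $\WC(x_{\nS}(\upomega))_{\nnn}\neq\emp$.

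The convergence assertions \ref{l:1iv}--\ref{l:1vi} are then pointwise, deterministic cluster-point arguments on the relevant probability-one event. For \ref{l:1iv} I would run Opial's argument: on $\{\WC(x_{\nS})_{\nnn}\subset\CS\}\cap\upOmega'$, any two weak cluster points $\mathsf{u},\mathsf{v}$ of $(x_{\nS}(\upomega))_{\nnn}$ lie in $\CS$, so both $(\|x_{\nS}(\upomega)-\mathsf{u}\|_{\HS})_{\nnn}$ and $(\|x_{\nS}(\upomega)-\mathsf{v}\|_{\HS})_{\nnn}$ converge by \ref{l:1iii}; from the identity $\|x_{\nS}(\upomega)-\mathsf{u}\|_{\HS}^{2}=\|x_{\nS}(\upomega)-\mathsf{v}\|_{\HS}^{2}+2\scal{x_{\nS}(\upomega)-\mathsf{v}}{\mathsf{v}-\mathsf{u}}_{\HS}+\|\mathsf{v}-\mathsf{u}\|_{\HS}^{2}$, passing to the limit along a subsequence converging weakly to $\mathsf{v}$, then repeating with $\mathsf{u}$ and $\mathsf{v}$ interchanged, and adding, one forces $\mathsf{u}=\mathsf{v}$; thus the bounded sequence has a unique weak cluster point and converges weakly to a $\CS$-valued limit, which is an $\HS$-valued random variable by the usual coordinatewise-limit observation. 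For \ref{l:1v}, on $\{\SC(x_{\nS})_{\nnn}\cap\CS\neq\emp\}\cap\upOmega'$, any point $\mathsf{z}^{\ast}$ in that intersection makes $(\|x_{\nS}(\upomega)-\mathsf{z}^{\ast}\|_{\HS})_{\nnn}$ a convergent sequence admitting $0$ as a subsequential limit, hence convergent to $0$, so $x_{\nS}(\upomega)\to\mathsf{z}^{\ast}$. And \ref{l:1vi} is immediate from \ref{l:1v}, since strong cluster points are weak cluster points, whence the two hypotheses of \ref{l:1vi} give $\SC(x_{\nS})_{\nnn}\cap\CS=\SC(x_{\nS})_{\nnn}\neq\emp$ \Pas.

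Finally, for \ref{l:1vii}, part~(a) is obtained by iterating \eqref{e:l2} with the tower rule, using $\XX_{\mathsf{0}}\subset\XX_{\nS}$ and the fact that each $\upchi_{\nS}$ is deterministic (so it pulls out of the conditional expectation), the induction being started from $\EC{\|x_{\mathsf{0}}-\mathsf{z}\|_{\HS}^{\mathsf{p}}}{\XX_{\mathsf{0}}}=\|x_{\mathsf{0}}-\mathsf{z}\|_{\HS}^{\mathsf{p}}$. For part~(b), with $x_{\mathsf{0}}\in L^{\mathsf{p}}(\upOmega,\FE,\PP;\HS)$, I would take expectations in~(a): since $\varlimsup\upchi_{\nS}<1$, the products $\prod_{\jS=0}^{\nS}\upchi_{\jS}$ decay geometrically, so $(x_{\nS})_{\nnn}$ converges to $\mathsf{z}$ in $L^{\mathsf{p}}(\upOmega,\FE,\PP;\HS)$; moreover Lemma~\ref{l:4}\ref{l:4ii}, applied with $\FE_{\nS}=\XX_{\nS}$, $\alpha_{\nS}=\|x_{\nS}-\mathsf{z}\|_{\HS}^{\mathsf{p}}$, and $\theta_{\nS}=\eta_{\nS}=0$, gives $\sum_{\nnn}\EE\|x_{\nS}-\mathsf{z}\|_{\HS}^{\mathsf{p}}<\pinf$, hence $\sum_{\nnn}\|x_{\nS}-\mathsf{z}\|_{\HS}^{\mathsf{p}}<\pinf$ almost surely and therefore $x_{\nS}\to\mathsf{z}$ almost surely. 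The only routine matters left in this scheme are the measurability of the weak and strong pointwise limits and the checking that the $\ell_+(\mathfrak{X})$- and $\ell^1_+(\mathfrak{X})$-memberships and the conditional-expectation manipulations are legitimate for the possibly non-integrable nonnegative variables at hand.
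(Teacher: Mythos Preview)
Your proposal is correct and follows the standard Fej\'er-monotone machinery. The paper's own proof is in fact a two-line citation: items \ref{l:1i}--\ref{l:1vi} are obtained by invoking \cite[Proposition~2.3]{Siop15} with $\phi=|\cdot|^{\mathsf{p}}$, and item \ref{l:1vii} by invoking \cite[Lemma~2.2]{MaPr19} with the same $\phi$. What you have written is essentially a faithful reconstruction of the arguments contained in those references: the Robbins--Siegmund lemma for \ref{l:1i}, the countable-dense-subset trick for the uniform exceptional set in \ref{l:1iii}, Opial's identity for \ref{l:1iv}, and the tower-rule iteration plus Lemma~\ref{l:4}\ref{l:4ii} for \ref{l:1vii}. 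So there is no genuine methodological difference---you have unpacked what the paper black-boxes.

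One point where the paper is slightly more explicit than you: for the measurability of the weak limit in \ref{l:1iv}, the paper flags that \cite[Proposition~2.3(iv)]{Siop15} relies on Pettis' measurability theorem \cite[Corollary~1.13]{Pett38}, whose applicability requires both the separability of $\HS$ and the completeness of $(\upOmega,\FE,\PP)$. Your ``usual coordinatewise-limit observation'' is morally this, but if you were to write a self-contained proof you would want to name the completeness hypothesis, since the pointwise weak limit is a priori only defined on a full-measure set and must be extended measurably to all of $\upOmega$.
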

\begin{proof}
\ref{l:1i}-\ref{l:1vi}: Apply \cite[Proposition~2.3]{Siop15} with 
$\phi=|\cdot|^\mathsf{p}$. The measurability of the weak limit in
\ref{l:1iv} relies on \cite[Proposition~2.3(iv)]{Siop15} which
invokes \cite[Corollary~1.13]{Pett38}. The applicability 
of the latter follows from the separability of $\HS$ and the
completeness of $(\upOmega,\FE,\PP)$; see
\cite[Sections~1.1a--b]{Hyto16} for details. 

\ref{l:1vii}: Apply
\cite[Lemma~2.2]{MaPr19} with $\phi=|\cdot|^{\mathsf{p}}$.
\end{proof}

\section{Main results}
\label{sec:3}

\subsection{An abstract stochastic algorithm}
\label{sec:31}

The analysis of the asymptotic behavior of the following
algorithm will serve as the backbone of subsequent convergence
results. We recall from Section~\ref{sec:1} that $\ZS$ is the
solution set of the problem under consideration and that it is
assumed to be nonempty and closed. 

\begin{algorithm}
\label{algo:3}
Let $x_{\mathsf{0}}\in L^2(\upOmega,\FE,\PP;\HS)$. Iterate
\begin{equation}
\label{e:a1}
\begin{array}{l}
\text{for}\;\nS=0,1,\ldots\\
\left\lfloor
\begin{array}{l}
\XX_{\nS}=\upsigma(x_{\mathsf{0}},\dots,x_{\nS})\\
\text{take}\;
\lambda_{\nS}\in L^\infty(\upOmega,\FE,\PP;\RPP),\;
d_{\nS}\in L^2(\upOmega,\FE,\PP;\HS),\;\text{and}\;
\delta_{\nS}\in\mathfrak{C}(\upOmega,\FE,\PP;\HS)\;\text{such
that}\\
\qquad\begin{cases}
\EC1{\lambda_{\nS}(2-\lambda_{\nS})\|d_{\nS}\|_{\HS}^2}
{\XX_{\nS}}\geq 0\;\Pas;\\
(\forall\mathsf{z}\in\ZS)\;\;
\EC1{\lambda_{\nS}
\scal{\mathsf{z}+d_{\nS}-x_{\nS}}{d_{\nS}}_{\HS}}{\XX_{\nS}}
\leq\delta_{\nS}(\cdot,\mathsf{z})/2\;\Pas\\
\end{cases}\\
x_{\nS+1}=x_{\nS}-\lambda_{\nS} d_{\nS}.
\end{array}
\right.\\
\end{array}
\end{equation}
\end{algorithm}

Let us outline the weak and strong convergence properties of
Algorithm~\ref{algo:3}.

\begin{theorem}
\label{t:3}
Let $(x_{\nS})_{\nnn}$ be the sequence generated by
Algorithm~\ref{algo:3}. Then the following hold:
\begin{enumerate}
\item
\label{t:3i-}
$(x_{\nS})_{\nnn}$ is a well-defined sequence in 
$L^2(\upOmega,\FE,\PP;\HS)$.
\item
\label{t:3i}
Let $\nnn$ and $\mathsf{z}\in\ZS$. Then
\begin{equation}
\nonumber
\EC1{\|x_{\nS+1}-\mathsf{z}\|_{\HS}^2}{\XX_{\nS}}
\leq\|x_{\nS}-\mathsf{z}\|_{\HS}^2
-\EC1{\lambda_{\nS}(2-\lambda_{\nS})\|d_{\nS}\|_{\HS}^2}{\XX_{\nS}}
+\delta_{\nS}(\cdot,\mathsf{z})\;\;\Pas
\end{equation}
\item
\label{t:3ii}
Let $\nnn$ and $z\in L^2(\upOmega,\XX_{\nS},\PP;\ZS)$. Then
\begin{equation}
\nonumber
\EC1{\|x_{\nS+1}-z\|_{\HS}^2}{\XX_{\nS}}
\leq\|x_{\nS}-z\|_{\HS}^2
-\EC1{\lambda_{\nS}(2-\lambda_{\nS})\|d_{\nS}\|_{\HS}^2}{\XX_{\nS}}
+\delta_{\nS}(\cdot,z)\;\;\Pas
\end{equation}
\item
\label{t:3v}
Let $\nnn$ and $z\in L^2(\upOmega,\XX_{\nS},\PP;\ZS)$. Then
\begin{equation}
\nonumber
\|x_{\nS+1}-z\|_{L^2(\upOmega,\FE,\PP;\HS)}^2
\leq\|x_{\nS}-z\|_{L^2(\upOmega,\FE,\PP;\HS)}^2
-\EE\brk1{\lambda_{\nS}(2-\lambda_{\nS})\|d_{\nS}\|_{\HS}^2}
+\EE\delta_{\nS}(\cdot,z).
\end{equation}
\item
Suppose that, for every $\mathsf{z}\in\ZS$, 
$\sum_{\nnn}\delta_{\nS}(\cdot,\mathsf{z})<\pinf\;\Pas$ 
Then the following hold:
\begin{enumerate}
\item
\label{t:3iva}
$(\|x_{\nS}\|_{\HS})_{\nnn}$ is bounded $\Pas$
\item
\label{t:3ivb}
Let $z$ be a $\ZS$-valued random variable. Then
$(\|x_{\nS}-z\|_{\HS})_{\nnn}$ converges $\Pas$
\item
\label{t:3ivd}
$\sum_{\nnn}\EC{\lambda_{\nS}(2-\lambda_{\nS})
\|d_{\nS}\|_{\HS}^2}{\XX_{\nS}}<\pinf\;\Pas$
\item
\label{t:3ivf}
Suppose that $\mathfrak{W}(x_{\nS})_{\nnn}\subset\ZS\;\Pas$ 
Then $(x_{\nS})_{\nnn}$ converges weakly $\Pas$ to a 
$\ZS$-valued random variable.
\item
\label{t:3ivg}
Suppose that 
$\mathfrak{S}(x_{\nS})_{\nnn}\cap\ZS\neq\emp\;\Pas$ 
Then $(x_{\nS})_{\nnn}$ converges strongly $\Pas$ to a 
$\ZS$-valued random variable.
\item
\label{t:3ivh}
Suppose that $\mathfrak{S}(x_{\nS})_{\nnn}\neq\emp\;\Pas$
and that $\mathfrak{W}(x_{\nS})_{\nnn}\subset\ZS\;\Pas$ Then 
$(x_{\nS})_{\nnn}$ converges strongly $\Pas$ to a $\ZS$-valued 
random variable.
\end{enumerate}
\item
Suppose that, for every 
$\zS\in\ZS$,
$\sum_{\nnn}\EE\delta_{\nS}(\cdot,\zS)<\pinf$.
Then the following hold:
\begin{enumerate}
\item
\label{t:3viia}
$(\|x_{\nS}\|_{L^2(\upOmega,\FE,\PP;\HS)})_{\nnn}$ is bounded.
\item
\label{t:3csii}
Let $z\in L^2(\upOmega,\FE,\PP;\ZS)$. Then 
$(\|x_{\nS}-z\|_{L^1(\upOmega,\FE,\PP;\HS)})_{\nnn}$ converges.
\item
\label{t:3viic}
$\sum_{\nnn}\EE\brk{\lambda_{\nS}\left(2-\lambda_{\nS}\right)
{\|d_{\nS}\|_{\HS}^2}}<\pinf$. 
\item
\label{t:3weak}
Suppose that $(x_{\nS})_{\nnn}$ converges weakly $\Pas$ to an
$\HS$-valued random variable $x$. Then 
$x\in L^2(\upOmega,\FE,\PP;\HS)$ and $(x_{\nS})_{\nnn}$ converges
weakly in $L^2(\upOmega,\FE,\PP;\HS)$ to $x$.
\item
\label{t:3viig}
Let $x$ be a $\ZS$-valued random variable. Then
$(x_{\nS})_{\nnn}$ converges strongly $\Pas$ to $x$ if and only if 
$(x_{\nS})_{\nnn}$ converges strongly in 
$L^1(\upOmega,\FE,\PP;\HS)$ to $x$. In this case, 
$x\in L^2(\upOmega,\FE,\PP;\ZS)$ and
$(x_{\nS})_{\nnn}$ converges weakly in $L^2(\upOmega,\FE,\PP;\HS)$ 
to $x$.
\end{enumerate}
\end{enumerate}
\end{theorem}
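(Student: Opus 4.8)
\emph{Fundamental inequality and random base points (items~\ref{t:3i-}--\ref{t:3v}).} Since $\lambda_{\nS}\in L^\infty(\upOmega,\FE,\PP;\RPP)$ and $d_{\nS}\in L^2(\upOmega,\FE,\PP;\HS)$, we have $\lambda_{\nS}d_{\nS}\in L^2(\upOmega,\FE,\PP;\HS)$, so an induction gives \ref{t:3i-}. For \ref{t:3i}, fix $\mathsf{z}\in\ZS$; from $x_{\nS+1}=x_{\nS}-\lambda_{\nS}d_{\nS}$ one has, $\Pas$,
\begin{equation*}
\|x_{\nS+1}-\mathsf{z}\|_{\HS}^2=\|x_{\nS}-\mathsf{z}\|_{\HS}^2
-\lambda_{\nS}(2-\lambda_{\nS})\|d_{\nS}\|_{\HS}^2
+2\lambda_{\nS}\scal{\mathsf{z}+d_{\nS}-x_{\nS}}{d_{\nS}}_{\HS},
\end{equation*}
and all three terms on the right lie in $L^1$ (by $\lambda_{\nS}\in L^\infty$ and $x_{\nS},d_{\nS}\in L^2$), so one may apply $\EC{\cdot}{\XX_{\nS}}$: the first term is $\XX_{\nS}$-measurable, while the second condition of Algorithm~\ref{algo:3} bounds the conditional expectation of the last term by $\delta_{\nS}(\cdot,\mathsf{z})$; this is \ref{t:3i}. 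To obtain \ref{t:3ii} I extend that second condition to $z\in L^2(\upOmega,\XX_{\nS},\PP;\ZS)$: for a $\ZS$-valued $\XX_{\nS}$-simple mapping $z=\sum_{\iS}\mathsf{1}_{\mathsf{F}_{\iS}}\mathsf{z}_{\iS}$ one multiplies the inequality for $\mathsf{z}_{\iS}$ by the $\XX_{\nS}$-measurable factor $\mathsf{1}_{\mathsf{F}_{\iS}}$, brings it inside $\EC{\cdot}{\XX_{\nS}}$, and sums, using $\delta_{\nS}(\cdot,z)=\sum_{\iS}\mathsf{1}_{\mathsf{F}_{\iS}}\delta_{\nS}(\cdot,\mathsf{z}_{\iS})$; for general $z$ one takes the simple approximants $z_{\jS}\to z\;\Pas$ with $\sup_{\jS}\|z_{\jS}\|_{\HS}^2\le\|z\|_{\HS}^2+1$ furnished by Proposition~\ref{p:seq.simple}, dominates $\lambda_{\nS}\scal{z_{\jS}+d_{\nS}-x_{\nS}}{d_{\nS}}_{\HS}$ by the fixed $L^1$ function $\|\lambda_{\nS}\|_{L^\infty}\brk1{(\|z\|_{\HS}^2+1)^{1/2}+\|x_{\nS}\|_{\HS}+\|d_{\nS}\|_{\HS}}\|d_{\nS}\|_{\HS}$, and passes to the limit by conditional dominated convergence and the $\Pas$ continuity of $\delta_{\nS}(\upomega,\cdot)$. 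Re-running the displayed expansion with this $z$ (legitimate since $\|x_{\nS}-z\|_{\HS}^2$ is now $\XX_{\nS}$-measurable) gives \ref{t:3ii}, and taking $\EE$ with the tower rule gives \ref{t:3v}.

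\emph{Almost sure asymptotics.} Assume $\sum_{\nnn}\delta_{\nS}(\cdot,\mathsf{z})<\pinf\;\Pas$ for every $\mathsf{z}\in\ZS$, and fix $\mathsf{z}\in\ZS$. With $\theta_{\nS}=\EC{\lambda_{\nS}(2-\lambda_{\nS})\|d_{\nS}\|_{\HS}^2}{\XX_{\nS}}$, an $\XX_{\nS}$-measurable $\RP$-valued variable by the first condition of Algorithm~\ref{algo:3}, the conditional expectation of the displayed identity reads
\begin{equation*}
\EC1{\|x_{\nS+1}-\mathsf{z}\|_{\HS}^2}{\XX_{\nS}}+\theta_{\nS}
=\|x_{\nS}-\mathsf{z}\|_{\HS}^2
+2\EC1{\lambda_{\nS}\scal{\mathsf{z}+d_{\nS}-x_{\nS}}{d_{\nS}}_{\HS}}{\XX_{\nS}},
\end{equation*}
and since $2\EC1{\lambda_{\nS}\scal{\mathsf{z}+d_{\nS}-x_{\nS}}{d_{\nS}}_{\HS}}{\XX_{\nS}}\le\delta_{\nS}(\cdot,\mathsf{z})$, the $\XX_{\nS}$-measurable variable $\nu_{\nS}(\mathsf{z})=\max\{0,2\EC1{\lambda_{\nS}\scal{\mathsf{z}+d_{\nS}-x_{\nS}}{d_{\nS}}_{\HS}}{\XX_{\nS}}\}$ satisfies $0\le\nu_{\nS}(\mathsf{z})\le\delta_{\nS}(\cdot,\mathsf{z})$, hence $(\nu_{\nS}(\mathsf{z}))_{\nnn}\in\ell^1_+(\mathfrak{X})$ and $\EC1{\|x_{\nS+1}-\mathsf{z}\|_{\HS}^2}{\XX_{\nS}}+\theta_{\nS}\le\|x_{\nS}-\mathsf{z}\|_{\HS}^2+\nu_{\nS}(\mathsf{z})$. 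This is exactly hypothesis~\eqref{e:l1} of Lemma~\ref{l:1} with $\CS=\ZS$, $\mathsf{p}=2$, and vanishing multiplicative errors, so parts~\ref{l:1i}--\ref{l:1vi} of that lemma yield \ref{t:3iva}, \ref{t:3ivd} (the $\theta_{\nS}$ series is independent of $\mathsf{z}$), \ref{t:3ivf}, \ref{t:3ivg}, \ref{t:3ivh}, and also \ref{t:3ivb} upon evaluating the common full-measure set of \ref{l:1iii} at the random point $z(\upomega)\in\ZS$.

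\emph{$L^p$ asymptotics.} Assume $\sum_{\nnn}\EE\delta_{\nS}(\cdot,\mathsf{z})<\pinf$ for every $\mathsf{z}\in\ZS$; by monotone convergence this subsumes the previous hypothesis. Applying \ref{t:3v} with a deterministic $\mathsf{z}\in\ZS$ and Corollary~\ref{l:5}\ref{l:5i} (with vanishing $\upchi_{\nS}$, noting $\EE\brk1{\lambda_{\nS}(2-\lambda_{\nS})\|d_{\nS}\|_{\HS}^2}\ge0$) gives \ref{t:3viic} together with the convergence of $(\|x_{\nS}-\mathsf{z}\|_{L^2(\upOmega,\FE,\PP;\HS)}^2)_{\nnn}$, whence \ref{t:3viia}. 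Consequently $(\|x_{\nS}-z\|_{\HS})_{\nnn}$ is bounded in $L^2$ for any $\ZS$-valued $z\in L^2(\upOmega,\FE,\PP;\HS)$, hence uniformly integrable, so Lemma~\ref{l:3} combined with its $\Pas$ convergence from \ref{t:3ivb} gives \ref{t:3csii}. For \ref{t:3weak}: $(x_{\nS})_{\nnn}$ is bounded in the Hilbert space $L^2(\upOmega,\FE,\PP;\HS)$ and hence has a weak cluster point $\overline{x}$ there; testing against $\mathsf{1}_{\mathsf{A}}\mathsf{v}$ with $\mathsf{A}\in\FE$, $\mathsf{v}\in\HS$, and using $x_{\nS}\weakly x\;\Pas$ together with the $L^2$-boundedness — hence uniform integrability — of $(\scal{x_{\nS}}{\mathsf{v}}_{\HS})_{\nnn}$ forces $\overline{x}=x\;\Pas$; separability of $\HS$ then gives $x\in L^2(\upOmega,\FE,\PP;\HS)$, and uniqueness of the cluster point upgrades to weak $L^2$-convergence. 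For \ref{t:3viig}: if $(x_{\nS})_{\nnn}\to x$ in $L^1$, a subsequence converges $\Pas$, and since $(\|x_{\nS}-x\|_{\HS})_{\nnn}$ converges $\Pas$ by \ref{t:3ivb}, the whole sequence converges $\Pas$ to $x$; conversely, $\Pas$ strong convergence and Fatou place $x$ in $L^2(\upOmega,\FE,\PP;\ZS)$, whereupon $(\|x_{\nS}-x\|_{\HS})_{\nnn}$ is uniformly integrable and Lemma~\ref{l:3} upgrades $\Pas$ to $L^1$-convergence; the weak $L^2$ assertion then follows from \ref{t:3weak}.

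\emph{Main obstacle.} The two delicate points are the ``randomization of the base point'' underlying \ref{t:3ii} — reduced to the deterministic case via Proposition~\ref{p:seq.simple} and conditional dominated convergence — and the passage from \ref{t:3i} to Lemma~\ref{l:1}: there the error $\delta_{\nS}(\cdot,\mathsf{z})$ is merely $\FE$-measurable, so to fit the Robbins--Siegmund mechanism of \ref{l:1} it must be replaced by the $\XX_{\nS}$-adapted, dominated surrogate $\nu_{\nS}(\mathsf{z})=\max\{0,2\EC{\lambda_{\nS}\scal{\mathsf{z}+d_{\nS}-x_{\nS}}{d_{\nS}}_{\HS}}{\XX_{\nS}}\}$.
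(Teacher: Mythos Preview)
Your proof is correct and follows essentially the same architecture as the paper: derive the one-step conditional inequality \ref{t:3i}, lift it to $\XX_{\nS}$-measurable random base points via simple approximation and conditional dominated convergence (Proposition~\ref{p:seq.simple}), and then feed everything into the stochastic quasi-Fej\'er machinery of Lemma~\ref{l:1} and Lemma~\ref{l:3}.

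One point is worth highlighting: your treatment of item~(v) is actually more careful than the paper's. The paper simply writes ``follows from \ref{t:3i} and Lemma~\ref{l:1}'', but Lemma~\ref{l:1} requires the error sequence $(\nu_{\nS}(\mathsf{z}))_{\nnn}$ to lie in $\ell^1_+(\mathfrak{X})$, i.e., to be $\XX_{\nS}$-adapted, whereas $\delta_{\nS}(\cdot,\mathsf{z})$ is only $\FE$-measurable. Your replacement $\nu_{\nS}(\mathsf{z})=\max\{0,2\EC{\lambda_{\nS}\scal{\mathsf{z}+d_{\nS}-x_{\nS}}{d_{\nS}}_{\HS}}{\XX_{\nS}}\}$ is $\XX_{\nS}$-measurable, dominated by $\delta_{\nS}(\cdot,\mathsf{z})$, and makes the appeal to Lemma~\ref{l:1} rigorous; this fills a small gap the paper leaves implicit. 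Your argument for \ref{t:3weak} (testing against $\mathsf{1}_{\mathsf{A}}\mathsf{v}$ and invoking uniform integrability of $(\scal{x_{\nS}}{\mathsf{v}}_{\HS})_{\nnn}$) is a mild streamlining of the paper's subsequence-plus-density argument, but leads to the same conclusion.
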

\begin{proof}
\ref{t:3i-}: By assumption, 
$x_{\mathsf{0}}\in L^2(\upOmega,\FE,\PP;\HS)$.
Now suppose that $x_{\nS}\in L^2(\upOmega,\FE,\PP;\HS)$. Then,
since $d_{\nS}\in L^2(\upOmega,\FE,\PP;\HS)$ and
$\lambda_{\nS}\in L^\infty(\upOmega,\FE,\PP;\RPP)$, 
$x_{\nS+1}=x_{\nS}-\lambda_{\nS} d_{\nS}\in 
L^2(\upOmega,\FE,\PP;\HS)$. This establishes the claim 
by induction. 

\ref{t:3i}: 
We derive from \eqref{e:a1} that
\begin{align}
\label{e:3.2}
&\quad
\EC1{\|x_{\nS+1}-\mathsf{z}\|_{\HS}^2}{\XX_{\nS}}\nonumber\\ 
&\qquad=\EC1{\|x_{\nS}-\mathsf{z}\|_{\HS}^2-2\lambda_{\nS}
\scal{x_{\nS}-\mathsf{z}}{d_{\nS}}_{\HS}+
\lambda_{\nS}^2\|d_{\nS}\|_{\HS}^2}
{\XX_{\nS}}\nonumber\\
&\qquad=\|x_{\nS}-\mathsf{z}\|_{\HS}^2-\EC1{\lambda_{\nS}(2-
\lambda_{\nS})\|d_{\nS}\|_{\HS}^2}{\XX_{\nS}}+2\EC1{\lambda_{\nS}
\scal{\mathsf{z}+d_{\nS}-x_{\nS}}{d_{\nS}}_{\HS}}
{\XX_{\nS}}\nonumber\\
&\qquad\leq\|x_{\nS}-\mathsf{z}\|_{\HS}^2-\EC1{\lambda_{\nS}(2-
\lambda_{\nS})\|d_{\nS}\|_{\HS}^2}{\XX_{\nS}}+
\delta_{\nS}(\cdot,\mathsf{z})\;\;\Pas
\end{align}

\ref{t:3ii}: 
First, let $s$ be a $\ZS$-valued $\XX_{\nS}$-simple mapping. Then
there exists a finite family of disjoint sets 
$(\mathsf{F}_{\iS})_{\iS\in\mathsf{I}}$ in 
$\XX_{\nS}$ such that $\bigcup_{\iS\in\mathsf{I}} 
\mathsf{F}_{\iS}=\upOmega$, and a family 
$(\mathsf{z}_{\iS})_{\iS\in\mathsf{I}}$ in 
$\ZS$ such that $s=\sum_{\iS\in\mathsf{I}}
\mathsf{1}_{\mathsf{F}_{\iS}}\mathsf{z}_{\iS}$. Then,
by \ref{t:3i},
\begin{align}
\EC1{\|x_{\nS+1}-s\|_{\HS}^2}{\XX_{\nS}}&=
\EC3{\bigg\|\sum_{\iS\in\mathsf{I}}
\mathsf{1}_{\mathsf{F}_{\iS}}
(x_{\nS+1}-\mathsf{z}_{\iS})\bigg\|_{\HS}^2}
{\XX_{\nS}}\nonumber\\ 
&=\EC3{\sum_{\iS\in\mathsf{I}}
\|x_{\nS+1}-\mathsf{z}_{\iS}
\|_{\HS}^2\mathsf{1}_{\mathsf{F}_{\iS}}}{\XX_{\nS}}
\nonumber\\
&=\sum_{\iS\in\mathsf{I}}
\EC1{\|x_{\nS+1}-\mathsf{z}_{\iS}\|_{\HS}^2}
{\XX_{\nS}}\mathsf{1}_{\mathsf{F}_{\iS}}\nonumber\\
&\leq\sum_{\iS\in\mathsf{I}}
\|x_{\nS}-\mathsf{z}_{\iS}\|_{\HS}^2
\mathsf{1}_{\mathsf{F}_{\iS}}+\sum_{\iS\in\mathsf{I}}
\brk2{-\EC1{\lambda_{\nS}(2-\lambda_{\nS})\|d_{\nS}\|_{\HS}^2}
{\XX_{\nS}}+\delta_{\nS}(\cdot,\mathsf{z}_{\iS})}
\mathsf{1}_{\mathsf{F}_{\iS}}
\nonumber\\
&=\bigg\|\sum_{\iS\in\mathsf{I}}
\mathsf{1}_{\mathsf{F}_{\iS}}
(x_{\nS}-\mathsf{z}_{\iS})\bigg\|_{\HS}^2-\EC1{\lambda_{\nS}
(2-\lambda_{\nS})\|d_{\nS}\|_{\HS}^2}{\XX_{\nS}}+
\sum_{\iS\in\mathsf{I}}\delta_{\nS}(\cdot,\mathsf{z}_{\iS})
\mathsf{1}_{\mathsf{F}_{\iS}} 
\nonumber\\
&=\|x_{\nS}-s\|_{\HS}^2-\EC1{\lambda_{\nS}(2-
\lambda_{\nS})\|d_{\nS}\|_{\HS}^2}{\XX_{\nS}}+
\delta_{\nS}(\cdot,s)\;\;\Pas
\label{e:3.11}
\end{align}
Next, Proposition~\ref{p:seq.simple} guarantees the existence of 
a sequence of $\ZS$-valued $\XX_{\nS}$-simple mappings
$(s_{\jS})_{\jS\in\NN}$ such that $s_{\jS}\to z\;\Pas$ and
$\sup_{\jS\in\NN}\|s_{\jS}\|^2_{\HS}\leq\|z\|^2_{\HS}+1\;\Pas$
Thus, we derive from \eqref{e:3.11} that 
\begin{align}
(\forall\jS\in\NN)\quad
\EC1{\|x_{\nS+1}-s_{\jS}\|_{\HS}^2}{\XX_{\nS}}
&\leq\|x_{\nS}-s_{\jS}\|_{\HS}^2
-\EC1{\lambda_{\nS}(2-\lambda_{\nS})\|d_{\nS}\|_{\HS}^2}{\XX_{\nS}}
+\delta_{\nS}(\cdot,s_{\jS})\;\;\Pas\label{e:3.4}
\end{align}
Additionally,
\begin{equation}
(\forall\jS\in\NN)\quad
\|x_{\nS+1}-s_{\jS}\|_{\HS}^2
\leq 2\|x_{\nS+1}\|_{\HS}^2+2\|s_{\jS}\|_{\HS}^2
\leq 2\|x_{\nS+1}\|_{\HS}^2+2\|z\|_{\HS}^2+2\;\;\Pas
\label{e:3.5}
\end{equation}
Note that the right-hand term in \eqref{e:3.5} is integrable and
that $\|x_{\nS+1}-s_{\jS}\|_{\HS}^2\to
\|x_{\nS+1}-z\|_{\HS}^2\;\Pas$ as $\jS\to\pinf$. 
Therefore, by the conditional dominated convergence 
theorem \cite[Theorem~2.7.2(a)]{Shir16}, 
\begin{equation}
\EC{\|x_{\nS+1}-s_{\jS}\|_{\HS}^2}{\XX_{\nS}}\to 
\EC{\|x_{\nS+1}-z\|_{\HS}^2}{\XX_{\nS}}\;\Pas 
\;\;\text{as}\;\;\jS\to\pinf.
\end{equation}
On the other hand, the continuity of $\delta_{\nS}$ with respect to
the $\HS$-variable implies that
$\delta_{\nS}(\cdot,s_{\jS})\to\delta_{\nS}(\cdot,z)\;\Pas$ as
$\jS\to\pinf$.
Altogether, taking the limit as $\jS\to\pinf$ in \eqref{e:3.4}
yields 
\begin{equation}
\label{e:ref37}
\EC1{\|x_{\nS+1}-z\|_{\HS}^2}{\XX_{\nS}}
\leq\|x_{\nS}-z\|_{\HS}^2
-\EC1{\lambda_{\nS}(2-\lambda_{\nS})\|d_{\nS}\|_{\HS}^2}{\XX_{\nS}}
+\delta_{\nS}(\cdot,z)\;\;\Pas
\end{equation}

\ref{t:3v}: Take the expected value in \ref{t:3ii}.

\ref{t:3iva}: This follows from \ref{t:3i} and 
Lemma~\ref{l:1}\ref{l:1ii}.

\ref{t:3ivb}: Let $\upOmega''\in\FE$ be such that
$\PP(\upOmega'')=1$
and, for every $\upomega\in\upOmega''$, $z(\upomega)\in\ZS$. 
Further, let $\upOmega'\in\FE$ be given as in
Lemma~\ref{l:1}\ref{l:1iii}, which holds as a consequence of 
\ref{t:3i}. Then
\begin{equation}
\brk1{\forall\upomega\in\upOmega'\cap\upOmega''}\quad 
\brk1{\|x_{\nS}(\upomega)-z(\upomega)\|_{\HS}}_{\nnn}\;
\text{converges,}
\end{equation}
which confirms that
$(\|x_{\nS}-z\|_{\HS})_{\nnn}$ converges $\Pas$ since 
$\PP(\upOmega'\cap\upOmega'')=1$. 

\ref{t:3ivd}: Let $\mathsf{z}\in\ZS$. In view of
\ref{t:3i} and Lemma~\ref{l:1}\ref{l:1i},
\begin{equation}
\label{e:3.6}
\sum_{\nnn}\EC1{\lambda_{\nS}(2-\lambda_{\nS})
\|d_{\nS}\|_{\HS}^2}{\XX_{\nS}}<\pinf\;\Pas
\end{equation}

\ref{t:3ivf}--\ref{t:3ivh}: These follow from \ref{t:3i} and 
Lemma~\ref{l:1}\ref{l:1iv}--\ref{l:1vi}.

\ref{t:3viia}: Note that
$\{\emp,\upOmega\}\subset\bigcap_{\nnn}\XX_{\nS}$. 
It follows from \ref{t:3v} and the assumption that,
for every $\zS\in\ZS$, 
$\sum_{\nnn}\EE\delta_{\nS}(\cdot,\zS)<\pinf$,
that $(x_{\nS})_{\nnn}$ is quasi-Fej\'er of Type III in
$L^2(\upOmega,\FE,\PP;\HS)$ relative to 
$L^2(\upOmega,\{\emp,\upOmega\},\PP;\ZS)$ 
\cite[Definition~1.1]{Else01}.
Hence, \cite[Proposition~3.3(i)]{Else01} implies
that $(x_{\nS})_{\nnn}$ is bounded in $L^2(\upOmega,\FE,\PP;\HS)$.

\ref{t:3csii}: It follows from \ref{t:3viia} that 
$\sup_{\nnn}\EE\|x_{\nS}-z\|_{\HS}^2<\pinf$ and from \ref{t:3ivb} 
that$(\|x_{\nS}-z\|_{\HS})_{\nnn}$ converges $\Pas$
We then invoke Lemma~\ref{l:3} to deduce that 
$\EE\|x_{\nS}-z\|_{\HS}\to\EE\brk1{\lim\|x_{\nS}-z\|_{\HS}}<\pinf$.

\ref{t:3viic}: Let $\zS\in\ZS$. 
Then, in view of \ref{t:3v} and Corollary~\ref{l:5}\ref{l:5i},
\begin{equation}
\sum_{\nnn}\EE\brk1{\lambda_{\nS}(2-\lambda_{\nS})
\norm{d_{\nS}}_{\HS}^2}<\pinf.
\end{equation}

\ref{t:3weak}: In view of \ref{t:3viia}, 
$(x_{\nS})_{\nnn}$ possesses a weak sequential cluster point
$w\in L^2(\upOmega,\FE,\PP;\HS)$, i.e., there exists a strictly 
increasing sequence $({\kS_{\nS}})_{\nnn}$ in $\NN$ such that 
$x_{\kS_{\nS}}\weakly w$ in $L^2(\upOmega,\FE,\PP;\HS)$. 
However, since $\HS$ is separable, it contains a 
countable dense set $\{\mathsf{y}_{\jS}\}_{\jjj}$. Let us fix
temporarily $\jjj$ and identify $\mathsf{y}_{\jS}$ with a constant
random variable in $L^2(\upOmega,\FE,\PP;\HS)$. Then
$\EE\scal{x_{\kS_{\nS}}-w}{\mathsf{y}_{\jS}}_{\HS}\to 0$ and we can
therefore extract a further subsequence 
$(x_{\mathsf{l}_{\kS_{\nS}}})_{\nnn}$ such that
$\scal{x_{\mathsf{l}_{\kS_{\nS}}}-w}
{\mathsf{y}_{\jS}}_{\HS}\to 0\;\Pas$
On the other hand, the assumption yields 
$\scal{x_{\mathsf{l}_{\kS_{\nS}}}-x}
{\mathsf{y}_{\jS}}_{\HS}\to 0\;\Pas$ 
We deduce from the $\PP$-almost sure uniqueness of
the limit that there exists $\upOmega_{\jS}\in\FE$ such that
$\PP(\upOmega_{\jS})=1$ and 
\begin{equation}
\label{e:kj1}
(\forall\upomega\in\upOmega_{\jS})\quad
\scal{x(\upomega)}{\mathsf{y}_{\jS}}_{\HS}
=\scal{w(\upomega)}{\mathsf{y}_{\jS}}_{\HS}.
\end{equation}
Let us set $\upOmega''=\bigcap_{\jjj}\upOmega_{\jS}$
and note that $\PP(\upOmega'')=1$. 
Then \eqref{e:kj1} yields
\begin{equation}
\label{e:3.7}
(\forall\jjj)(\forall\upomega\in\upOmega'')
\quad\scal{x(\upomega)-w(\upomega)}{\mathsf{y}_{\jS}}_{\HS}=0.
\end{equation}
By density, for every $\upomega\in\upOmega''$, there exists
a strictly increasing sequence $(\iS_{\jS})_{\jjj}$ such that
$\mathsf{y}_{\iS_{\jS}}\to{x(\upomega)-w(\upomega)}$ and it results
from \eqref{e:3.7} that 
\begin{equation}
\|x(\upomega)-w(\upomega)\|_{\HS}^2=
\scal{x(\upomega)-w(\upomega)}{x(\upomega)-w(\upomega)}_{\HS}=0,
\end{equation}
which shows that $x(\upomega)=w(\upomega)$. Thus, $x=w\;\Pas$ and
it follows from \cite[Lemma~2.46]{Livre1} that
$x_{\nS}\weakly x$ in $L^2(\upOmega,\FE,\PP;\HS)$.

\ref{t:3viig}:
Suppose that $x_{\nS}\to x\;\Pas$ Then it follows from 
\ref{t:3viia} and Lemma~\ref{l:3} that 
$x\in L^1(\upOmega,\FE,\PP;\ZS)$ and $x_{\nS}\to x$ in
$L^1(\upOmega,\FE,\PP;\HS)$. Conversely, suppose that 
$x\in L^1(\upOmega,\FE,\PP;\ZS)$ and $x_{\nS}\to x$ in
$L^1(\upOmega,\FE,\PP;\HS)$, i.e., $\EE\|x_{\nS}-x\|_{\HS}\to 0$.
Then there exists a strictly increasing sequence
$(\kS_{\nS})_{\nnn}$ in $\NN$ such that
$\|x_{\kS_{\nS}}-x\|_{\HS}\to 0\;\Pas$ On the other hand, 
\ref{t:3ivb} guarantees that $(\|x_{\nS}-x\|_{\HS})_{\nnn}$ 
converges $\Pas$ Since the $\PP$-almost sure limit of any
subsequence coincides with the $\PP$-almost sure limit of the
sequence, we conclude that $\|x_{\nS}-x\|_{\HS}\to0\;\Pas$
Additionally, as $\PP$-almost sure strong convergence implies
$\PP$-almost sure weak convergence, we deduce from \ref{t:3weak}
that $x\in L^2(\upOmega,\FE,\PP;\HS)$ and $x_{\nS}\weakly x$ in
$L^2(\upOmega,\FE,\PP;\HS)$.
\end{proof}

We now assume that the tolerance variables $(\delta_{\nS})_{\nnn}$
are constant with respect to the $\HS$-variable and depend only on
the $\upOmega$-variable.

\begin{theorem}
\label{t:3.5}
Let $(x_{\nS})_{\nnn}$ be the sequence generated by
Algorithm~\ref{algo:3}. For every $\nnn$, assume that
$\delta_{\nS}$ is constant with respect to the $\HS$-variable and
set, for every $\upomega\in\upOmega$,
$\vartheta_{\nS}(\upomega)=\delta_{\nS}(\upomega,\mathsf{0})$.
Then the following hold:
\begin{enumerate}
\item
\label{t:3iii}
Let $\nnn$. Then $\EC{\dS_{\ZS}^2(x_{\nS+1})}{\XX_{\nS}}\leq 
\dS_{\ZS}^2(x_{\nS})+\vartheta_{\nS}\;\Pas$
\item
\label{t:3vi}
Let $\nnn$. Then 
$\EE{\dS_{\ZS}^2(x_{\nS+1})}\leq\EE \dS_{\ZS}^2(x_{\nS})
+\EE\vartheta_{\nS}$.
\item
\label{t:3ivc}
Suppose that
$\sum_{\nnn}\vartheta_{\nS}<\pinf\;\Pas$ Then
$(\dS_{\ZS}(x_{\nS}))_{\nnn}$ converges $\Pas$
\item
Suppose that
$\sum_{\nnn}\EE\vartheta_{\nS}<\pinf$.
Then the following hold:
\begin{enumerate}
\item
\label{t:3viib}
$(\EE\dS_{\ZS}^2(x_{\nS}))_{\nnn}$ converges.
\item
\label{t:3viih}
Suppose that $\ZS$ is convex and that
$\varliminf\EE{\dS_{\ZS}^2(x_{\nS})}=0$. Then $(x_{\nS})_{\nnn}$
converges strongly in $L^2(\upOmega,\FE,\PP;\HS)$ and $\Pas$ to a 
$\ZS$-valued random variable.
\item
\label{t:3viiI}
Suppose that $\ZS$ is convex and that there exists 
$\upchi\in\zeroun$ such that 
\begin{equation}
\label{e:kj2}
(\forall\nnn)\quad\EC1{\dS_{\ZS}^2(x_{\nS+1})}{\XX_{\nS}}
\leq\upchi\dS_{\ZS}^2(x_{\nS})+\vartheta_{\nS}\;\;\Pas
\end{equation}
Then the following are satisfied:
\begin{enumerate}[label=\normalfont{[\Alph*]}]
\item
\label{t:3viiI1}
Let $\nnn$. Then $\EE\dS_{\ZS}^2(x_{\nS+1})
\leq\upchi^{\nS+1}\EE\dS_{\ZS}^2(x_{\mathsf{0}})
+\sum_{\jS=0}^{\nS}\upchi^{\nS-\jS}\EE\vartheta{\jS}$.
\item
\label{t:3viiI2}
There exists $x\in L^2(\upOmega,\FE,\PP;\ZS)$ such 
that $(x_{\nS})_{\nnn}$ converges strongly in 
$L^2(\upOmega,\FE,\PP;\HS)$ and $\Pas$ to $x$, and
\begin{equation}
\label{e:3.1}
(\forall\nnn)\quad \EE\|x_{\nS}-x\|_{\HS}^2
\leq4\upchi^{\nS}\EE\dS^2_{\ZS}(x_{\mathsf{0}})
+4\sum_{\jS=0}^{\nS-1}\upchi^{\nS-\jS-1}\EE\vartheta{\jS}
+2\sum_{\jS\geq\nS}\EE\vartheta{\jS}.
\end{equation}
\end{enumerate}
\end{enumerate}
\end{enumerate}
\end{theorem}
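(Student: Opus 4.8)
Items \ref{t:3iii}--\ref{t:3ivc}, \ref{t:3viib} and \ref{t:3viiI1} are routine reductions to Theorem~\ref{t:3} and the recursion lemmas of Section~\ref{sec:22}; the substance lies in the strong and almost sure convergence asserted in \ref{t:3viih}, on which \ref{t:3viiI2} then builds. Throughout, the standing hypothesis that $\delta_\nS$ is constant in the $\HS$-variable is used only to replace every $\delta_\nS(\cdot,\zS)$ or $\delta_\nS(\cdot,z)$ occurring in Theorem~\ref{t:3} by $\vartheta_\nS$, while the term $\EC{\lambda_\nS(2-\lambda_\nS)\|d_\nS\|_\HS^2}{\XX_\nS}$ — and, after taking expectations, $\EE\brk1{\lambda_\nS(2-\lambda_\nS)\|d_\nS\|_\HS^2}$, nonnegative by the first constraint in Algorithm~\ref{algo:3} — will simply be dropped.

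For \ref{t:3iii} I would fix a deterministic $\zS\in\ZS$: Theorem~\ref{t:3}\ref{t:3i}, the pointwise bound $\dS_\ZS^2(x_{\nS+1})\leq\|x_{\nS+1}-\zS\|_\HS^2$, and monotonicity of the conditional expectation give $\EC{\dS_\ZS^2(x_{\nS+1})}{\XX_\nS}\leq\|x_\nS-\zS\|_\HS^2+\vartheta_\nS\;\Pas$ ($\dS_\ZS^2(x_{\nS+1})\in L^1$ since $x_{\nS+1}\in L^2(\upOmega,\FE,\PP;\HS)$); then, picking a countable dense subset $\{\zS_\jS\}_{\jjj}$ of $\ZS$ (available since $\HS$ is separable), discarding the countably many associated $\PP$-null sets, and taking the infimum over $\jS$ — legitimate because $\zS\mapsto\|x_\nS(\upomega)-\zS\|_\HS$ is continuous — yields the stated inequality. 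Item \ref{t:3vi} follows by taking expectations and the tower rule; \ref{t:3ivc} by applying Lemma~\ref{l:4}\ref{l:4i} with $\FE_\nS=\XX_\nS$, $\alpha_\nS=\dS_\ZS^2(x_\nS)$, $\theta_\nS=\chi_\nS=0$, $\eta_\nS=\vartheta_\nS$; \ref{t:3viib} by applying Corollary~\ref{l:5}\ref{l:5i} to the deterministic sequence $(\EE\dS_\ZS^2(x_\nS))_\nnn$ via \ref{t:3vi}; and \ref{t:3viiI1} by a one-line induction on $\nS$ after taking expectations in \eqref{e:kj2}.

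The core is \ref{t:3viih}. Since \ref{t:3viib} already gives convergence of $(\EE\dS_\ZS^2(x_\nS))_\nnn$, the assumption $\varliminf\EE\dS_\ZS^2(x_\nS)=0$ upgrades to $\EE\dS_\ZS^2(x_\nS)\to0$. The plan is a quasi-Fej\'er telescoping anchored at the \emph{moving} point $z_m:=\proj_\ZS x_m$: this is precisely where convexity of $\ZS$ is needed, since it makes $\proj_\ZS$ single-valued and nonexpansive, so that $z_m\in L^2(\upOmega,\XX_m,\PP;\ZS)$ and $\|x_m-z_m\|_{L^2(\upOmega,\FE,\PP;\HS)}^2=\EE\dS_\ZS^2(x_m)$. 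For fixed $m$ and every $\nS\geq m$ one has $z_m\in L^2(\upOmega,\XX_\nS,\PP;\ZS)$, so Theorem~\ref{t:3}\ref{t:3v} gives $\|x_{\nS+1}-z_m\|_{L^2(\upOmega,\FE,\PP;\HS)}^2\leq\|x_\nS-z_m\|_{L^2(\upOmega,\FE,\PP;\HS)}^2+\EE\vartheta_\nS$, and telescoping from $m$ produces $\sup_{\nS\geq m}\|x_\nS-z_m\|_{L^2(\upOmega,\FE,\PP;\HS)}^2\leq\EE\dS_\ZS^2(x_m)+\sum_{k\geq m}\EE\vartheta_k=:\rho_m$, with $\rho_m\to0$. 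A triangle inequality then makes $(x_\nS)_\nnn$ Cauchy in the Hilbert space $L^2(\upOmega,\FE,\PP;\HS)$, hence strongly convergent there to some $x$; the $1$-Lipschitz continuity of $\dS_\ZS$ gives $\EE\dS_\ZS^2(x)=\lim_\nS\EE\dS_\ZS^2(x_\nS)=0$, so $x\in L^2(\upOmega,\FE,\PP;\ZS)$. For almost sure convergence, $\sum_\nnn\EE\vartheta_\nS<\pinf$ forces $\sum_\nnn\vartheta_\nS<\pinf\;\Pas$, so Theorem~\ref{t:3}\ref{t:3ivb} (applied with $z=x$) makes $(\|x_\nS-x\|_\HS)_\nnn$ convergent $\Pas$; since $L^2$ convergence extracts a subsequence converging to $x$ $\Pas$, that limit is $0$.

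Finally, for \ref{t:3viiI2}: letting $\nS\to\pinf$ in the expectation of \eqref{e:kj2} and using \ref{t:3viib}, $\upchi\in\zeroun$ and $\EE\vartheta_\nS\to0$ forces $\EE\dS_\ZS^2(x_\nS)\to0$, so \ref{t:3viih} already supplies $x\in L^2(\upOmega,\FE,\PP;\ZS)$ with $x_\nS\to x$ strongly in $L^2(\upOmega,\FE,\PP;\HS)$ and $\Pas$ For the explicit bound I would reuse $\sup_{\nS\geq m}\|x_\nS-z_m\|_{L^2(\upOmega,\FE,\PP;\HS)}^2\leq\EE\dS_\ZS^2(x_m)+\sum_{k\geq m}\EE\vartheta_k$ from the previous step, let $\nS\to\pinf$ to bound $\|x-z_m\|_{L^2(\upOmega,\FE,\PP;\HS)}$, then combine with $\|x_m-z_m\|_{L^2(\upOmega,\FE,\PP;\HS)}^2=\EE\dS_\ZS^2(x_m)$ through the triangle inequality and $(a+b)^2\leq2a^2+2b^2$ to get $\|x_m-x\|_{L^2(\upOmega,\FE,\PP;\HS)}^2\leq4\EE\dS_\ZS^2(x_m)+2\sum_{k\geq m}\EE\vartheta_k$, and substitute the bound on $\EE\dS_\ZS^2(x_m)$ from \ref{t:3viiI1}. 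I expect \ref{t:3viih} to be the main obstacle: it is the only place where the limit must actually be produced rather than an inequality propagated, and both the choice of the moving anchor $\proj_\ZS x_m$ (where convexity of $\ZS$ enters) and the bookkeeping upgrading $L^2$ convergence to almost sure convergence call for care.
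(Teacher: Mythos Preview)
Your proposal is correct and follows essentially the same architecture as the paper's proof: the routine items are dispatched via Theorem~\ref{t:3} and the recursion lemmas exactly as you outline, and for \ref{t:3viih} the paper likewise anchors a Cauchy estimate at the moving point $\proj_{\ZS}x_{\nS}$ (convexity of $\ZS$ used precisely here), obtains $\EE\|x_{\nS}-x_{\nS+\mathsf{m}}\|_{\HS}^2\leq 4\EE\dS_{\ZS}^2(x_{\nS})+2\sum_{\jS=\nS}^{\nS+\mathsf{m}-1}\EE\vartheta_{\jS}$, concludes $L^2$ convergence, identifies the limit in $\ZS$, and then upgrades to almost sure convergence. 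The only cosmetic differences are that the paper derives the Cauchy bound at the conditional level (Theorem~\ref{t:3}\ref{t:3ii}) before taking expectations, whereas you telescope directly in $L^2$ via Theorem~\ref{t:3}\ref{t:3v}; and for the almost sure upgrade the paper quotes Theorem~\ref{t:3}\ref{t:3viig} (the $L^1\Leftrightarrow\Pas$ equivalence) rather than your combination of Theorem~\ref{t:3}\ref{t:3ivb} with subsequence extraction---both are valid and closely related. Your countable-dense-subset treatment of the infimum in \ref{t:3iii} is in fact more careful than the paper's one-line ``taking the infimum over $\zS\in\ZS$''.
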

\begin{proof}
\ref{t:3iii}: Let $\zS\in\ZS$. Then Theorem~\ref{t:3}\ref{t:3i} 
yields $\EC{\|x_{\nS+1}-\zS\|_{\HS}^2}{\XX_{\nS}}
\leq\|x_{\nS}-\zS\|_{\HS}^2+\vartheta_{\nS}\;\Pas$ 
On the other hand,
$\dS_{\ZS}(x_{\nS+1})\leq\|x_{\nS+1}-\zS\|_{\HS}\;\Pas$ Thus,
\begin{equation}
\EC1{\dS_{\ZS}^2(x_{\nS+1})}{\XX_{\nS}}
\leq\EC1{\|x_{\nS+1}-\zS\|_{\HS}^2}{\XX_{\nS}}
\leq\|x_{\nS}-\zS\|_{\HS}^2+\vartheta_{\nS}\;\Pas
\end{equation}
Taking the infimum over $\zS\in\ZS$ yields the claim.

\ref{t:3vi}: Take the expected value in \ref{t:3iii}.

\ref{t:3ivc}: This follows from \ref{t:3iii} and
Lemma~\ref{l:4}\ref{l:4i}.

\ref{t:3viib}: This follows from \ref{t:3vi} and 
Corollary~\ref{l:5}\ref{l:5i}.

\ref{t:3viih}: 
Let $\nnn$, $\mathsf{m}\in\NN\smallsetminus\{0\}$, 
and $z\in L^2(\upOmega,\XX_{\nS},\PP;\ZS)$. Then
$z\in\bigcap_{1\leq\jS\leq\mathsf{m}}
L^2(\upOmega,\XX_{\nS+\jS},\PP;\HS)$ and we derive inductively from
\eqref{e:a1} and Theorem~\ref{t:3}\ref{t:3ii} that
\begin{align}
\EC1{\|x_{\nS}-x_{\nS+\mathsf{m}}\|_{\HS}^2}{\XX_{\nS}}
&\leq 2\EC2{\|x_{\nS}-z\|_{\HS}^2+
\|x_{\nS+\mathsf{m}}-z\|_{\HS}^2}{\XX_{\nS}}
\nonumber\\
&\leq 2\|x_{\nS}-z\|_{\HS}^2+2\EC2{
\EC1{\|x_{\nS+\mathsf{m}}-z\|_{\HS}^2}
{\XX_{\nS+\mathsf{m}-1}}}{\XX_{\nS}}
\nonumber\\
&\leq 4\|x_{\nS}-z\|_{\HS}^2+2\sum_{\jS=\nS}^
{\nS+\mathsf{m}-1}\vartheta{\jS}\;\;\Pas 
\label{e:3.8}
\end{align}
Now assume that $z=\proj_{\ZS}x_{\nS}$ and recall that
$\proj_{\ZS}$ is nonexpansive \cite[Proposition~4.16]{Livre1}
while $x_{\nS}$ is $(\XX_{\nS},\BE_{\HS})$-measurable. 
Consequently, $z$ is $(\XX_{\nS},\BE_{\HS})$-measurable.
Given $y\in L^2(\upOmega,\XX_{\nS},\PP;\ZS)$,
\begin{multline}
\frac{1}{2}\EE\norm{z}^2_{\HS}
=\frac{1}{2}\EE\norm{z-y+y}^2_{\HS}
\leq\EE\norm{\proj_{\ZS}x_{\nS}-\proj_{\ZS}y}^2_{\HS}
+\EE\norm{y}^2_{\HS}\\
\leq\norm{x_{\nS}-y}^2_{L^2(\upOmega,\XX_{\nS},\PP;\ZS)}+
\norm{y}^2_{L^2(\upOmega,\XX_{\nS},\PP;\ZS)},
\end{multline}
which shows that $z\in L^2(\upOmega,\XX_{\nS},\PP;\ZS)$.
Further,
\eqref{e:3.8} yields
\begin{equation}
\label{e:3.9}
\EC1{\|x_{\nS}-x_{\nS+\mathsf{m}}\|_{\HS}^2}{\XX_{\nS}}
\leq4\dS_{\ZS}^2(x_{\nS})+2\sum_{\jS=\nS}^
{\nS+\mathsf{m}-1}{\vartheta{\jS}}\;\;\Pas 
\end{equation}
Therefore, upon taking expectations, we get
\begin{equation}
\label{e:3.10}
\EE{\|x_{\nS}-x_{\nS+\mathsf{m}}\|_{\HS}^2}
\leq 4\EE\dS_{\ZS}^2(x_{\nS})+2\sum_{\jS=\nS}^
{\nS+\mathsf{m}-1}\EE\vartheta{\jS}. 
\end{equation}
The assumption $\varliminf \EE\dS_{\ZS}^2(x_{\nS})=0$ and 
\ref{t:3viib} yield $\lim\EE\dS_{\ZS}^2(x_{\nS})=0$. 
In addition, 
\begin{equation}
(\forall\mathsf{m}\in\NN\smallsetminus\{0\})\quad
0\leq\sum_{\jS=\nS}^{\nS+\mathsf{m}-1}
\EE\vartheta{\jS}
\leq\sum_{\jS\geq\nS}\EE\vartheta{\jS}\to0\;\text{as}\;\nS\to\pinf.
\end{equation}
We thus infer from \eqref{e:3.10} that
$(x_{\nS})_{\nnn}$ is a Cauchy sequence in 
$L^2(\upOmega,\FE,\PP;\HS)$, which implies that there exists
$x\in L^2(\upOmega,\FE,\PP;\HS)$ such that $x_{\nS}\to x$ in
$L^2(\upOmega,\FE,\PP;\HS)$. Further, since
$\dS_{\ZS}^2\colon\HS\to\RP$ is continuous, 
$\dS_{\ZS}^2(x_{\nS})\to\dS_{\ZS}^2(x)\;\Pas$ In addition, 
it follows from Fatou's lemma that
\begin{equation}
0\leq \EE\dS_{\ZS}^2(x)\leq\varliminf\EE\dS_{\ZS}^2(x_{\nS})=0.
\end{equation}
Hence $\EE\dS_{\ZS}^2(x)=0$, $\dS_{\ZS}^2(x)=0\;\Pas$, and 
$x\in\ZS\;\Pas$ 
Finally, Theorem~\ref{t:3}\ref{t:3viig} yields 
$x_{\nS}\to x\;\Pas$ 

\ref{t:3viiI}:

\ref{t:3viiI1}: Taking expectations in \eqref{e:kj2} yields
$\EE\dS_{\ZS}^2(x_{\nS+1})\leq\upchi
\EE\dS_{\ZS}^2(x_{\nS})+\EE\vartheta_{\nS}$. The claim follows by
induction. 

\ref{t:3viiI2}: It follows from 
Corollary~\ref{l:5}\ref{l:5ii} that 
$\lim\EE\dS_{\ZS}^2(x_{\nS})=0$. Therefore, 
\ref{t:3viih} implies that $(x_{\nS})_{\nnn}$ converges 
strongly in $L^2(\upOmega,\FE,\PP;\HS)$ and $\Pas$ to a
$\ZS$-valued random variable. Finally,
arguing as in \cite[Theorem~3.13(ii)]{Else01}, we obtain 
\eqref{e:3.1}.
\end{proof}

\subsection{A stochastic algorithm with super
relaxations}
\label{sec:32}

We study an implementation of Algorithm~\ref{algo:2} in which the
standard condition that the relaxations are deterministic and
bounded above by $2$ is not imposed. In Section~\ref{sec:1} we
called such relaxations super relaxations. 

\begin{algorithm}
\label{algo:4}
In Algorithm~\ref{algo:2} assume that, for every $\nnn$, 
$\varepsilon_{\nS}\in\mathfrak{C}(\upOmega,\FE,\PP;\HS)$,
$\lambda_{\nS}$ is independent of 
$\upsigma(\{x_{\mathsf{0}},\ldots,x_{\nS},d_{\nS}\})$, and
$\EE(\lambda_{\nS}(2-\lambda_{\nS}))\geq 0$.
\end{algorithm}

\begin{proposition}
\label{p:1}
Algorithm~\ref{algo:4} is a special case of 
Algorithm~\ref{algo:3} where, for every $\nnn$, 
$\delta_{\nS}=2\varepsilon_{\nS}\EE\lambda_{\nS}$. 
\end{proposition}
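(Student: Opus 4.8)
The plan is to produce the explicit dictionary that realizes an instance of Algorithm~\ref{algo:4} as an instance of Algorithm~\ref{algo:3}: keep $\lambda_{\nS}$ and $\XX_{\nS}$ unchanged, take the search direction of Algorithm~\ref{algo:3} to be $d_{\nS}=\alpha_{\nS}t^*_{\nS}$, and set $\delta_{\nS}=2\varepsilon_{\nS}\EE\lambda_{\nS}$. The update $x_{\nS+1}=x_{\nS}-\lambda_{\nS}d_{\nS}$, the filtration $\XX_{\nS}=\upsigma(x_{\mathsf{0}},\dots,x_{\nS})$, and the initialization $x_{\mathsf{0}}\in L^2(\upOmega,\FE,\PP;\HS)$ are then literally those of Algorithm~\ref{algo:3}, so what has to be checked is the four structural requirements of Algorithm~\ref{algo:3}: \textup{(i)} $d_{\nS}\in L^2(\upOmega,\FE,\PP;\HS)$; \textup{(ii)} $\delta_{\nS}\in\mathfrak{C}(\upOmega,\FE,\PP;\HS)$; \textup{(iii)} $\EC{\lambda_{\nS}(2-\lambda_{\nS})\|d_{\nS}\|_{\HS}^2}{\XX_{\nS}}\geq 0\;\Pas$; and \textup{(iv)} for every $\mathsf{z}\in\ZS$, $\EC{\lambda_{\nS}\scal{\mathsf{z}+d_{\nS}-x_{\nS}}{d_{\nS}}_{\HS}}{\XX_{\nS}}\leq\delta_{\nS}(\cdot,\mathsf{z})/2\;\Pas$

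For \textup{(i)}, since $\alpha_{\nS}\geq 0$ we have $\|d_{\nS}\|_{\HS}=\alpha_{\nS}\|t^*_{\nS}\|_{\HS}$; expanding with the definition of $\alpha_{\nS}$ and using $|\scal{x_{\nS}}{t^*_{\nS}}_{\HS}|\leq\|x_{\nS}\|_{\HS}\|t^*_{\nS}\|_{\HS}$ yields $\|d_{\nS}\|_{\HS}\leq\|x_{\nS}\|_{\HS}+\bigl|\mathsf{1}_{[t^*_{\nS}\neq0]}\mathsf{1}_{[\scal{x_{\nS}}{t^*_{\nS}}_{\HS}>\eta_{\nS}]}\eta_{\nS}/(\|t^*_{\nS}\|_{\HS}+\mathsf{1}_{[t^*_{\nS}=0]})\bigr|$; the first summand lies in $L^2$ because $x_{\nS}\in L^2(\upOmega,\FE,\PP;\HS)$, and the second is exactly the random variable that Algorithm~\ref{algo:2} requires to be in $L^2(\upOmega,\FE,\PP;\RR)$. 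For \textup{(ii)}, $\lambda_{\nS}\in L^\infty(\upOmega,\FE,\PP;\RPP)$ makes $\EE\lambda_{\nS}$ a finite strictly positive real, so $\delta_{\nS}=(2\EE\lambda_{\nS})\varepsilon_{\nS}$ is a positive scalar multiple of $\varepsilon_{\nS}\in\mathfrak{C}(\upOmega,\FE,\PP;\HS)$ and therefore is again a $\RP$-valued Carath\'eodory integrand satisfying \eqref{e:c}.

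For \textup{(iii)}, $\lambda_{\nS}$ is independent of $\upsigma(\{x_{\mathsf{0}},\ldots,x_{\nS},d_{\nS}\})$, hence of $\upsigma(\{\|d_{\nS}\|_{\HS}^2\}\cup\{x_{\mathsf{0}},\ldots,x_{\nS}\})$, so Lemma~\ref{l:6} applies with $\eta=\lambda_{\nS}(2-\lambda_{\nS})$, $\xi=\|d_{\nS}\|_{\HS}^2$, and $\upPhi=\{x_{\mathsf{0}},\ldots,x_{\nS}\}$, giving $\EC{\lambda_{\nS}(2-\lambda_{\nS})\|d_{\nS}\|_{\HS}^2}{\XX_{\nS}}=\EE(\lambda_{\nS}(2-\lambda_{\nS}))\,\EC{\|d_{\nS}\|_{\HS}^2}{\XX_{\nS}}$; this is $\geq 0$ because the first factor is $\geq 0$ by the hypothesis of Algorithm~\ref{algo:4} and the second is the conditional expectation of a nonnegative variable.

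The crux is \textup{(iv)}, driven by the pointwise identity $\scal{\mathsf{z}+d_{\nS}-x_{\nS}}{d_{\nS}}_{\HS}=\scal{\mathsf{z}}{\alpha_{\nS}t^*_{\nS}}_{\HS}-\alpha_{\nS}\eta_{\nS}$: on $[\alpha_{\nS}=0]$ both sides vanish, while on its complement the definition of $\alpha_{\nS}$ gives $\alpha_{\nS}\|t^*_{\nS}\|_{\HS}^2=\scal{x_{\nS}}{t^*_{\nS}}_{\HS}-\eta_{\nS}$, and substituting this into $\scal{\mathsf{z}+d_{\nS}-x_{\nS}}{d_{\nS}}_{\HS}=\alpha_{\nS}\scal{\mathsf{z}-x_{\nS}}{t^*_{\nS}}_{\HS}+\alpha_{\nS}^2\|t^*_{\nS}\|_{\HS}^2$ collapses the right side to $\alpha_{\nS}\scal{\mathsf{z}}{t^*_{\nS}}_{\HS}-\alpha_{\nS}\eta_{\nS}$. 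A Cauchy--Schwarz estimate against the $L^2$ hypothesis of Algorithm~\ref{algo:2} shows $\alpha_{\nS}\eta_{\nS}\in L^1(\upOmega,\FE,\PP;\RR)$, so all the conditional expectations below are legitimate. Because $\scal{\mathsf{z}+d_{\nS}-x_{\nS}}{d_{\nS}}_{\HS}$ is $\upsigma(\{x_{\mathsf{0}},\ldots,x_{\nS},d_{\nS}\})$-measurable and $\lambda_{\nS}$ is independent of that $\upsigma$-algebra, Lemma~\ref{l:6} factors it as $\EC{\lambda_{\nS}\scal{\mathsf{z}+d_{\nS}-x_{\nS}}{d_{\nS}}_{\HS}}{\XX_{\nS}}=\EE\lambda_{\nS}\,\EC{\scal{\mathsf{z}+d_{\nS}-x_{\nS}}{d_{\nS}}_{\HS}}{\XX_{\nS}}$; then the identity, linearity of conditional expectation, and Lemma~\ref{l:2} (with $\mathsf{z}$ viewed as a constant in $L^2(\upOmega,\XX_{\nS},\PP;\HS)$) turn the inner conditional expectation into $\scal{\mathsf{z}}{\EC{\alpha_{\nS}t^*_{\nS}}{\XX_{\nS}}}_{\HS}-\EC{\alpha_{\nS}\eta_{\nS}}{\XX_{\nS}}$, which the outer-approximation inequality of Algorithm~\ref{algo:2} bounds by $\varepsilon_{\nS}(\cdot,\mathsf{z})\;\Pas$ Multiplying by $\EE\lambda_{\nS}\geq 0$ gives $\EC{\lambda_{\nS}\scal{\mathsf{z}+d_{\nS}-x_{\nS}}{d_{\nS}}_{\HS}}{\XX_{\nS}}\leq\varepsilon_{\nS}(\cdot,\mathsf{z})\EE\lambda_{\nS}=\delta_{\nS}(\cdot,\mathsf{z})/2$, which is \textup{(iv)}. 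I expect the only genuine care to be in the indicator-function bookkeeping behind the pointwise identity and in verifying the $L^1$/$L^2$ integrability that licenses Lemmas~\ref{l:6} and~\ref{l:2}.
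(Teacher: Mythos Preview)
Your proof is correct and follows essentially the same approach as the paper's. The only cosmetic difference is that you establish the pointwise identity $\scal{\mathsf{z}+d_{\nS}-x_{\nS}}{d_{\nS}}_{\HS}=\scal{\mathsf{z}}{\alpha_{\nS}t^*_{\nS}}_{\HS}-\alpha_{\nS}\eta_{\nS}$ directly, whereas the paper first records the equivalent identity $\alpha_{\nS}\eta_{\nS}=\scal{x_{\nS}}{d_{\nS}}_{\HS}-\|d_{\nS}\|_{\HS}^2$ (their \eqref{e:ye7}) and then substitutes it after applying the outer-approximation inequality; the two routes are algebraically the same rearrangement, and the uses of Lemmas~\ref{l:6} and~\ref{l:2} coincide.
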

\begin{proof}
Let $(x_{\nS})_{\nnn}$ be the sequence generated by 
Algorithm~\ref{algo:4}. Let us first show by induction that it is a
well-defined sequence in $L^2(\upOmega,\FE,\PP;\HS)$. By 
assumption, $x_{\mathsf{0}}\in L^2(\upOmega,\FE,\PP;\HS)$.
Fix $\nnn$ and note that $d_{\nS}$ is measurable as a combination 
of measurable functions. Additionally, \eqref{e:a2} yields
\begin{align}
\dfrac{1}{2}\EE\|d_{\nS}\|_{\HS}^2
&=\dfrac{1}{2}\EE\|\alpha_{\nS}t_{\nS}^*\|_{\HS}^2
\nonumber\\
&\leq\dfrac{1}{2}\EE\Biggl\|\dfrac{\mathsf{1}_{[t_{\nS}^*\neq0]}
\mathsf{1}_{
\left[\scal{x_{\nS}}{t_{\nS}^*}_{\HS}>\eta_{\nS}\right]}
\bigl(\scal{x_{\nS}}{t_{\nS}^*}_{\HS}-\eta_{\nS}\bigr)}
{\|t_{\nS}^*\|_{\HS}^2+\mathsf{1}_{[t_{\nS}^*=0]}}\,
t_{\nS}^*\Biggr\|_{\HS}^2\nonumber\\ 
&=\dfrac{1}{2}\EE\Biggl|\dfrac{\mathsf{1}_{[t_{\nS}^*\neq0]}
\mathsf{1}_{\left[\scal{x_{\nS}}{t_{\nS}^*}_{\HS}>\eta_{\nS}\right]
}\bigl(\scal{x_{\nS}}{t_{\nS}^*}_{\HS}-\eta_{\nS}\bigr)}
{\|t_{\nS}^*\|_{\HS}+\mathsf{1}_{[t_{\nS}^*=0]}}\Biggr|^2
\nonumber\\
&\leq\EE\Biggl|\dfrac{\mathsf{1}_{[t_{\nS}^*\neq0]}\mathsf{1}_{
\left[\scal{x_{\nS}}{t_{\nS}^*}_{\HS}>\eta_{\nS}\right]}
\scal{x_{\nS}}{t_{\nS}^*}_{\HS}}
{\|t_{\nS}^*\|_{\HS}+\mathsf{1}_{[t_{\nS}^*=0]}}\Biggr|^2
+\EE\Biggl|\dfrac{\mathsf{1}_{[t_{\nS}^*\neq0]}\mathsf{1}_{
\left[\scal{x_{\nS}}{t_{\nS}^*}_{\HS}>\eta_{\nS}\right]}\eta_{\nS}}
{\|t_{\nS}^*\|_{\HS}+\mathsf{1}_{[t_{\nS}^*=0]}}\Biggr|^2
\nonumber\\
&\leq\EE\Biggl|\dfrac{\|x_{\nS}\|_{\HS}\,\|t_{\nS}^*\|_{\HS}}
{\|t_{\nS}^*\|_{\HS}+\mathsf{1}_{[t_{\nS}^*=0]}}\Biggr|^2
+\EE\Biggl|\dfrac{\mathsf{1}_{[t_{\nS}^*\neq0]}\mathsf{1}_{
\left[\scal{x_{\nS}}{t_{\nS}^*}_{\HS}>\eta_{\nS}\right]}\eta_{\nS}}
{\|t_{\nS}^*\|_{\HS}+\mathsf{1}_{[t_{\nS}^*=0]}}\Biggr|^2
\nonumber\\
&\leq\EE\|x_{\nS}\|_{\HS}^2+\EE\Biggl|
\dfrac{\mathsf{1}_{[t_{\nS}^*\neq0]}\mathsf{1}_{
\left[\scal{x_{\nS}}{t_{\nS}^*}_{\HS}>\eta_{\nS}\right]}\eta_{\nS}}
{\|t_{\nS}^*\|_{\HS}+\mathsf{1}_{[t_{\nS}^*=0]}}\Biggr|^2
\nonumber\\
&<\pinf.
\label{e:999}
\end{align}
Thus, $d_{\nS}\in L^2(\upOmega,\FE,\PP;\HS)$ and, since 
$\lambda_{\nS}\in L^\infty(\upOmega,\FE,\PP;\RPP)$, 
$x_{\nS+1}=x_{\nS}-\lambda_{\nS}d_{\nS}\in 
L^2(\upOmega,\FE,\PP;\HS)$, which completes the induction argument.
The fact that $\lambda_{\nS}\in L^\infty(\upOmega,\FE,\PP;\RPP)$
also guarantees the integrability of $\lambda_{\nS}$ and 
$\lambda_{\nS}(2-\lambda_{\nS})$. Further, since $\lambda_{\nS}$ is
independent of $\upsigma(\{x_{\mathsf{0}},\ldots,x_{\nS},d_{\nS}\})$ and
$\EE\brk{\lambda_{\nS}(2-\lambda_{\nS})}\geq0$, it follows from
Lemma~\ref{l:6} that
\begin{equation}
\EC1{\lambda_{\nS}(2-\lambda_{\nS})\norm{d_{\nS}}_{\HS}^2}
{\XX_{\nS}}
=\EE\brk1{\lambda_{\nS}(2-\lambda_{\nS})}
\EC1{\norm{d_{\nS}}_{\HS}^2}{\XX_{\nS}}
\geq 0\;\;\Pas
\end{equation}
Next, we infer from \eqref{e:a2} that
\begin{equation}
\label{e:ye7}
(\forall\nnn)\quad\alpha_{\nS}\eta_{\nS}
=\scal{x_{\nS}}{\alpha_{\nS}t_{\nS}^*}_{\HS}
-\alpha_{\nS}^2\|t_{\nS}^*\|_{\HS}^2
=\scal{x_{\nS}}{d_{\nS}}_{\HS}-\|d_{\nS}\|_{\HS}^2,
\end{equation}
which shows that 
$\alpha_{\nS}\eta_{\nS}\in L^1(\upOmega,\FE,\PP;\RR)$.
Now set $\delta_{\nS}=2\varepsilon_{\nS}
\EE\lambda_{\nS}\in\mathfrak{C}(\upOmega,\FE,\PP;\HS)$ and 
let $\mathsf{z}\in\ZS$. Then we deduce from \eqref{e:a2}, 
Lemma~\ref{l:2}, and \eqref{e:ye7} that
\begin{align}
\label{e:ye9}
(\forall\nnn)\quad
\EC1{\scal{\mathsf{z}}{d_{\nS}}_{\HS}}{\XX_{\nS}}
&=\scal{\mathsf{z}}{\EC{\alpha_{\nS}t_{\nS}^*}{\XX_{\nS}}}_{\HS}
\nonumber\\
&\leq\EC{\alpha_{\nS}\eta_{\nS}}{\XX_{\nS}}+\varepsilon_{\nS}
(\cdot,\mathsf{z})\nonumber\\
&=\EC1{\scal{x_{\nS}}{d_{\nS}}_{\HS}
-\|d_{\nS}\|_{\HS}^2}{\XX_{\nS}}+\varepsilon_{\nS}
(\cdot,\mathsf{z})\;\;\Pas
\end{align}
Finally, we derive from \eqref{e:ye9} and Lemma~\ref{l:6} that
\begin{align}
\label{e:t129}
(\forall\nnn)\quad
\EC1{\lambda_{\nS}\scal{\mathsf{z}+d_{\nS}-x_{\nS}}{d_{\nS}}_{\HS}}
{\XX_{\nS}}
&=
\EC1{\scal{\mathsf{z}+d_{\nS}-x_{\nS}}{d_{\nS}}_{\HS}}{\XX_{\nS}}
\EE\lambda_{\nS}\nonumber\\
&=\EC1{\scal{\mathsf{z}}{d_{\nS}}_{\HS}
+\|d_{\nS}\|_{\HS}^2-\scal{x_{\nS}}{d_{\nS}}_{\HS}}{\XX_{\nS}}
\EE\lambda_{\nS}\nonumber\\
&\leq\varepsilon_{\nS}(\cdot,\mathsf{z})\EE\lambda_{\nS}
\nonumber\\
&=\dfrac{\delta_{\nS}(\cdot,\mathsf{z})}{2}\;\;\Pas,
\end{align}
which yields the claim. 
\end{proof}

The asymptotic behavior of Algorithm~\ref{algo:4} is our next
topic. We leverage Proposition~\ref{p:1} and Theorems~\ref{t:3} and
\ref{t:3.5} to obtain the following properties.

\begin{theorem}
\label{t:2}
Let $(x_{\nS})_{\nnn}$ be the sequence generated by
Algorithm~\ref{algo:4}. 
\begin{enumerate}
\item
Suppose that, for every $\mathsf{z}\in\ZS$,
$\sum_{\nnn}\varepsilon_{\nS}(\cdot,\mathsf{z})
\EE\lambda_{\nS}<\pinf\;\Pas$ 
Then the following hold:
\begin{enumerate}
\item
\label{t:2ivd}
$\sum_{\nnn}\EE(\lambda_{\nS}\left(2-\lambda_{\nS}\right))
\EC{\|d_{\nS}\|_{\HS}^2}{\XX_{\nS}}<\pinf\;\Pas$
\item
\label{t:2ive}
Suppose that 
$\inf_{\nnn}\EE(\lambda_{\nS}(2-\lambda_{\nS}))>0$ and
there exists $\uprho\in\left[1,\pinf\right[$ such that 
$\sup_{\nnn}\lambda_{\nS}<\uprho$ $\Pas$ Then
$\sum_{\nnn}\EC{\|x_{\nS+1}-x_{\nS}\|_{\HS}^2}{\XX_{\nS}}<\pinf\;
\Pas$
and $\sum_{\nnn}\|x_{\nS+1}-x_{\nS}\|_{\HS}^2<\pinf\;\Pas$
\item
\label{t:2ivf}
Suppose that $\mathfrak{W}(x_{\nS})_{\nnn}\subset\ZS\;\Pas$ 
Then $(x_{\nS})_{\nnn}$ converges weakly $\Pas$ to a 
$\ZS$-valued random variable.
\item
\label{t:2ivg}
Suppose that 
$\mathfrak{S}(x_{\nS})_{\nnn}\cap\ZS\neq\emp\;\Pas$ 
Then $(x_{\nS})_{\nnn}$ converges strongly $\Pas$ to a
$\ZS$-valued random variable.
\item
\label{t:2ivh}
Suppose that $\mathfrak{S}(x_{\nS})_{\nnn}\neq\emp\;\Pas$
and that $\mathfrak{W}(x_{\nS})_{\nnn}\subset\ZS\;\Pas$ Then 
$(x_{\nS})_{\nnn}$ converges strongly $\Pas$ to a $\ZS$-valued 
random variable.
\end{enumerate}
\item
Suppose that, for every 
$\zS\in\ZS$,
$\sum_{\nnn}\EE\varepsilon_{\nS}(\cdot,\zS)\EE\lambda_{\nS}<\pinf$.
Then the following hold:
\begin{enumerate}
\item
\label{t:2viic}
$\sum_{\nnn}\EE(\lambda_{\nS}(2-\lambda_{\nS}))
\EE{\|d_{\nS}\|_{\HS}^2}<\pinf$.
\item
\label{t:2viie}
Suppose that 
$\inf_{\nnn}\EE(\lambda_{\nS}(2-\lambda_{\nS}))>0$ and
there exists $\uprho\in\left[1,\pinf\right[$ such that 
$\sup_{\nnn}\lambda_{\nS}<\uprho$ $\Pas$ Then
$\sum_{\nnn}\EE{\|x_{\nS+1}-x_{\nS}\|_{\HS}^2}<\pinf$.
\item
\label{t:2weak}
Suppose that $\mathfrak{W}(x_{\nS})_{\nnn}\subset\ZS\;\Pas$ 
Then $(x_{\nS})_{\nnn}$ converges weakly $\Pas$ and weakly in
$L^2(\upOmega,\FE,\PP;\HS)$ to a random variable
$x\in L^2(\upOmega,\FE,\PP;\ZS)$.
\item
\label{t:2viig}
Suppose that $\mathfrak{S}(x_{\nS})_{\nnn}\cap\ZS\neq\emp\;\Pas$ 
Then $(x_{\nS})_{\nnn}$ converges strongly $\Pas$ and strongly in
$L^1(\upOmega,\FE,\PP;\HS)$ to a random variable 
$x\in L^2(\upOmega,\FE,\PP;\ZS)$. Additionally, 
$(x_{\nS})_{\nnn}$ converges weakly in $L^2(\upOmega,\FE,\PP;\HS)$
to $x$.
\item
\label{t:2viih}
Suppose that $\ZS$ is convex, that, for every $\nnn$,
$\varepsilon_{\nS}$ is constant with respect to the $\HS$-variable,
and that $\varliminf\EE{\dS_{\ZS}^2(x_{\nS})}=0$. Then 
$(x_{\nS})_{\nnn}$ converges strongly in 
$L^2(\upOmega,\FE,\PP;\HS)$ and $\Pas$ to a $\ZS$-valued random 
variable.
\item
\label{t:2viiI}
Suppose that $\ZS$ is convex, that, for every $\nnn$,
$\varepsilon_{\nS}$ is constant with respect to the $\HS$-variable,
and that there exists $\upchi\in\zeroun$ such that 
\begin{equation}
(\forall\nnn)\quad\EC1{\dS_{\ZS}^2(x_{\nS+1})}{\XX_{\nS}}
\leq\upchi \dS_{\ZS}^2(x_{\nS})+2\varepsilon_{\nS}\EE\lambda_{\nS}
\;\;\Pas
\end{equation}
Set, for every $\nnn$ and for every $\upomega\in\upOmega$,
$\vartheta_{\nS}(\upomega)=\varepsilon_{\nS}(\upomega,\mathsf{0})$.
Then the following are satisfied:
\begin{enumerate}[label=\emph{[\Alph*]}]
\item
\label{t:2viiI1}
Let $\nnn$. Then $\EE\dS_{\ZS}^2(x_{\nS+1})\leq\upchi^{\nS+1}\EE
\dS_{\ZS}^2(x_{\mathsf{0}})+2\sum_{\jS=0}^{\nS}\upchi^{\nS-\jS}
\EE\vartheta_{\jS}\EE\lambda_{\jS}$.
\item
\label{t:2viiI2}
There exists $x\in L^2(\upOmega,\FE,\PP;\ZS)$ such 
that $(x_{\nS})_{\nnn}$ converges strongly in 
$L^2(\upOmega,\FE,\PP;\HS)$ and $\Pas$ to $x$, and
\begin{equation}
(\forall\nnn)\quad \EE\|x_{\nS}-x\|_{\HS}^2\leq4\upchi^{\nS}
\EE\dS^2_{\ZS}(x_{\mathsf{0}})+
8\sum_{\jS=0}^{\nS-1}\upchi^{\nS-\jS-1}
\EE\vartheta_{\jS}\EE\lambda_{\jS}+4\sum_{\jS\geq\nS}
\EE\vartheta_{\jS}\EE\lambda_{\jS}.
\end{equation}
\end{enumerate}
\end{enumerate}
\end{enumerate}
\end{theorem}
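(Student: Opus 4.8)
The plan is to derive every assertion from Proposition~\ref{p:1}, which exhibits Algorithm~\ref{algo:4} as the instance of Algorithm~\ref{algo:3} with $\delta_{\nS}=2\varepsilon_{\nS}\EE\lambda_{\nS}$, together with Theorems~\ref{t:3} and \ref{t:3.5}. The first step is to translate the standing hypotheses. Since $\EE\lambda_{\nS}$ is a deterministic nonnegative scalar, $\sum_{\nnn}\varepsilon_{\nS}(\cdot,\mathsf{z})\EE\lambda_{\nS}<\pinf$ $\Pas$ is exactly $\sum_{\nnn}\delta_{\nS}(\cdot,\mathsf{z})<\pinf$ $\Pas$, and $\sum_{\nnn}\EE\varepsilon_{\nS}(\cdot,\mathsf{z})\EE\lambda_{\nS}<\pinf$ is exactly $\sum_{\nnn}\EE\delta_{\nS}(\cdot,\mathsf{z})<\pinf$; moreover, by the monotone convergence theorem, the latter implies the former $\Pas$ Hence the hypothesis of the first part activates precisely the conclusions of Theorems~\ref{t:3} and \ref{t:3.5} that require $\sum_{\nnn}\delta_{\nS}(\cdot,\mathsf{z})<\pinf$ $\Pas$, and the hypothesis of the second part also activates those requiring $\sum_{\nnn}\EE\delta_{\nS}(\cdot,\mathsf{z})<\pinf$. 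When, in addition, $\varepsilon_{\nS}$ is constant in the $\HS$-variable, so is $\delta_{\nS}$, with $\delta_{\nS}(\cdot,\mathsf{0})=2\varepsilon_{\nS}(\cdot,\mathsf{0})\EE\lambda_{\nS}=2\vartheta_{\nS}\EE\lambda_{\nS}$, and since $\ZS\neq\emp$ the constancy forces $\varepsilon_{\nS}(\cdot,\mathsf{0})=\varepsilon_{\nS}(\cdot,\zS)$ for any $\zS\in\ZS$, so $\sum_{\nnn}\EE\vartheta_{\nS}\EE\lambda_{\nS}<\pinf$ under the second hypothesis.

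The recurring analytic ingredient is the independence of $\lambda_{\nS}$ from $\upsigma(\{x_{\mathsf{0}},\dots,x_{\nS},d_{\nS}\})$: Lemma~\ref{l:6} gives $\EC{\lambda_{\nS}(2-\lambda_{\nS})\|d_{\nS}\|_{\HS}^{2}}{\XX_{\nS}}=\EE(\lambda_{\nS}(2-\lambda_{\nS}))\,\EC{\|d_{\nS}\|_{\HS}^{2}}{\XX_{\nS}}$ and, after taking expectations, $\EE(\lambda_{\nS}(2-\lambda_{\nS})\|d_{\nS}\|_{\HS}^{2})=\EE(\lambda_{\nS}(2-\lambda_{\nS}))\,\EE\|d_{\nS}\|_{\HS}^{2}$. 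Substituting the first identity into Theorem~\ref{t:3}\ref{t:3ivd} yields \ref{t:2ivd}, and the second into Theorem~\ref{t:3}\ref{t:3viic} yields \ref{t:2viic}. Items \ref{t:2ivf}--\ref{t:2ivh} are read off from Theorem~\ref{t:3}\ref{t:3ivf}--\ref{t:3ivh}; item \ref{t:2weak} follows by combining Theorem~\ref{t:3}\ref{t:3ivf} (weak $\Pas$ convergence to a $\ZS$-valued $x$) with Theorem~\ref{t:3}\ref{t:3weak}; item \ref{t:2viig} by combining Theorem~\ref{t:3}\ref{t:3ivg} with Theorem~\ref{t:3}\ref{t:3viig}; and in both cases the limit, being $\ZS$-valued and square-integrable, lies in $L^{2}(\upOmega,\FE,\PP;\ZS)$. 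Items \ref{t:2viih} and \ref{t:2viiI} follow from Theorem~\ref{t:3.5}\ref{t:3viih}--\ref{t:3viiI} applied to the present instance, whose tolerance $\delta_{\nS}(\cdot,\mathsf{0})$ equals $2\vartheta_{\nS}\EE\lambda_{\nS}$; propagating the identity $\EE[\delta_{\jS}(\cdot,\mathsf{0})]=2\EE\vartheta_{\jS}\EE\lambda_{\jS}$ through the estimates of Theorem~\ref{t:3.5}\ref{t:3viiI1}--\ref{t:3viiI2} reproduces \ref{t:2viiI1}--\ref{t:2viiI2}.

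The only item requiring a genuine argument is \ref{t:2ive} (and its expectation analogue \ref{t:2viie}). Set $\upbeta=\inf_{\nnn}\EE(\lambda_{\nS}(2-\lambda_{\nS}))>0$. From \ref{t:2ivd} and $\EE(\lambda_{\nS}(2-\lambda_{\nS}))\,\EC{\|d_{\nS}\|_{\HS}^{2}}{\XX_{\nS}}\ge\upbeta\,\EC{\|d_{\nS}\|_{\HS}^{2}}{\XX_{\nS}}\ge0$ we get $\sum_{\nnn}\EC{\|d_{\nS}\|_{\HS}^{2}}{\XX_{\nS}}<\pinf$ $\Pas$ Then $\|x_{\nS+1}-x_{\nS}\|_{\HS}^{2}=\lambda_{\nS}^{2}\|d_{\nS}\|_{\HS}^{2}$, the bound $\lambda_{\nS}<\uprho$ $\Pas$, and Lemma~\ref{l:6} give $\EC{\|x_{\nS+1}-x_{\nS}\|_{\HS}^{2}}{\XX_{\nS}}=\EE(\lambda_{\nS}^{2})\,\EC{\|d_{\nS}\|_{\HS}^{2}}{\XX_{\nS}}\le\uprho^{2}\,\EC{\|d_{\nS}\|_{\HS}^{2}}{\XX_{\nS}}$, hence $\sum_{\nnn}\EC{\|x_{\nS+1}-x_{\nS}\|_{\HS}^{2}}{\XX_{\nS}}<\pinf$ $\Pas$ To pass to $\sum_{\nnn}\|x_{\nS+1}-x_{\nS}\|_{\HS}^{2}<\pinf$ $\Pas$ I would localize: with $Y_{\nS}=\|x_{\nS+1}-x_{\nS}\|_{\HS}^{2}$, which is nonnegative and integrable because $x_{\nS},x_{\nS+1}\in L^{2}(\upOmega,\FE,\PP;\HS)$, and its compensator $A_{\nS}=\sum_{\jS<\nS}\EC{Y_{\jS}}{\XX_{\jS}}$, the time $\tau_{k}=\inf\{\nS\in\NN\mid A_{\nS+1}>k\}$ is a stopping time for $(\XX_{\nS})_{\nnn}$ with $A_{\tau_{k}}\le k$, so $\EE\sum_{\jS<\tau_{k}}Y_{\jS}=\EE A_{\tau_{k}}\le k<\pinf$ and thus $\sum_{\jS<\tau_{k}}Y_{\jS}<\pinf$ $\Pas$; since $\{\sum_{\nnn}\EC{Y_{\nS}}{\XX_{\nS}}<\pinf\}=\bigcup_{k\in\NN}\{\tau_{k}=\pinf\}$ has probability $1$, it follows that $\sum_{\nnn}Y_{\nS}<\pinf$ $\Pas$ For \ref{t:2viie} no localization is needed: $\EE\|x_{\nS+1}-x_{\nS}\|_{\HS}^{2}=\EE(\lambda_{\nS}^{2})\,\EE\|d_{\nS}\|_{\HS}^{2}\le\uprho^{2}\EE\|d_{\nS}\|_{\HS}^{2}$ and $\sum_{\nnn}\EE\|d_{\nS}\|_{\HS}^{2}<\pinf$, the latter from \ref{t:2viic} and $\upbeta>0$.

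The main obstacle is bookkeeping rather than ideas. One must carry the factor $2$ and the scalars $\EE\lambda_{\jS}$ faithfully along the chain Proposition~\ref{p:1} $\to$ Theorem~\ref{t:3.5}\ref{t:3viiI1}--\ref{t:3viiI2} so that \ref{t:2viiI1}--\ref{t:2viiI2} come out with exactly the coefficients $1$, $2$, $4$, $8$ displayed, and one must check that the localization in \ref{t:2ive} rests only on the $\XX_{\nS}$-measurability of the compensator $A_{\nS+1}$, which is what makes $\tau_{k}$ a stopping time for $(\XX_{\nS})_{\nnn}$. Every other item is an immediate transcription through Proposition~\ref{p:1}.
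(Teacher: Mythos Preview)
Your proposal is correct and follows the same overall strategy as the paper: reduce via Proposition~\ref{p:1} to Algorithm~\ref{algo:3} with $\delta_{\nS}=2\varepsilon_{\nS}\EE\lambda_{\nS}$, then read off each assertion from the corresponding item of Theorem~\ref{t:3} or Theorem~\ref{t:3.5}, using Lemma~\ref{l:6} to factor $\EC{\lambda_{\nS}(2-\lambda_{\nS})\|d_{\nS}\|_{\HS}^{2}}{\XX_{\nS}}$ and its expectation.

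The one place where you diverge is the passage from $\sum_{\nnn}\EC{\|x_{\nS+1}-x_{\nS}\|_{\HS}^{2}}{\XX_{\nS}}<\pinf$ $\Pas$ to $\sum_{\nnn}\|x_{\nS+1}-x_{\nS}\|_{\HS}^{2}<\pinf$ $\Pas$ in \ref{t:2ive}. The paper applies Lemma~\ref{l:4}\ref{l:4i} (Robbins--Siegmund) to the partial sums $\sum_{\kS=0}^{\nS}\|x_{\kS+1}-x_{\kS}\|_{\HS}^{2}$, whereas you use a stopping-time localization of the compensator. Both arguments are standard and correct; yours is self-contained, while the paper's reuses a lemma already invoked elsewhere. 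Also, where the paper bounds $\|x_{\nS+1}-x_{\nS}\|_{\HS}^{2}\leq\uprho^{2}\|d_{\nS}\|_{\HS}^{2}$ pointwise before conditioning, you instead factor $\EC{\lambda_{\nS}^{2}\|d_{\nS}\|_{\HS}^{2}}{\XX_{\nS}}=\EE(\lambda_{\nS}^{2})\,\EC{\|d_{\nS}\|_{\HS}^{2}}{\XX_{\nS}}$ via Lemma~\ref{l:6}; this is equally valid given the independence assumption in Algorithm~\ref{algo:4}.
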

\begin{proof}
In view of Proposition~\ref{p:1}, we appeal to Theorems~\ref{t:3}
and \ref{t:3.5} to establish the claims.

\ref{t:2ivd}: It follows from Theorem~\ref{t:3}\ref{t:3ivd} and
Lemma~\ref{l:6} that
\begin{equation}
\sum_{\nnn}\EE\brk1{\lambda_{\nS}\left(2-\lambda_{\nS}\right)}
\EC1{\|d_{\nS}\|_{\HS}^2}{\XX_{\nS}}
=\sum_{\nnn}
\EC1{\lambda_{\nS}\left(2-\lambda_{\nS}\right)\|d_{\nS}\|_{\HS}^2}
{\XX_{\nS}}<\pinf\;\Pas
\end{equation}

\ref{t:2ivd}$\Rightarrow$\ref{t:2ive}: It follows from
\eqref{e:a2} that 
\begin{equation}
\label{e:2233}
\sum_{\nnn}\EE\brk1{\lambda_{\nS}(2-\lambda_{\nS})}
\EC3{\frac{1}{\lambda_{\nS}^2}\|x_{\nS+1}-x_{\nS}\|_{\HS}^2}
{\XX_{\nS}}
=\sum_{\nnn}\EE\brk1{\lambda_{\nS}(2-\lambda_{\nS})}
\EC1{\|d_{\nS}\|_{\HS}^2}{\XX_{\nS}}
<\pinf\;\Pas
\end{equation}
Hence, the assumption 
$\inf_{\nnn}\EE\brk1{\lambda_{\nS}(2-\lambda_{\nS})}>0$ yields 
$\sum_{\nnn}\EC1{
\|x_{\nS+1}-x_{\nS}\|_{\HS}^2/\lambda_{\nS}^2}{\XX_{\nS}}
<\pinf\;\Pas$ Further, 
\begin{equation}
(\forall\nnn)\quad0<\frac{1}{\uprho^2}
\leq\frac{1}{\lambda_{\nS}^2}\;\;\Pas
\end{equation} 
Thus,
\begin{equation}
\label{e:654}
\sum_{\nnn}\EC1{\|x_{\nS+1}-x_{\nS}\|_{\HS}^2}{\XX_{\nS}}
<\pinf\;\Pas
\end{equation} 
In addition, 
\begin{equation}
(\forall\nnn)\quad
\EC4{\sum_{\kS=0}^{\nS+1}\norm{x_{\kS+1}-x_{\kS}}_{\HS}^2}
{\XX_{\nS+1}}
=\sum_{\kS=0}^{\nS}\norm{x_{\kS+1}-x_{\kS}}_{\HS}^2
+\EC1{\norm{x_{\nS+2}-x_{\nS+1}}_{\HS}^2}{\XX_{\nS+1}}\;\;\Pas
\end{equation}
It then follows from \eqref{e:654} and Lemma~\ref{l:4}\ref{l:4i}
that $(\sum_{\kS=0}^{\nS}\norm{x_{\kS+1}-x_{\kS}}_{\HS}^2)_{\nnn}$ 
converges $\Pas$ to a $\RP$-valued random variable, hence 
$\sum_{\nnn}\norm{x_{\nS+1}-x_{\nS}}_{\HS}^2<\pinf\;\Pas$

\ref{t:2ivf}--\ref{t:2ivh}: These follow from 
Theorem~\ref{t:3}\ref{t:3ivf}--\ref{t:3ivh}.

\ref{t:2viic}: It follows from Theorem~\ref{t:3}\ref{t:3viic} and
Lemma~\ref{l:6} that
\begin{align}
\sum_{\nnn}\EE\brk1{\lambda_{\nS}(2-\lambda_{\nS})}
\EE\norm{d_{\nS}}_{\HS}^2
&=\sum_{\nnn}\EE\brk1{\lambda_{\nS}(2-\lambda_{\nS})}
\EE\brk2{\EC1{\norm{d_{\nS}}_{\HS}^2}{\XX_{\nS}}}\nonumber\\
&=\sum_{\nnn}\EE\brk2{\EE\brk1{\lambda_{\nS}(2-\lambda_{\nS})}
\EC1{\norm{d_{\nS}}_{\HS}^2}{\XX_{\nS}}}\nonumber\\
&=\sum_{\nnn}\EE\brk2{\EC2{\lambda_{\nS}(2-\lambda_{\nS})
\norm{d_{\nS}}_{\HS}^2}{\XX_{\nS}}}\nonumber\\
&=\sum_{\nnn}\EE\brk2{\lambda_{\nS}(2-\lambda_{\nS})
\norm{d_{\nS}}_{\HS}^2}\nonumber\\
&<\pinf.
\end{align}
Hence $\sum_{\nnn}\EE\brk{\lambda_{\nS}(2-\lambda_{\nS})}
\EE\norm{d_{\nS}}_{\HS}^2<\pinf$. 

\ref{t:2viic}$\Rightarrow$\ref{t:2viie}:
It follows from \eqref{e:a1} that 
\begin{equation}
\label{e:2234}
\sum_{\nnn}\EE\brk1{\lambda_{\nS}(2-\lambda_{\nS})}\EE\brk3{
\frac{1}{\lambda_{\nS}^2}\|x_{\nS+1}-x_{\nS}\|_{\HS}^2}
=\sum_{\nnn}\EE\brk1{\lambda_{\nS}(2-\lambda_{\nS})}
\EE\|d_{\nS}\|_{\HS}^2<\pinf.
\end{equation}
Thus, as in \ref{t:2ive}, 
$\sum_{\nnn}\EE\|x_{\nS+1}-x_{\nS}\|_{\HS}^2<\pinf$. 

\ref{t:2weak}--\ref{t:2viig}: These follow from
\ref{t:2ivf}--\ref{t:2ivg} and
Theorem~\ref{t:3}\ref{t:3weak}--\ref{t:3viig}.

\ref{t:2viih}--\ref{t:2viiI}: These follow from
Theorem~\ref{t:3.5}\ref{t:3viih}--\ref{t:3viiI}.
\end{proof}

\subsection{A stochastic algorithm with random
relaxations bounded by 2}
\label{sec:33}

We present an implementation of Algorithm~\ref{algo:2} with an 
alternative relaxation strategy.

\begin{algorithm}
\label{algo:5}
In Algorithm~\ref{algo:2}, for every $\nnn$, 
$\varepsilon_{\nS}\in\mathfrak{C}(\upOmega,\FE,\PP;\HS)$ and
$\lambda_{\nS}\in 
L^\infty(\upOmega,\XX_{\nS},\PP;\left]0,2\right[)$.
\end{algorithm}

\begin{proposition}
\label{p:2}
Algorithm~\ref{algo:5} is a special case of 
Algorithm~\ref{algo:3} where, for every $\nnn$,
$\delta_{\nS}=2\lambda_{\nS}\varepsilon_{\nS}$.
\end{proposition}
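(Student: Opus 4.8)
The plan is to reproduce the proof of Proposition~\ref{p:1}, modifying only the two places where the particular structure of the relaxation parameters was used.

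First I would show that $(x_{\nS})_{\nnn}$ is a well-defined sequence in $L^2(\upOmega,\FE,\PP;\HS)$. This step is identical to the one in Proposition~\ref{p:1}: the estimate \eqref{e:999} bounding $\EE\|d_{\nS}\|_{\HS}^2$ uses only \eqref{e:a2} together with the integrability conditions inherited from Algorithm~\ref{algo:2}, and since $\lambda_{\nS}\in L^\infty(\upOmega,\XX_{\nS},\PP;\left]0,2\right[)\subset L^\infty(\upOmega,\FE,\PP;\RPP)$, the update $x_{\nS+1}=x_{\nS}-\lambda_{\nS}d_{\nS}$ stays in $L^2(\upOmega,\FE,\PP;\HS)$. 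I would also record that $\delta_{\nS}=2\lambda_{\nS}\varepsilon_{\nS}$ belongs to $\mathfrak{C}(\upOmega,\FE,\PP;\HS)$: because $\varepsilon_{\nS}$ is a nonnegative Carath\'eodory integrand and $\lambda_{\nS}$ is a nonnegative $\FE$-measurable random variable with $\lambda_{\nS}<2$ $\Pas$, the map $(\upomega,\xS)\mapsto 2\lambda_{\nS}(\upomega)\varepsilon_{\nS}(\upomega,\xS)$ is $\RP$-valued, continuous in $\xS$ for $\PP$-almost every $\upomega$, $\FE$-measurable in $\upomega$ for every $\xS$, and satisfies $\int_{\upOmega}2\lambda_{\nS}(\upomega)\varepsilon_{\nS}(\upomega,x(\upomega))\PP(d\upomega)\leq 4\int_{\upOmega}\varepsilon_{\nS}(\upomega,x(\upomega))\PP(d\upomega)<\pinf$ for every $x\in L^2(\upOmega,\FE,\PP;\HS)$ by \eqref{e:c}.

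It remains to check the two defining conditions of Algorithm~\ref{algo:3}. For the first, since $\lambda_{\nS}\in\left]0,2\right[$ $\Pas$, the random variable $\lambda_{\nS}(2-\lambda_{\nS})\|d_{\nS}\|_{\HS}^2$ is nonnegative $\Pas$, hence so is $\EC1{\lambda_{\nS}(2-\lambda_{\nS})\|d_{\nS}\|_{\HS}^2}{\XX_{\nS}}$; in contrast with Proposition~\ref{p:1}, no independence hypothesis is needed here. For the second, the key point is that $\lambda_{\nS}$ is now $\XX_{\nS}$-measurable and therefore factors out of the conditional expectation given $\XX_{\nS}$. Proceeding exactly as in the proof of Proposition~\ref{p:1}, identity \eqref{e:ye9} together with the linearity of conditional expectation gives, for every $\mathsf{z}\in\ZS$, the inequality $\EC1{\scal{\mathsf{z}+d_{\nS}-x_{\nS}}{d_{\nS}}_{\HS}}{\XX_{\nS}}\leq\varepsilon_{\nS}(\cdot,\mathsf{z})$ $\Pas$; multiplying by $\lambda_{\nS}\geq 0$ and using the pull-out property yields
\begin{equation*}
\EC1{\lambda_{\nS}\scal{\mathsf{z}+d_{\nS}-x_{\nS}}{d_{\nS}}_{\HS}}{\XX_{\nS}}
=\lambda_{\nS}\EC1{\scal{\mathsf{z}+d_{\nS}-x_{\nS}}{d_{\nS}}_{\HS}}{\XX_{\nS}}
\leq\lambda_{\nS}\varepsilon_{\nS}(\cdot,\mathsf{z})
=\frac{\delta_{\nS}(\cdot,\mathsf{z})}{2}\;\;\Pas,
\end{equation*}
which is precisely the bound demanded in \eqref{e:a1}. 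The computations of the first two paragraphs are routine (the first being verbatim from Proposition~\ref{p:1}); the only genuinely new ingredient, and the point to be careful about, is that here the $\XX_{\nS}$-measurability and nonnegativity of $\lambda_{\nS}$ take over the role played in Proposition~\ref{p:1} by the independence of $\lambda_{\nS}$ and the appeal to Lemma~\ref{l:6}.
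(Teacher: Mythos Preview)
Your proposal is correct and follows essentially the same approach as the paper: reduce to the verification in Proposition~\ref{p:1}, check $\delta_{\nS}=2\lambda_{\nS}\varepsilon_{\nS}\in\mathfrak{C}(\upOmega,\FE,\PP;\HS)$ via the bound $\lambda_{\nS}<2$, observe that $\lambda_{\nS}\in\left]0,2\right[\;\Pas$ makes $\lambda_{\nS}(2-\lambda_{\nS})\|d_{\nS}\|_{\HS}^2\geq 0$, and obtain the main inequality by pulling the $\XX_{\nS}$-measurable factor $\lambda_{\nS}$ out of the conditional expectation in \eqref{e:ye9}. Your explicit remark that $\XX_{\nS}$-measurability and nonnegativity of $\lambda_{\nS}$ replace the independence assumption and Lemma~\ref{l:6} of Proposition~\ref{p:1} is exactly the point.
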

\begin{proof}
Set $(\forall\nnn)$ $\delta_{\nS}=2\lambda_{\nS}\varepsilon_{\nS}$.
Following the proof of Proposition~\ref{p:1}, it is enough to show
that 
\begin{equation}
(\forall\nnn)\quad
\begin{cases}
\delta_{\nS}\in\mathfrak{C}(\upOmega,\FE,\PP;\HS);\\[2mm]
\EC1{\lambda_{\nS}(2-\lambda_{\nS})\norm{d_{\nS}}_{\HS}^2}
{\XX_{\nS}}\geq 0\;\;\Pas;\\[2mm]
(\forall\mathsf{z}\in\ZS)\;\;
\EC1{\lambda_{\nS}\scal{\mathsf{z}+d_{\nS}-x_{\nS}}{d_{\nS}}_{\HS}}
{\XX_{\nS}}
\leq\delta_{\nS}(\cdot,\mathsf{z})/2\;\;\Pas
\end{cases}
\end{equation}
Let $\nnn$. It follows from the measurability of $\lambda_{\nS}$
and the fact that 
$\varepsilon_{\nS}\in\mathfrak{C}(\upOmega,\FE,\PP;\HS)$ that 
$\delta_{\nS}\colon\upOmega\times\HS\to\RP$ is a Carath\'eodory 
integrand. Furthermore, for every $z\in L^2(\upOmega,\FE,\PP;\ZS)$,
\begin{equation}
\EE\abs{\delta_{\nS}(\cdot,z)}
=\EE\brk{2\lambda_{\nS}\varepsilon_{\nS}(\cdot,z)}
<4\EE\varepsilon_{\nS}(\cdot,z)<\pinf,
\end{equation}
which shows that 
$\delta_{\nS}\in\mathfrak{C}(\upOmega,\FE,\PP;\HS)$.
Next, since $\lambda_{\nS}\in\left]0,2\right[\;\Pas$, we have 
$\lambda_{\nS}(2-\lambda_{\nS})>0\;\Pas$ and hence
\begin{equation}
\EC1{\lambda_{\nS}(2-\lambda_{\nS})\norm{d_{\nS}}_{\HS}^2}
{\XX_{\nS}}\geq0\;\;\Pas
\end{equation}
Finally, let $\mathsf{z}\in\ZS$. It then follows from 
\eqref{e:ye9} and the fact that 
$\lambda_{\nS}$ is positive and $\XX_{\nS}$-measurable that
\begin{multline}
\EC1{\lambda_{\nS}\scal{\mathsf{z}+d_{\nS}-x_{\nS}}{d_{\nS}}_{\HS}}
{\XX_{\nS}}\\
=\lambda_{\nS}\EC1{\scal{\mathsf{z}}{d_{\nS}}_{\HS}
+\|d_{\nS}\|_{\HS}^2-\scal{x_{\nS}}{d_{\nS}}_{\HS}}{\XX_{\nS}}
\leq\lambda_{\nS}\varepsilon_{\nS}(\cdot,\mathsf{z})
=\dfrac{\delta_{\nS}(\cdot,\mathsf{z})}{2}\;\;\Pas,
\end{multline}
which completes the proof.
\end{proof}

As in Section~\ref{sec:32}, we can derive weak, strong, and
linear convergence results from Theorems~\ref{t:3} and \ref{t:3.5}.
For brevity, we provide below only the weak convergence results 
but, as in Theorem~\ref{t:2}, strong and linear convergence results
can also be obtained.

\begin{theorem}
\label{t:5}
Let $(x_{\nS})_{\nnn}$ be the sequence generated by
Algorithm~\ref{algo:5}. Suppose that, for every $\mathsf{z}\in\ZS$,
$\sum_{\nnn}\lambda_{\nS}\varepsilon_{\nS}(\cdot,\mathsf{z})
<\pinf\;\Pas$
and that $\mathfrak{W}(x_{\nS})_{\nnn}\subset\ZS\;\Pas$ 
Then $(x_{\nS})_{\nnn}$ converges weakly $\Pas$ to a 
$\ZS$-valued random variable $x$. If, in addition, for every 
$\zS\in\ZS$,
$\sum_{\nnn}\EE\brk{\lambda_{\nS}\varepsilon_{\nS}(\cdot,\zS)}
<\pinf$, then
$x\in L^2(\upOmega,\FE,\PP;\HS)$ and $(x_{\nS})_{\nnn}$ converges 
weakly in $L^2(\upOmega,\FE,\PP;\HS)$ to $x$.
\end{theorem}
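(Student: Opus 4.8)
The plan is to read everything off Proposition~\ref{p:2}, which already exhibits Algorithm~\ref{algo:5} as the instance of Algorithm~\ref{algo:3} with $\delta_{\nS}=2\lambda_{\nS}\varepsilon_{\nS}$ for every $\nnn$, and then to invoke the appropriate parts of Theorem~\ref{t:3}. In particular, Proposition~\ref{p:2} guarantees that $(x_{\nS})_{\nnn}$ is a well-defined sequence in $L^2(\upOmega,\FE,\PP;\HS)$ and that the two conditional inequalities required in \eqref{e:a1} hold, so Theorem~\ref{t:3} applies verbatim to the sequence generated by Algorithm~\ref{algo:5}.

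First I would translate the standing summability hypothesis. Since $\delta_{\nS}(\cdot,\mathsf{z})=2\lambda_{\nS}\varepsilon_{\nS}(\cdot,\mathsf{z})$, the assumption that $\sum_{\nnn}\lambda_{\nS}\varepsilon_{\nS}(\cdot,\mathsf{z})<\pinf\;\Pas$ for every $\mathsf{z}\in\ZS$ is, up to the harmless factor $2$, precisely the hypothesis $\sum_{\nnn}\delta_{\nS}(\cdot,\mathsf{z})<\pinf\;\Pas$ under which item~(v) of Theorem~\ref{t:3} is stated. Combining this with the assumption $\mathfrak{W}(x_{\nS})_{\nnn}\subset\ZS\;\Pas$, Theorem~\ref{t:3}\ref{t:3ivf} immediately gives that $(x_{\nS})_{\nnn}$ converges weakly $\Pas$ to a $\ZS$-valued random variable $x$, which is the first assertion.

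For the $L^2$ statement I would argue similarly: $\EE\delta_{\nS}(\cdot,\zS)=2\,\EE(\lambda_{\nS}\varepsilon_{\nS}(\cdot,\zS))$, so the extra hypothesis $\sum_{\nnn}\EE(\lambda_{\nS}\varepsilon_{\nS}(\cdot,\zS))<\pinf$ for every $\zS\in\ZS$ is exactly the hypothesis $\sum_{\nnn}\EE\delta_{\nS}(\cdot,\zS)<\pinf$ of item~(vii) of Theorem~\ref{t:3}. Since we have just shown that $(x_{\nS})_{\nnn}$ converges weakly $\Pas$ to $x$, Theorem~\ref{t:3}\ref{t:3weak} applies and yields $x\in L^2(\upOmega,\FE,\PP;\HS)$ together with weak convergence of $(x_{\nS})_{\nnn}$ to $x$ in $L^2(\upOmega,\FE,\PP;\HS)$. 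There is no real obstacle here; the only point that deserves a line of verification is that Proposition~\ref{p:2} genuinely places us inside the hypotheses of Theorem~\ref{t:3} and that multiplying the tolerances by $2$ affects neither the $\Pas$ summability nor the summability in expectation. As the remark closing Section~\ref{sec:33} indicates, the strong and linear convergence counterparts would be obtained in the same mechanical fashion by invoking the corresponding items of Theorems~\ref{t:3} and \ref{t:3.5}.
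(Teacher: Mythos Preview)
Your proposal is correct and follows exactly the same route as the paper: invoke Proposition~\ref{p:2} to identify $\delta_{\nS}=2\lambda_{\nS}\varepsilon_{\nS}$, then apply Theorem~\ref{t:3}\ref{t:3ivf} for the almost sure weak convergence and Theorem~\ref{t:3}\ref{t:3weak} for the $L^2$ weak convergence. The only slip is cosmetic: the expectation summability hypothesis sits in item~(vi) of Theorem~\ref{t:3}, not item~(vii).
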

\begin{proof}
In view of Proposition~\ref{p:2}, the claim follows
Theorem~\ref{t:3}\ref{t:3ivf} and \ref{t:3}\ref{t:3weak}.
\end{proof}

\section{Randomly relaxed Krasnosel'ski\u\i--Mann iterations}
\label{sec:4}

Let us first recall some definitions about an operator
$\TS\colon\HS\to\HS$ \cite[Chapter~4]{Livre1}. First,
$\TS\colon\HS\to\HS$ is
nonexpansive if it is $1$-Lipschitzian and $\upalpha$-averaged for 
some $\upalpha\in\zeroun$ if $\Id+\upalpha^{-1}(\TS-\Id)$ is 
nonexpansive \cite{Bail78}. On the other hand, 
$\TS$ is $\upbeta$-cocoercive for some $\upbeta\in\RPP$ if 
\begin{equation}
(\forall\xS\in\HS)(\forall\mathsf{y}\in\HS)\quad
\scal{\xS-\mathsf{y}}{\mathsf{T}\xS-\mathsf{T}\mathsf{y}}_{\HS}
\geq\upbeta\norm{\mathsf{T}\xS-\mathsf{T}\mathsf{y}}_{\HS}^2
\end{equation}
and it is firmly nonexpansive if it is $1$-cocoercive.

The Krasnosel'ski\u\i--Mann iterative process is a basic algorithm
to construct fixed points of nonexpansive operators
\cite{Livre1,Dong22,Groe72,Kras55,Mann53,Reic79}. We propose a
study of its asymptotic behavior in a novel environment featuring
random relaxations and stochastic errors.

\begin{theorem}
\label{t:25}
Let $\TS\colon\HS\to\HS$ be a nonexpansive operator such that
$\Fix\TS\neq\emp$ and let 
$x_{\mathsf{0}}\in L^2(\upOmega,\FE,\PP;\HS)$. Iterate
\begin{equation}
\label{e:p25}
\begin{array}{l}
\textup{for}\;\nS=0,1,\ldots\\
\left\lfloor
\begin{array}{l}
\textup{take}\;e_{\nS}\in L^2(\upOmega,\FE,\PP;\HS)\;
\textup{and}\;
\mu_{\nS}\in L^\infty(\upOmega,\FE,\PP;\left]0,1\right[)\\
x_{\nS+1}=x_{\nS}+\mu_{\nS}\brk1{\TS x_{\nS}+e_{\nS}-x_{\nS}}.
\end{array}
\right.\\
\end{array}
\end{equation}
Set $(\forall\nnn)$ $\upPhi_{\nS}=\{x_{\mathsf{0}},\dots,x_{\nS}\}$
and $\XX_{\nS}=\upsigma(\upPhi_{\nS})$.
Suppose that 
$\sum_{\nnn}\EE\brk{\mu_{\nS}(1-\mu_{\nS})}=\pinf$ and, for every
$\nnn$, $\mu_{\nS}$ is independent of
$\upsigma(\{e_{\nS}\}\cup\upPhi_{\nS})$. Then the following hold 
for some $\Fix\TS$-valued random variable $x$:
\begin{enumerate}
\item
Suppose that 
$\EC{\norm{e_{\nS}}_{\HS}^2}{\XX_{\nS}}\to\mathsf{0}\;\Pas$ and
$\sum_{\nnn}\EE\mu_{\nS}\sqrt{\EC{\norm{e_{\nS}}_{\HS}^2}
{\XX_{\nS}}}<\pinf\;\Pas$ Then the following hold:
\begin{enumerate}
\item
\label{t:25ia-}
$(\TS x_{\nS}-x_{\nS})_{\nnn}$ converges strongly $\Pas$ to
$\mathsf{0}$.
\item
\label{t:25ia}
$(x_{\nS})_{\nnn}$ converges weakly $\Pas$ to $x$.
\item
\label{t:25ib}
Suppose that $\TS-\Id$ is demiregular at every point in $\Fix\TS$.
Then $(x_{\nS})_{\nnn}$ converges strongly $\Pas$ to $x$.
\end{enumerate}
\item
\label{t:25ii}
Suppose that $\EE\norm{e_{\nS}}^2_{\HS}\to\mathsf{0}$ and
$\sum_{\nnn}\sqrt{\EE\mu_{\nS}^2\EE\norm{e_{\nS}}_{\HS}^2}<\pinf$.
Then the following hold:
\begin{enumerate}
\item
\label{t:25ic-}
$(\TS x_{\nS}-x_{\nS})_{\nnn}$ converges strongly in
$L^1(\upOmega,\FE,\PP;\HS)$ and strongly $\Pas$ to $\mathsf{0}$.
\item
\label{t:25ic}
$x\in L^2(\upOmega,\FE,\PP;\Fix\TS)$ and $(x_{\nS})_{\nnn}$
converges weakly in $L^2(\upOmega,\FE,\PP;\HS)$ and weakly $\Pas$
to $x$.
\item
\label{t:25id}
Suppose that $\TS-\Id$ is demiregular at every point in $\Fix\TS$. 
Then $(x_{\nS})_{\nnn}$ converges strongly in
$L^1(\upOmega,\FE,\PP;\HS)$ and strongly $\Pas$ to $x$.
\end{enumerate}
\end{enumerate}
\end{theorem}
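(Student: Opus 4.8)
The plan is to realize \eqref{e:p25} as an instance of Algorithm~\ref{algo:3} with solution set $\ZS=\Fix\TS$, relaxations $\lambda_{\nS}=2\mu_{\nS}\in L^\infty(\upOmega,\FE,\PP;\RPP)$, directions $d_{\nS}=(x_{\nS}-\TS x_{\nS}-e_{\nS})/2$, and tolerances $\delta_{\nS}\colon(\upomega,\mathsf{y})\mapsto 2(\EE\mu_{\nS})(\|\mathsf{y}-x_{\nS}(\upomega)\|_{\HS}\varrho_{\nS}(\upomega)+\varrho_{\nS}(\upomega)^2/2)$, where $\varrho_{\nS}=\sqrt{\EC{\|e_{\nS}\|_{\HS}^2}{\XX_{\nS}}}$. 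First I would check that $x_{\nS}\in L^2(\upOmega,\FE,\PP;\HS)$ by induction, whence $d_{\nS}\in L^2(\upOmega,\FE,\PP;\HS)$ because $\|\TS x_{\nS}\|_{\HS}\leq\|x_{\nS}-\mathsf{z}\|_{\HS}+\|\mathsf{z}\|_{\HS}$ for $\mathsf{z}\in\Fix\TS$, and that $\delta_{\nS}\in\mathfrak{C}(\upOmega,\FE,\PP;\HS)$ since $\EE\varrho_{\nS}^2=\EE\|e_{\nS}\|_{\HS}^2<\pinf$ and Cauchy--Schwarz handles \eqref{e:c}. Since $\mu_{\nS}$ is valued in $\left]0,1\right[$, $\lambda_{\nS}(2-\lambda_{\nS})=4\mu_{\nS}(1-\mu_{\nS})>0$, which gives the first requirement in \eqref{e:a1} and yields $\lambda_{\nS}(2-\lambda_{\nS})\|d_{\nS}\|_{\HS}^2=\mu_{\nS}(1-\mu_{\nS})\|\TS x_{\nS}+e_{\nS}-x_{\nS}\|_{\HS}^2$. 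For the second requirement I would fix $\mathsf{z}\in\Fix\TS$, expand $\lambda_{\nS}\scal{\mathsf{z}+d_{\nS}-x_{\nS}}{d_{\nS}}_{\HS}$, use the inequality $\scal{x_{\nS}-\mathsf{z}}{x_{\nS}-\TS x_{\nS}}_{\HS}\geq\|\TS x_{\nS}-x_{\nS}\|_{\HS}^2/2$ (a consequence of $\|\TS x_{\nS}-\mathsf{z}\|_{\HS}\leq\|x_{\nS}-\mathsf{z}\|_{\HS}$) to bound it by $\mu_{\nS}\scal{\TS x_{\nS}-\mathsf{z}}{e_{\nS}}_{\HS}+\mu_{\nS}\|e_{\nS}\|_{\HS}^2/2$, and then take $\EC{\cdot}{\XX_{\nS}}$, invoking Lemma~\ref{l:6} (independence of $\mu_{\nS}$), Lemma~\ref{l:2}, the conditional Jensen inequality $\|\EC{e_{\nS}}{\XX_{\nS}}\|_{\HS}\leq\varrho_{\nS}$, and again $\|\TS x_{\nS}-\mathsf{z}\|_{\HS}\leq\|x_{\nS}-\mathsf{z}\|_{\HS}$; this produces exactly $\delta_{\nS}(\cdot,\mathsf{z})/2$.

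Granting the identification, Theorem~\ref{t:3}\ref{t:3i} supplies, for every $\mathsf{z}\in\Fix\TS$,
\[\EC{\|x_{\nS+1}-\mathsf{z}\|_{\HS}^2}{\XX_{\nS}}+\EC{\mu_{\nS}(1-\mu_{\nS})\|\TS x_{\nS}+e_{\nS}-x_{\nS}\|_{\HS}^2}{\XX_{\nS}}\leq\|x_{\nS}-\mathsf{z}\|_{\HS}^2+\delta_{\nS}(\cdot,\mathsf{z})\;\;\Pas\]
and the arithmetic--geometric bound $2(\EE\mu_{\nS})\|x_{\nS}-\mathsf{z}\|_{\HS}\varrho_{\nS}\leq(\EE\mu_{\nS})\varrho_{\nS}\|x_{\nS}-\mathsf{z}\|_{\HS}^2+(\EE\mu_{\nS})\varrho_{\nS}$ rewrites this as \eqref{e:l1} of Lemma~\ref{l:1} with $\mathsf{p}=2$, multiplier $(\EE\mu_{\nS})\varrho_{\nS}$, and residual $(\EE\mu_{\nS})(\varrho_{\nS}+\varrho_{\nS}^2)$; both are $\Pas$ summable under the hypotheses (using $\varrho_{\nS}^2\to\mathsf{0}$ $\Pas$ in part~(i), and a Tonelli argument together with Cauchy--Schwarz in part~\ref{t:25ii}). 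Lemma~\ref{l:1} then gives at once that $(\|x_{\nS}\|_{\HS})_{\nnn}$ is bounded $\Pas$, that $\WC(x_{\nS})_{\nnn}\neq\emp$ $\Pas$, that $(\|x_{\nS}-\mathsf{z}\|_{\HS})_{\nnn}$ converges $\Pas$ for every $\mathsf{z}\in\Fix\TS$, and that $\sum_{\nnn}\EC{\mu_{\nS}(1-\mu_{\nS})\|\TS x_{\nS}+e_{\nS}-x_{\nS}\|_{\HS}^2}{\XX_{\nS}}<\pinf$ $\Pas$

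The heart of the argument is \ref{t:25ia-}. By Lemma~\ref{l:6} the last series equals $\sum_{\nnn}\EE(\mu_{\nS}(1-\mu_{\nS}))\EC{\|\TS x_{\nS}+e_{\nS}-x_{\nS}\|_{\HS}^2}{\XX_{\nS}}$; adding the $\Pas$ finite series $\sum_{\nnn}\EE(\mu_{\nS}(1-\mu_{\nS}))\varrho_{\nS}$ and recalling $\sum_{\nnn}\EE(\mu_{\nS}(1-\mu_{\nS}))=\pinf$, a comparison forces $\varliminf(\EC{\|\TS x_{\nS}+e_{\nS}-x_{\nS}\|_{\HS}^2}{\XX_{\nS}}+\varrho_{\nS})=\mathsf{0}$ $\Pas$; since $\EC{\|\TS x_{\nS}+e_{\nS}-x_{\nS}\|_{\HS}^2}{\XX_{\nS}}\geq\|\TS x_{\nS}-x_{\nS}\|_{\HS}^2-2\|\TS x_{\nS}-x_{\nS}\|_{\HS}\varrho_{\nS}$ and $(\|\TS x_{\nS}-x_{\nS}\|_{\HS})_{\nnn}$ is bounded $\Pas$, this gives $\varliminf\|\TS x_{\nS}-x_{\nS}\|_{\HS}=\mathsf{0}$ $\Pas$ Separately, \eqref{e:p25} and the nonexpansiveness of $\TS$ yield $\|\TS x_{\nS+1}-x_{\nS+1}\|_{\HS}\leq\|\TS x_{\nS}-x_{\nS}\|_{\HS}+2\mu_{\nS}\|e_{\nS}\|_{\HS}$ $\Pas$, so after taking $\EC{\cdot}{\XX_{\nS}}$ (Lemma~\ref{l:6} and conditional Jensen) Lemma~\ref{l:4}\ref{l:4i} shows $(\|\TS x_{\nS}-x_{\nS}\|_{\HS})_{\nnn}$ converges $\Pas$; combined with the preceding $\varliminf$, $\TS x_{\nS}-x_{\nS}\to\mathsf{0}$ $\Pas$, which is \ref{t:25ia-}. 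The demiclosedness of $\Id-\TS$ at $\mathsf{0}$ \cite[Corollary~4.28]{Livre1} then yields $\WC(x_{\nS})_{\nnn}\subset\Fix\TS$ $\Pas$, so Lemma~\ref{l:1}\ref{l:1iv} produces a $\Fix\TS$-valued $x$ with $x_{\nS}\weakly x$ $\Pas$, i.e., \ref{t:25ia}; if $\TS-\Id$ is demiregular on $\Fix\TS$, applying Definition~\ref{d:10} $\upomega$-wise at $x(\upomega)$ (with $x_{\nS}\weakly x$ and $(\TS-\Id)x_{\nS}\to\mathsf{0}=(\TS-\Id)x$) gives $x_{\nS}\to x$ $\Pas$, i.e., \ref{t:25ib}.

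For part~\ref{t:25ii} I would take expectations in the two displayed estimates (now with $\EE[\|x_{\nS}-\mathsf{z}\|_{\HS}\varrho_{\nS}]\leq\|x_{\nS}-\mathsf{z}\|_{L^2(\upOmega,\FE,\PP;\HS)}\sqrt{\EE\|e_{\nS}\|_{\HS}^2}$ and $\EE\mu_{\nS}\leq\sqrt{\EE\mu_{\nS}^2}$), which brings $(\|x_{\nS}-\mathsf{z}\|_{L^2(\upOmega,\FE,\PP;\HS)}^2)_{\nnn}$ under Corollary~\ref{l:5}\ref{l:5i}; hence $(x_{\nS})_{\nnn}$ is bounded in $L^2(\upOmega,\FE,\PP;\HS)$ and $\sum_{\nnn}\EE(\mu_{\nS}(1-\mu_{\nS}))\EE\|\TS x_{\nS}+e_{\nS}-x_{\nS}\|_{\HS}^2<\pinf$. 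With this $L^2$ bound one gets $\sum_{\nnn}\EE\delta_{\nS}(\cdot,\mathsf{z})<\pinf$ for every $\mathsf{z}\in\Fix\TS$, so the expectation statements of Theorem~\ref{t:3} become available; repeating the asymptotic-regularity argument at the level of $L^1$ norms (a $\Pas$ monotonicity estimate, Corollary~\ref{l:5}\ref{l:5i} for $(\EE\|\TS x_{\nS}-x_{\nS}\|_{\HS})_{\nnn}$, and $\varliminf\EE\|\TS x_{\nS}-x_{\nS}\|_{\HS}^2=\mathsf{0}$) gives $\TS x_{\nS}-x_{\nS}\to\mathsf{0}$ in $L^1(\upOmega,\FE,\PP;\HS)$, which together with part~(i) is \ref{t:25ic-}. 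Then \ref{t:25ic} follows from $\WC(x_{\nS})_{\nnn}\subset\Fix\TS$ $\Pas$, Theorem~\ref{t:3}\ref{t:3ivf}, and Theorem~\ref{t:3}\ref{t:3weak}, and \ref{t:25id} follows from \ref{t:25ib} and Theorem~\ref{t:3}\ref{t:3viig}. I expect the genuine obstacle to be \ref{t:25ia-}: the quasi-Fej\'er machinery only delivers $\varliminf\|\TS x_{\nS}-x_{\nS}\|_{\HS}=\mathsf{0}$, and promoting this to a true limit forces one to extract the auxiliary monotonicity estimate $\|\TS x_{\nS+1}-x_{\nS+1}\|_{\HS}\leq\|\TS x_{\nS}-x_{\nS}\|_{\HS}+2\mu_{\nS}\|e_{\nS}\|_{\HS}$ and run a second Robbins--Siegmund-type argument on it, all while keeping the conditional-expectation bookkeeping for the error terms consistent.
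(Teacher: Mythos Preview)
Your proposal is correct and follows essentially the same route as the paper. The only packaging differences are: (a) the paper casts \eqref{e:p25} as an instance of Algorithm~\ref{algo:4} (via the half-space data $t^*_{\nS},\eta_{\nS},\alpha_{\nS},\varepsilon_{\nS}$ and the firmly nonexpansive operator $(\TS+\Id)/2$) and then invokes Proposition~\ref{p:1} and Theorem~\ref{t:2}, whereas you go directly to Algorithm~\ref{algo:3} with $d_{\nS},\delta_{\nS}$ and appeal to Theorem~\ref{t:3}/Lemma~\ref{l:1}; and (b) the paper derives separate $L^1$ and $L^2$ Fej\'er inequalities \eqref{e:km1}--\eqref{e:FejL2} by hand, while you force the quasi-Fej\'er form of Lemma~\ref{l:1} with the bound $2a\leq a^2+1$. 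In both proofs the decisive step for \ref{t:25ia-} is exactly what you isolate: the divergent series $\sum_{\nnn}\EE(\mu_{\nS}(1-\mu_{\nS}))=\pinf$ only yields $\varliminf\|\TS x_{\nS}-x_{\nS}\|_{\HS}=0$, and one must combine this with the auxiliary estimate $\|\TS x_{\nS+1}-x_{\nS+1}\|_{\HS}\leq\|\TS x_{\nS}-x_{\nS}\|_{\HS}+2\mu_{\nS}\|e_{\nS}\|_{\HS}$ and a Robbins--Siegmund argument (Lemma~\ref{l:4}\ref{l:4i}) to upgrade to a true limit; this is precisely the paper's \eqref{e:tn}.
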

\begin{proof}
Let us show that the sequence $(x_{\nS})_{\nnn}$
constructed by \eqref{e:p25} corresponds to a sequence generated by
Algorithm~\ref{algo:4}. To see this, set $\ZS=\Fix\TS$ and observe
that, since $\TS$ is nonexpansive, 
\begin{equation}
\brk1{\forall\nnn}\brk1{\forall z\in L^2(\upOmega,\FE,\PP;\ZS)}
\quad\EE\|\TS x_{\nS}-z\|_{\HS}^2\leq\EE\|x_{\nS}-z\|_{\HS}^2.
\end{equation}
Thus if, for some $\nnn$, $x_{\nS}\in L^2(\upOmega,\FE,\PP;\HS)$,
then $\TS x_{\nS}\in L^2(\upOmega,\FE,\PP;\HS)$ and \eqref{e:p25}
yields $x_{\nS+1}\in L^2(\upOmega,\FE,\PP;\HS)$. This shows by
induction that $(x_{\nS})_{\nnn}$ and $(\TS x_{\nS})_{\nnn}$ lie in
$L^2(\upOmega,\FE,\PP;\HS)$. Let
us define
\begin{equation}
\label{e:cl}
(\forall\nnn)\quad
\begin{cases}
t_{\nS}^*=\dfrac{x_{\nS}-\TS x_{\nS}-e_{\nS}}{2}\in 
L^2(\upOmega,\FE,\PP;\HS);\\[3mm]
\eta_{\nS}=
\dfrac{\norm{x_{\nS}}_{\HS}^2-\norm{\TS
x_{\nS}+e_{\nS}}_{\HS}^2}{4}\in L^1(\upOmega,\FE,\PP;\RR);\\
\alpha_{\nS}=\mathsf{1}_{[t_{\nS}^*\neq\mathsf{0}]};\\
(\forall\zS\in\ZS)\;\varepsilon_{\nS}(\cdot,\zS)
=\dfrac{1}{4}\EC1{\norm{e_{\nS}}_{\HS}^2}{\XX_{\nS}}
+\dfrac{1}{2}
\norm{x_{\nS}-\mathsf{z}}_{\HS}
\sqrt{\EC1{\norm{e_{\nS}}_{\HS}^2}{\XX_{\nS}}};\\
d_{\nS}=t_{\nS}^*;\\
\lambda_{\nS}=2\mu_{\nS}\in\left]0,2\right[\;\Pas
\end{cases}
\end{equation}
Now set $\mathsf{F}=(\TS+\Id)/2$. Since $\TS$ is nonexpansive,
$\mathsf{F}$ is firmly nonexpansive (see
\cite[Proposition~4.4]{Livre1} or
\cite[Proposition~1.11.2]{Goeb84}.
Hence, we deduce from Lemma~\ref{l:2} and \eqref{e:cl} that, for
every $\zS\in\ZS$ and every $\nnn$, 
\begin{align}
\scal1{\mathsf{z}}{\EC{\alpha_{\nS}t_{\nS}^*}{\XX_{\nS}}}_{\HS}
&=\EC3{\scal2{\mathsf{z}}{x_{\nS}-\mathsf{F}x_{\nS}
-\frac{1}{2}e_{\nS}}_{\HS}}{\XX_{\nS}}
\nonumber\\
&=\scal1{\mathsf{z}}{x_{\nS}-\mathsf{F}x_{\nS}}_{\HS}
-\frac{1}{2}\EC1{\scal{\mathsf{z}}{e_{\nS}}_{\HS}}{\XX_{\nS}}
\nonumber\\
&\leq\scal1{\mathsf{F}x_{\nS}}
{x_{\nS}-\mathsf{F}x_{\nS}}_{\HS}
-\frac{1}{2}\EC1{\scal{\mathsf{z}}{e_{\nS}}_{\HS}}{\XX_{\nS}}
\nonumber\\
&=\EC{\alpha_{\nS}\eta_{\nS}}{\XX_{\nS}}
+\frac{1}{4}\EC1{\norm{e_{\nS}}_{\HS}^2}{\XX_{\nS}}
+\frac{1}{2}\EC1{\scal{\TS
x_{\nS}-\mathsf{z}}{e_{\nS}}_{\HS}}{\XX_{\nS}}\nonumber\\
&\leq\EC{\alpha_{\nS}\eta_{\nS}}{\XX_{\nS}}
+\frac{1}{4}\EC1{\norm{e_{\nS}}_{\HS}^2}{\XX_{\nS}}
+\frac{1}{2}\norm{\TS x_{\nS}-\mathsf{z}}_{\HS}
\EC1{\norm{e_{\nS}}_{\HS}}{\XX_{\nS}}\nonumber\\
&\leq\EC{\alpha_{\nS}\eta_{\nS}}{\XX_{\nS}}
+\frac{1}{4}\EC1{\norm{e_{\nS}}_{\HS}^2}{\XX_{\nS}}
+\frac{1}{2}\norm{x_{\nS}-\mathsf{z}}_{\HS}
\EC1{\norm{e_{\nS}}_{\HS}}{\XX_{\nS}}\nonumber\\
&\leq\EC{\alpha_{\nS}\eta_{\nS}}{\XX_{\nS}}+
\varepsilon_{\nS}(\cdot,\zS)\;\;\Pas
\end{align}
Next, we observe that, for every $\nnn$, 
\begin{align}
\brk1{\forall z\in L^2(\upOmega,\FE,\PP;\ZS)}\quad
\EE\varepsilon_{\nS}(\cdot,z)
&=\dfrac{1}{4}\EE\norm{e_{\nS}}_{\HS}^2
+\dfrac{1}{2}\EE\brk2{
\norm{x_{\nS}-z}_{\HS}
\sqrt{\EC1{\norm{e_{\nS}}_{\HS}^2}{\XX_{\nS}}}}\nonumber\\
&\leq\dfrac{1}{4}\EE\norm{e_{\nS}}_{\HS}^2
+\dfrac{1}{2}\sqrt{\EE\norm{x_{\nS}-z}_{\HS}^2}
\sqrt{\EE{\norm{e_{\nS}}_{\HS}^2}}\label{e:Eepsn}\\
&<\pinf,
\end{align}
which shows that 
$\varepsilon_{\nS}\in\mathfrak{C}(\upOmega,\FE,\PP;\HS)$ since it
is clear from \eqref{e:cl} that $\varepsilon_{\nS}$ is a 
Carath\'eodory integrand. Furthermore, in view of
\eqref{e:cl}, \eqref{e:p25} can be written as 
\begin{equation}
(\forall\nnn)\quad x_{\nS+1}=x_{\nS}-\lambda_{\nS}d_{\nS}.
\end{equation}
On the other hand, since $(\mu_{\nS})_{\nnn}$ lies almost surely 
in $\zeroun$, we have $\EE(\lambda_{\nS}(2-\lambda_{\nS}))\geq 0$.
Additionally, for every $\nnn$, $\mu_{\nS}$ is independent of 
$\upsigma(\{e_{\nS}\}\cup\upPhi_{\nS})$ and, by \eqref{e:cl}, 
$d_{\nS}$ is $\upsigma(\{e_{\nS}\}\cup\upPhi_{\nS})$-measurable. 
Hence, $\upsigma(\{d_{\nS}\}\cup\upPhi_{\nS})\subset
\upsigma(\{e_{\nS}\}\cup\upPhi_{\nS})$
and $\lambda_{\nS}$ is independent of
$\upsigma(\{d_{\nS}\}\cup\upPhi_{\nS})$. Altogether, 
$(x_{\nS})_{\nnn}$ is a 
sequence generated by Algorithm~\ref{algo:4}. Finally, it follows 
from \eqref{e:p25} and Lemma~\ref{l:6} that
\begin{align}
(\forall\nnn)(\forall\zS\in\ZS)\quad&
\EC{\norm{x_{\nS+1}-\zS}_{\HS}}{\XX_{\nS}}\nonumber\\
&\quad\leq\EC1{(1-\mu_{\nS})\norm{x_{\nS}-\zS}_{\HS}+
\mu_{\nS}\norm{\TS x_{\nS}-\zS}_{\HS}+
\mu_{\nS}\norm{e_{\nS}}_{\HS}}{\XX_{\nS}}\nonumber\\
&\quad =(1-\EE\mu_{\nS})\norm{x_{\nS}-\zS}_{\HS}+
\EE\mu_{\nS}\norm{\TS x_{\nS}-\zS}_{\HS}+
\EE\mu_{\nS}\EC{\norm{e_{\nS}}_{\HS}}{\XX_{\nS}}\nonumber\\
&\quad\leq\norm{x_{\nS}-\zS}_{\HS}+
\EE\mu_{\nS}\EC{\norm{e_{\nS}}_{\HS}}{\XX_{\nS}}
\nonumber\\
&\quad\leq\norm{x_{\nS}-\zS}_{\HS}+
\EE\mu_{\nS}\sqrt{\EC{\norm{e_{\nS}}_{\HS}^2}{\XX_{\nS}}}\;\;\Pas
\label{e:km1}
\end{align}
Further, by invoking the nonexpansiveness of $\TS$ and the
Cauchy--Schwarz inequality, we obtain
\begin{align}
(\forall\nnn)(\forall\zS\in\ZS)\quad
&\norm{x_{\nS+1}-\zS}_{L^2(\upOmega,\FE,\PP;\HS)}\nonumber\\
&\quad=\sqrt{\EE\norm{x_{\nS+1}-\zS}_{\HS}^2}\nonumber\\
&\quad=\sqrt{\EE\norm{(1-\upmu_{\nS})(x_{\nS}-\zS)
+\upmu_{\nS}(\TS x_{\nS}-\TS\zS)+\upmu_{\nS}e_{\nS}}_{\HS}^2}
\nonumber\\
&\quad\leq\sqrt{\EE\bigl|\norm{x_{\nS}-\zS}_{\HS}+
\mu_{\nS}\norm{e_{\nS}}_{\HS}\bigr|^2}\nonumber\\
&\quad=\sqrt{\EE\norm{x_{\nS}-\zS}_{\HS}^2+
2\EE\mu_{\nS}\EE\brk{\norm{x_{\nS}-\zS}_{\HS}\norm{e_{\nS}}_{\HS}}+
\EE\mu_{\nS}^2\EE\norm{e_{\nS}}_{\HS}^2}\nonumber\\
&\quad\leq\sqrt{\EE\norm{x_{\nS}-\zS}_{\HS}^2+
2\sqrt{\EE\mu_{\nS}^2}\sqrt{\EE\norm{x_{\nS}-\zS}_{\HS}^2}
\sqrt{\norm{e_{\nS}}_{\HS}^2}+
\EE\mu_{\nS}^2\EE\norm{e_{\nS}}_{\HS}^2}\nonumber\\
&\quad=\sqrt{\biggl|\sqrt{\EE\norm{x_{\nS}-\zS}_{\HS}^2}+
\sqrt{\EE\mu_{\nS}^2\EE\norm{e_{\nS}}_{\HS}^2}\,\biggr|^2}
\nonumber\\
&\quad=\norm_{x_{\nS}-\zS}_{L^2(\upOmega,\FE,\PP;\HS)}+
\sqrt{\EE\mu_{\nS}^2\EE\norm{e_{\nS}}^2_{\HS}}.
\label{e:FejL2}
\end{align}

\ref{t:25ia-}: We derive from \eqref{e:km1} and
Lemma~\ref{l:1}\ref{l:1ii} that $(\norm{x_{\nS}}_{\HS})_{\nnn}$ is
bounded $\Pas$ Hence, for every $\zS\in\ZS$,
$\sum_{\nnn}\norm{x_{\nS}-\zS}_{\HS}\EE\mu_{\nS}
\sqrt{\EC{\norm{e_{\nS}}_{\HS}^2}{\XX_{\nS}}}<\pinf\;\Pas$ On the 
other hand, the assumptions
$\lim\EC{\norm{e_{\nS}}_{\HS}^2}{\XX_{\nS}}=\mathsf{0}$ and
$\sum_{\nnn}
\EE\mu_{\nS}\sqrt{\EC{\norm{e_{\nS}}_{\HS}^2}{\XX_{\nS}}}<\pinf\;
\Pas$ yield
\begin{equation}
\sum_{\nnn}
\EE\mu_{\nS}\EC{\norm{e_{\nS}}_{\HS}^2}{\XX_{\nS}}
<\pinf\;\;\Pas
\end{equation}
Therefore, for every $\zS\in\ZS$,
$\sum_{\nnn}
\varepsilon_{\nS}(\cdot,\zS)\EE\lambda_{\nS}
=2\sum_{\nnn}
\varepsilon_{\nS}(\cdot,\zS)\EE\mu_{\nS}<\pinf\;\Pas$
It then follows from 
Theorem~\ref{t:2}\ref{t:2ivd} and the assumption
$\sum_{\nnn}\EE\brk{\mu_{\nS}(1-\mu_{\nS})}=\pinf$ that 
$\varliminf\EC{\norm{d_{\nS}}_{\HS}^2}{\XX_{\nS}}=0\;\Pas$ 
Hence,
\begin{align} 
\label{e:fc}
0&\leq\frac{1}{2}
\varliminf\norm{\TS x_{\nS}-x_{\nS}}_{\HS}^2\nonumber\\
&\leq\varliminf\EC1{\norm{\TS
x_{\nS}+e_{\nS}-x_{\nS}}_{\HS}^2+\norm{e_{\nS}}_{\HS}^2}{\XX_{\nS}}
\nonumber\\
&=\varliminf\EC1{\norm{\TS
x_{\nS}+e_{\nS}-x_{\nS}}_{\HS}^2}{\XX_{\nS}}
+\lim\EC{\norm{e_{\nS}}_{\HS}^2}{\XX_{\nS}}\nonumber\\
&=4\varliminf\EC1{\norm{d_{\nS}}_{\HS}^2}{\XX_{\nS}}
+\lim\EC{\norm{e_{\nS}}_{\HS}^2}{\XX_{\nS}}\nonumber\\
&=0\;\;\Pas
\end{align}
Thus, Lemma~\ref{l:6} implies that, for every $\nnn$,
\begin{align}
&\EC1{\norm{\TS x_{\nS+1}-x_{\nS+1}}_{\HS}}{\XX_{\nS}}\nonumber\\
&\quad=\EC1{\|\TS x_{\nS+1}-\TS x_{\nS}+(1-\mu_{\nS})(\TS 
x_{\nS}-x_{\nS})-\mu_{\nS}e_{\nS}\|_{\HS}}{\XX_{\nS}}\nonumber\\
&\quad\leq\EC1{\norm{\TS x_{\nS+1}-\TS x_{\nS}}_{\HS}}{\XX_{\nS}}
+\EC1{(1-\mu_{\nS})\norm{\TS x_{\nS}-x_{\nS}}_{\HS}}{\XX_{\nS}}
+\EC1{\mu_{\nS}\norm{e_{\nS}}_{\HS}}{\XX_{\nS}}
\nonumber\\
&\quad\leq\EC1{\norm{x_{\nS+1}-x_{\nS}}_{\HS}}{\XX_{\nS}}
+(1-\EE\mu_{\nS})\norm{\TS x_{\nS}-x_{\nS}}_{\HS}
+\EC1{\mu_{\nS}\norm{e_{\nS}}_{\HS}}{\XX_{\nS}}\nonumber\\
&\quad=\EC1{\mu_{\nS}\norm{\TS x_{\nS}+e_{\nS}-x_{\nS}}_{\HS}}
{\XX_{\nS}}
+(1-\EE\mu_{\nS})\norm{\TS x_{\nS}-x_{\nS}}_{\HS}
+\EC1{\mu_{\nS}\norm{e_{\nS}}_{\HS}}{\XX_{\nS}}\nonumber\\
&\quad=\EC1{\mu_{\nS}\norm{\TS x_{\nS}-x_{\nS}}_{\HS}}
{\XX_{\nS}}
+(1-\EE\mu_{\nS})\norm{\TS x_{\nS}-x_{\nS}}_{\HS}
+2\EC1{\mu_{\nS}\norm{e_{\nS}}_{\HS}}{\XX_{\nS}}\nonumber\\
&\quad=\brk1{\EE{\mu_{\nS}}}\norm{\TS x_{\nS}-x_{\nS}}_{\HS}
+(1-\EE\mu_{\nS})\norm{\TS x_{\nS}-x_{\nS}}_{\HS}
+2\EE\mu_{\nS}\EC{\norm{e_{\nS}}_{\HS}}{\XX_{\nS}}\nonumber\\
&\quad\leq\norm{\TS x_{\nS}-x_{\nS}}_{\HS}
+2\EE\mu_{\nS}\sqrt{\EC{\norm{e_{\nS}}_{\HS}^2}{\XX_{\nS}}}
\;\Pas\label{e:tn}
\end{align}
Consequently, Lemma~\ref{l:4}\ref{l:4i} secures the convergence
$\Pas$ of the sequence $(\norm{\TS x_{\nS}-x_{\nS}}_{\HS})_{\nnn}$,
which, in view of \eqref{e:fc}, forces 
\begin{equation}
\label{e:fc1}
\lim\norm{\TS x_{\nS}-x_{\nS}}_{\HS}=0\;\Pas
\end{equation}

\ref{t:25ia}: Let us show that 
$\mathfrak{W}(x_{\nS})_{\nnn}\subset\ZS\;\Pas$ 
Let $\upomega\in\upOmega$ be such that
$\mathfrak{W}\brk{x_{\nS}(\upomega)}_{\nnn}\neq\emp$ and
$\lim\norm{\TS x_{\nS}(\upomega)-x_{\nS}(\upomega)}=0$. Let
$\xS\in\mathfrak{W}(x_{\nS}(\upomega))_{\nnn}$, say
$x_{\kS_{\nS}}(\upomega)\weakly\xS$. The nonexpansiveness of $\TS$ 
implies that $\Id-\TS$ is demiclosed at $\mathsf{0}$ 
\cite[Theorem~4.27]{Livre1}. In turn,
$\TS\xS=\xS$ and $\mathfrak{W}(x_{\nS}(\upomega))\subset\ZS$. Since
$\mathfrak{W}\brk{x_{\nS}}_{\nnn}\neq\emp\;\Pas$ and
$\lim\norm{\TS x_{\nS}-x_{\nS}}=0\;\Pas$, we conclude that
$\mathfrak{W}(x_{\nS})_{\nnn}\subset\ZS\;\Pas$ Thus, the claim
follows from Theorem~\ref{t:2}\ref{t:2ivf}. 

\ref{t:25ib}: By \ref{t:25ia-} and \ref{t:25ia}, there exists
$\upOmega'\in\FE$ such that $\PP(\upOmega')=1$ and 
\begin{equation}
\brk1{\forall\upomega\in\upOmega'}\quad 
\TS x_{\nS}(\upomega)-x_{\nS}(\upomega)\to\mathsf{0}\;\;
\text{and}\;\;
x_{\nS}(\upomega)\weakly x(\upomega).
\end{equation}
It then follows from the demiregularity of $\TS-\Id$ that, for \
every $\upomega\in\upOmega'$, $x_{\nS}(\upomega)\to x(\upomega)$. 
Hence, $(x_{\nS})_{\nnn}$ converges strongly $\Pas$ to $x$.

\ref{t:25ic-}: We derive from \eqref{e:FejL2} and
Corollary~\ref{l:5}\ref{l:5i} that 
$(\norm{x_{\nS}}_{L^2(\upOmega,\FE,\PP;\HS})_{\nnn}$ is bounded. 
Therefore, 
\begin{equation}
\brk1{\forall z\in L^2(\upOmega,\FE,\PP;\HS)}\quad
\sup_{\nnn}\EE\norm{x_{\nS}-z}_{\HS}^2<\pinf.
\end{equation}
In turn, for every $z\in L^2(\upOmega,\FE,\PP;\HS)$,
\begin{equation}
\sum_{\nnn}\EE\mu_{\nS}\sqrt{\EE\norm{x_{\nS}-z}_{\HS}^2}
\sqrt{\EE\norm{e_{\nS}}_{\HS}^2}
\leq\sum_{\nnn}\sqrt{\EE\norm{x_{\nS}-z}_{\HS}^2}
\sqrt{\EE\mu_{\nS}^2\EE\norm{e_{\nS}}_{\HS}^2}
<\pinf.
\end{equation}
On the other hand, since
$\lim\EE\norm{e_{\nS}}_{\HS}^2=\mathsf{0}$ and
$\sum_{\nnn}\EE\mu_{\nS}\sqrt{\EE\norm{e_{\nS}}_{\HS}^2}<\pinf$, 
we have
\begin{equation}
\sum_{\nnn}
\EE\mu_{\nS}\EE\norm{e_{\nS}}_{\HS}^2<\pinf.
\end{equation}
Altogether, we deduce from \eqref{e:Eepsn} that
\begin{equation}
\brk1{\forall\zS\in\ZS}\quad
\sum_{\nnn}\EE\varepsilon_{\nS}(\cdot,\zS)\EE\lambda_{\nS}=
2\sum_{\nnn}\EE\varepsilon_{\nS}(\cdot,\zS)\EE\mu_{\nS}<\pinf,
\end{equation}
which shows in particular that, for every $\zS\in\ZS$,
$\sum_{\nnn}\varepsilon_{\nS}(\cdot,\zS)\EE\lambda_{\nS}<\pinf\;
\Pas$ 
Thus $\lim\norm{\TS x_{\nS}-x_{\nS}}_{\HS}=0\;\Pas$ On the 
other hand, it follows from Theorem~\ref{t:2}\ref{t:2viic} and the 
assumptions that $\varliminf\EE\norm{d_{\nS}}_{\HS}^2=0$. Hence, 
proceeding as in \eqref{e:fc}, we obtain
$\varliminf\EE\norm{\TS x_{\nS}-x_{\nS}}_{\HS}^2=0$.
Moreover, taking 
expectations in \eqref{e:tn} yields
\begin{align}
(\forall\nnn)\quad
\EE\norm{\TS x_{\nS+1}-x_{\nS+1}}_{\HS}
&\leq\EE\norm{\TS x_{\nS}-x_{\nS}}_{\HS}
+2\EE\mu_{\nS}\sqrt{\EE\norm{e_{\nS}}^2_{\HS}}\nonumber\\
&\leq\EE\norm{\TS x_{\nS}-x_{\nS}}_{\HS}
+2\sqrt{\EE\mu_{\nS}^2\EE\norm{e_{\nS}}^2_{\HS}}.
\label{e:tn2}
\end{align}
It then follows from Corollary~\ref{l:5}\ref{l:5i} that
$(\EE\norm{\TS x_{\nS}-x_{\nS}}_{\HS})_{\nnn}$ converges. Since
$\varliminf\EE\norm{\TS x_{\nS}-x_{\nS}}_{\HS}^2=0$, this implies
that $\lim\EE\norm{\TS x_{\nS}-x_{\nS}}_{\HS}^2=0$. Hence,
appealing to \ref{t:25ia-}, $(\TS x_{\nS}-x_{\nS})_{\nnn}$
converges strongly in $L^1(\upOmega,\FE,\PP;\HS)$ and strongly
$\Pas$ to $\mathsf{0}$.

\ref{t:25ic}: We deduce weak convergence $\Pas$ by arguing as
in the proof of \ref{t:25ia}, while weak convergence in
$L^2(\upOmega,\FE,\PP;\HS)$ follows from
Theorem~\ref{t:2}\ref{t:2weak}.

\ref{t:25id}: As in the proof of \ref{t:25ib}, it follows from 
\ref{t:25ic} that $(x_{\nS})_{\nnn}$ converges strongly $\Pas$ to
$x$. Further, strong convergence in $L^1(\upOmega,\FE,\PP;\HS)$
follows from Theorem~\ref{t:2}\ref{t:2viig}.
\end{proof}

\begin{remark}
Theorem~\ref{t:25}\ref{t:25ii} extends
\cite[Corollary~2.7]{Siop15}, where the relaxations are only
deterministic and the weak limit is not shown to be in
$L^2(\upOmega,\FE,\PP;\HS)$. Another connected result is
\cite[Theorem~2.8]{Brav24}, which focuses on the finite-dimensional
setting (which implies that demiregularity holds
\cite[Proposition~2.4]{Sico10}) with deterministic relaxations and
the weaker summability condition
$\sum_{\nnn}\upmu_{\nS}\EC{\|e_{\nS}\|_{\HS}}{\XX_{\nS}}<\pinf$ 
\Pas\ The case 
of deterministic relaxations and deterministic errors was 
considered in \cite[Theorem~5.5(i)]{Else01}, as an extension of the
classical result error-free result of \cite[Corollary~3]{Groe72}.
\end{remark}

The following application of Theorem~\ref{t:25} concerns
averaged operators. 

\begin{corollary}
\label{c:25}
Let $\upalpha\in\zeroun$, let $\TS\colon\HS\to\HS$ be an 
$\upalpha$-averaged operator such that
$\Fix\TS\neq\emp$, and let 
$x_{\mathsf{0}}\in L^2(\upOmega,\FE,\PP;\HS)$. Iterate
\begin{equation}
\label{e:c25}
\begin{array}{l}
\textup{for}\;\nS=0,1,\ldots\\
\left\lfloor
\begin{array}{l}
\textup{take}\;e_{\nS}\in L^2(\upOmega,\FE,\PP;\HS)\;
\textup{and}\;
\mu_{\nS}\in L^\infty(\upOmega,\FE,\PP;\left]0,1/\upalpha\right[)\\
x_{\nS+1}=x_{\nS}+\mu_{\nS}\brk1{\TS x_{\nS}+e_{\nS}-x_{\nS}}.
\end{array}
\right.\\
\end{array}
\end{equation}
Set $(\forall\nnn)$ $\upPhi_{\nS}=\{x_{\mathsf{0}},\dots,x_{\nS}\}$
and $\XX_{\nS}=\upsigma(\upPhi_{\nS})$. Suppose that 
$\sum_{\nnn}\EE\brk{\mu_{\nS}(1-\upalpha\mu_{\nS})}=\pinf$ and, 
for every $\nnn$, that $\mu_{\nS}$ is independent of
$\upsigma(\{e_{\nS}\}\cup\upPhi_{\nS})$. Then the following hold 
for some $\Fix\TS$-valued random variable $x$:
\begin{enumerate}
\item
Suppose that 
$\EC{\norm{e_{\nS}}_{\HS}^2}{\XX_{\nS}}\to\mathsf{0}\;\Pas$ and
$\sum_{\nnn}\EE\mu_{\nS}\sqrt{\EC{\norm{e_{\nS}}_{\HS}^2}
{\XX_{\nS}}}<\pinf\;\Pas$ Then the following hold:
\begin{enumerate}
\item
\label{c:25ia-}
$(\TS x_{\nS}-x_{\nS})_{\nnn}$ converges strongly $\Pas$ to $x$.
\item
\label{c:25ia}
$(x_{\nS})_{\nnn}$ converges weakly $\Pas$ to $x$.
\item
\label{c:25ib}
Suppose that $\TS-\Id$ is demiregular at every point in $\Fix\TS$.
Then $(x_{\nS})_{\nnn}$ converges strongly $\Pas$ to $x$.
\end{enumerate}
\item
Suppose that $\EE\norm{e_{\nS}}^2_{\HS}\to0$ and
$\sum_{\nnn}\sqrt{\EE\mu_{\nS}^2\EE\norm{e_{\nS}}_{\HS}^2}<\pinf$.
Then the following hold:
\begin{enumerate}
\item
\label{c:25ic-}
$(\TS x_{\nS}-x_{\nS})_{\nnn}$ converges strongly in
$L^1(\upOmega,\FE,\PP;\HS)$ and strongly $\Pas$ to $x$.
\item
\label{c:25ic}
$x\in L^2(\upOmega,\FE,\PP;\Fix\TS)$ and $(x_{\nS})_{\nnn}$
converges weakly in $L^2(\upOmega,\FE,\PP;\HS)$ and weakly $\Pas$
to $x$.
\item
\label{c:25id}
Suppose that $\TS-\Id$ is demiregular at every point in $\Fix\TS$. 
Then $(x_{\nS})_{\nnn}$ converges strongly in
$L^2(\upOmega,\FE,\PP;\HS)$ and strongly $\Pas$ to $x$.
\end{enumerate}
\end{enumerate}
\end{corollary}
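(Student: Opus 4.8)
The plan is to reduce the statement to Theorem~\ref{t:25} by replacing the $\upalpha$-averaged operator $\TS$ with its associated nonexpansive operator. First I would set $\mathsf{R}=\Id+\upalpha^{-1}(\TS-\Id)$, which is nonexpansive by the very definition of $\upalpha$-averagedness, and observe that $\Fix\mathsf{R}=\Fix\TS\neq\emp$ since $\mathsf{R}\xS=\xS\Leftrightarrow\TS\xS=\xS$. The key algebraic identity is $\TS\xS-\xS=\upalpha(\mathsf{R}\xS-\xS)$, whence $\mu_{\nS}(\TS x_{\nS}+e_{\nS}-x_{\nS})=\upalpha\mu_{\nS}\brk1{\mathsf{R}x_{\nS}+\upalpha^{-1}e_{\nS}-x_{\nS}}$. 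Therefore, setting $\nu_{\nS}=\upalpha\mu_{\nS}$ and $f_{\nS}=\upalpha^{-1}e_{\nS}$, the recursion \eqref{e:c25} becomes exactly the Krasnosel'ski\u\i--Mann iteration \eqref{e:p25} driven by $\mathsf{R}$, the relaxations $\nu_{\nS}$, and the errors $f_{\nS}$.

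Next I would check that the hypotheses of Theorem~\ref{t:25} hold in this reformulation. Since $\mu_{\nS}\in L^\infty(\upOmega,\FE,\PP;\left]0,1/\upalpha\right[)$ we get $\nu_{\nS}\in L^\infty(\upOmega,\FE,\PP;\left]0,1\right[)$; clearly $f_{\nS}\in L^2(\upOmega,\FE,\PP;\HS)$; and $\upsigma(\{f_{\nS}\}\cup\upPhi_{\nS})=\upsigma(\{e_{\nS}\}\cup\upPhi_{\nS})$ together with $\nu_{\nS}=\upalpha\mu_{\nS}$ shows that $\nu_{\nS}$ is independent of $\upsigma(\{f_{\nS}\}\cup\upPhi_{\nS})$. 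Moreover $\nu_{\nS}(1-\nu_{\nS})=\upalpha\mu_{\nS}(1-\upalpha\mu_{\nS})$, so $\sum_{\nnn}\EE\brk1{\nu_{\nS}(1-\nu_{\nS})}=\upalpha\sum_{\nnn}\EE\brk1{\mu_{\nS}(1-\upalpha\mu_{\nS})}=\pinf$. For the error conditions, $\EC1{\norm{f_{\nS}}_{\HS}^2}{\XX_{\nS}}=\upalpha^{-2}\EC1{\norm{e_{\nS}}_{\HS}^2}{\XX_{\nS}}$ and $\EE\mu_{\nS}\sqrt{\EC1{\norm{e_{\nS}}_{\HS}^2}{\XX_{\nS}}}=\EE\nu_{\nS}\sqrt{\EC1{\norm{f_{\nS}}_{\HS}^2}{\XX_{\nS}}}$, so the assumptions of part~(i) transfer verbatim; similarly $\EE\norm{f_{\nS}}_{\HS}^2=\upalpha^{-2}\EE\norm{e_{\nS}}_{\HS}^2$ and $\EE\nu_{\nS}^2\EE\norm{f_{\nS}}_{\HS}^2=\EE\mu_{\nS}^2\EE\norm{e_{\nS}}_{\HS}^2$, which handles part~(ii).

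Finally I would translate the conclusions back. Since $\TS x_{\nS}-x_{\nS}=\upalpha(\mathsf{R}x_{\nS}-x_{\nS})$, the asymptotic behavior of $(\TS x_{\nS}-x_{\nS})_{\nnn}$ (strongly in $L^1(\upOmega,\FE,\PP;\HS)$ and/or $\Pas$) is read off directly from that of $(\mathsf{R}x_{\nS}-x_{\nS})_{\nnn}$; the $\Fix\mathsf{R}$-valued weak limit $x$ furnished by Theorem~\ref{t:25} is $\Fix\TS$-valued, and it lies in $L^2(\upOmega,\FE,\PP;\Fix\TS)$ under the part~(ii) hypotheses; and, since $\TS-\Id=\upalpha(\mathsf{R}-\Id)$, the operator $\mathsf{R}-\Id$ is demiregular at a point precisely when $\TS-\Id$ is. With these identifications, assertions \ref{c:25ia-}--\ref{c:25id} follow from assertions \ref{t:25ia-}--\ref{t:25id} of Theorem~\ref{t:25}. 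There is essentially no obstacle here: the whole proof is bookkeeping organized around the identity $\TS-\Id=\upalpha(\mathsf{R}-\Id)$, and the only point deserving a line of care is that independence of $\mu_{\nS}$ from $\upsigma(\{e_{\nS}\}\cup\upPhi_{\nS})$ passes to $\nu_{\nS}$ and $f_{\nS}$, which is immediate because $f_{\nS}$ is a deterministic rescaling of $e_{\nS}$ and $\nu_{\nS}$ a deterministic rescaling of $\mu_{\nS}$.
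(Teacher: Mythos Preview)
Your proposal is correct and follows exactly the paper's approach: the paper's proof is the single line ``Apply Theorem~\ref{t:25} to the nonexpansive operator $\Id+\upalpha^{-1}(\TS-\Id)$ and observe that it has the same fixed points as $\TS$,'' and you have simply spelled out in full detail the bookkeeping (the substitutions $\nu_{\nS}=\upalpha\mu_{\nS}$, $f_{\nS}=\upalpha^{-1}e_{\nS}$, and the verification that all hypotheses and conclusions transfer) that the paper leaves implicit.
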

\begin{proof}
Apply Theorem~\ref{t:25} to the nonexpansive operator
$\Id+\upalpha^{-1}(\TS-\Id)$ and observe that it has the same fixed
points as $\TS$.
\end{proof}

\begin{remark}
As discussed in \cite{Opti04,Acnu24}, the Krasnosel'ski\u\i--Mann
iterative process for averaged operators is at the core of monotone
operator splitting strategies such as the three operator splitting
scheme of \cite{Davi17}, the Douglas--Rachford algorithm
\cite{Lion79}, and the constant proximal parameter version of the
forward-backward algorithm \cite{Merc80}. Stochastically relaxed
and perturbed extensions of these algorithms can be derived from
Corollary~\ref{c:25} with weaker assumptions than those of 
\cite[Theorem~4.1]{Pafa16}.
\end{remark}

We now consider a stochastic version of the (forward) Euler method
to find a zero of a cocoercive operator. For simplicity, we adopt
deterministic step-sizes $(\upgamma_{\nS})_{\nnn}$. This result
extends those of \cite{Siop15,Pafa16,Rosa16} by establishing,
under weaker assumptions, weak convergence $\PP$-almost surely and,
in addition, proving for the first time weak convergence in
$L^2(\upOmega,\FE,\PP;\HS)$.

\begin{corollary}
\label{c:26}
Let $\upbeta\in\RPP$ and let $\mathsf{B}\colon\HS\to\HS$ be
$\upbeta$-cocoercive, with $\zer\mathsf{B}=\menge{\mathsf{z}\in\HS}
{\mathsf{B}\mathsf{z}=\mathsf{0}}\neq\emp$. Let
$(\mathsf{K},\EuScript{K})$ be a measurable space, let
$k\colon(\upOmega,\FE,\PP)\to(\mathsf{K},\EuScript{K})$ be a random
variable, let $\upxi\in\RPP$, and let 
$(\mathsf{B}_{\kS})_{\kS\in\mathsf{K}}$ be operators from $\HS$ to 
$\HS$ such that $\boldsymbol{\mathsf{B}}\colon(\mathsf{K}\times\HS,
\EuScript{K}\otimes\BE_{\HS})\to(\HS,\BE_{\HS})\colon(\kS,\xS)
\mapsto\mathsf{B}_{\kS}\xS$ is measurable and
\begin{equation}
\label{e:sgd1}
(\forall\xS\in\HS)\quad
\EE(\mathsf{B}_{k}\xS)=\mathsf{B}\xS\quad\text{and}\quad
\EE\norm{\mathsf{B}_{k}\xS-\mathsf{B}\xS}^2_{\HS}\leq\upxi.
\end{equation} 
Let $\upnu\in\left]{2}/{3},1\right]$
and $x_{\mathsf{0}}\in L^2(\upOmega,\FE,\PP;\HS)$. Iterate
\begin{equation}
\label{e:c26}
\begin{array}{l}
\textup{for}\;\nS=0,1,\ldots\\
\left\lfloor
\begin{array}{l}
\XX_{\nS}=\upsigma(x_{\mathsf{0}},\ldots,x_{\nS})\\
k_{\nS}\;\textup{is a copy of}\;k\;\text{and is independent of}
\;\XX_{\nS}\\
\upgamma_{\nS}=\dfrac{2\upbeta}{(\nS+1)^{\upnu}}\\
x_{\nS+1}=x_{\nS}-\upgamma_{\nS}\mathsf{B}_{k_{\nS}}x_{\nS}.
\end{array}
\right.\\
\end{array}
\end{equation}
Then the following hold for some 
$x\in L^2(\upOmega,\FE,\PP;\zer\mathsf{B})$:
\begin{enumerate}
\item
\label{c:26i-}
$(\mathsf{B}x_{\nS})_{\nnn}$ converges strongly in
$L^1(\upOmega,\FE,\PP;\HS)$ and strongly $\Pas$ to $\mathsf{0}$.
\item
\label{c:26i}
$(x_{\nS})_{\nnn}$ converges weakly in $L^2(\upOmega,\FE,\PP;\HS)$ 
and weakly $\Pas$ to $x$.
\item
\label{c:26ii}
Suppose that $\mathsf{B}$ is demiregular at every point in
$\zer\mathsf{B}$. Then $(x_{\nS})_{\nnn}$ converges
strongly in $L^1(\upOmega,\FE,\PP;\HS)$ and strongly $\Pas$ to $x$.
\end{enumerate}
\end{corollary}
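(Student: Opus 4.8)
The plan is to realize the recursion \eqref{e:c26} as an instance of Algorithm~\ref{algo:4} (hence of Algorithm~\ref{algo:3}), following the data-setup used in the proof of Theorem~\ref{t:25}, and then to refine the analysis of the residual $(\mathsf{B}x_{\nS})_{\nnn}$ by exploiting the $\upbeta$-cocoercivity of $\mathsf{B}$ together with the unbiasedness and bounded variance in \eqref{e:sgd1}. Set $\ZS=\zer\mathsf{B}$; since a $\upbeta$-cocoercive operator is monotone and $\upbeta^{-1}$-Lipschitzian, $\ZS$ is nonempty, closed, and convex, and $\mathsf{F}=\Id-\upbeta\mathsf{B}$ is firmly nonexpansive with $\Fix\mathsf{F}=\ZS$ \cite[Proposition~4.4]{Livre1}. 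I would first check by induction that $(x_{\nS})_{\nnn}$ lies in $L^2(\upOmega,\FE,\PP;\HS)$: if $x_{\nS}\in L^2$, then $\mathsf{B}x_{\nS}\in L^2$, and, since $k_{\nS}$ is independent of $\XX_{\nS}$, \eqref{e:sgd1} and Tonelli's theorem yield $\EE\|\mathsf{B}_{k_{\nS}}x_{\nS}\|_{\HS}^2\leq\upxi+\EE\|\mathsf{B}x_{\nS}\|_{\HS}^2<\pinf$, so that $d_{\nS}:=\upgamma_{\nS}\mathsf{B}_{k_{\nS}}x_{\nS}\in L^2$ and $x_{\nS+1}=x_{\nS}-d_{\nS}\in L^2$. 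Next I would take in \eqref{e:a2} the data $t^*_{\nS}=\upgamma_{\nS}\mathsf{B}_{k_{\nS}}x_{\nS}$ and $\eta_{\nS}=\scal{x_{\nS}}{t^*_{\nS}}_{\HS}-\|t^*_{\nS}\|_{\HS}^2$ (so that the $\alpha$-rule of \eqref{e:a2} returns $\alpha_{\nS}=\mathsf 1_{[t^*_{\nS}\neq0]}$ and $d_{\nS}=\upgamma_{\nS}\mathsf{B}_{k_{\nS}}x_{\nS}$, the integrability conditions being routine via Cauchy--Schwarz), together with $\lambda_{\nS}\equiv1$. By Lemma~\ref{l:7}, $\EC{\mathsf{B}_{k_{\nS}}x_{\nS}}{\XX_{\nS}}=\mathsf{B}x_{\nS}$ and $\EC{\|\mathsf{B}_{k_{\nS}}x_{\nS}-\mathsf{B}x_{\nS}\|_{\HS}^2}{\XX_{\nS}}\leq\upxi$ $\Pas$; combining these with Lemma~\ref{l:2}, the identity $\alpha_{\nS}\eta_{\nS}=\scal{x_{\nS}}{d_{\nS}}_{\HS}-\|d_{\nS}\|_{\HS}^2$, and the cocoercivity bound $\scal{x_{\nS}-\mathsf z}{\mathsf{B}x_{\nS}}_{\HS}\geq\upbeta\|\mathsf{B}x_{\nS}\|_{\HS}^2$ (valid for $\mathsf z\in\ZS$ because $\mathsf{B}\mathsf z=\mathsf0$), one checks that the outer-approximation condition of \eqref{e:a2} holds with
\begin{equation}
\varepsilon_{\nS}(\cdot,\mathsf z)=2\upbeta^2\,\mathsf 1_{[\upgamma_{\nS}>\upbeta]}\|\mathsf{B}x_{\nS}\|_{\HS}^2+\upgamma_{\nS}^2\upxi,
\end{equation}
which is $\RP$-valued, independent of $\mathsf z$, and---because $\upgamma_{\nS}=2\upbeta/(\nS+1)^{\upnu}\leq\upbeta$ for all but finitely many $\nS$ and $2\upnu>1$ forces $\sum_{\nnn}\upgamma_{\nS}^2<\pinf$---lies in $\mathfrak C(\upOmega,\FE,\PP;\HS)$ with $\sum_{\nnn}\varepsilon_{\nS}(\cdot,\mathsf z)<\pinf$ $\Pas$ and $\sum_{\nnn}\EE\varepsilon_{\nS}(\cdot,\mathsf z)<\pinf$. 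Thus \eqref{e:c26} is an instance of Algorithm~\ref{algo:4} with $\lambda_{\nS}\equiv1$, and Theorems~\ref{t:3} and \ref{t:2} apply.

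From this, $(\|x_{\nS}\|_{\HS})_{\nnn}$ and $(\EE\|x_{\nS}\|_{\HS}^2)_{\nnn}$ are bounded, $(\|x_{\nS}-\mathsf z\|_{\HS})_{\nnn}$ converges $\Pas$ for every $\ZS$-valued random variable, and, since $\inf_{\nnn}\EE(\lambda_{\nS}(2-\lambda_{\nS}))=1>0$ and $\sup_{\nnn}\lambda_{\nS}=1<2$, Theorem~\ref{t:2}\ref{t:2ive} and \ref{t:2viie} give $\sum_{\nnn}\|x_{\nS+1}-x_{\nS}\|_{\HS}^2<\pinf$ $\Pas$ and $\sum_{\nnn}\EE\|x_{\nS+1}-x_{\nS}\|_{\HS}^2<\pinf$. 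The key additional ingredient, obtained directly from the update (the generic estimate of Theorem~\ref{t:3}\ref{t:3i} being too lossy), is the \emph{cocoercive descent}: expanding $\|x_{\nS+1}-\mathsf z\|_{\HS}^2$, taking $\EC{\cdot}{\XX_{\nS}}$, and using Lemma~\ref{l:7} and cocoercivity, one gets, for every $\mathsf z\in\ZS$,
\begin{equation}
\label{e:coco-desc}
\EC1{\|x_{\nS+1}-\mathsf z\|_{\HS}^2}{\XX_{\nS}}\leq\|x_{\nS}-\mathsf z\|_{\HS}^2-\upgamma_{\nS}(2\upbeta-\upgamma_{\nS})\|\mathsf{B}x_{\nS}\|_{\HS}^2+\upgamma_{\nS}^2\upxi\;\Pas
\end{equation}
Because $\upgamma_{\nS}\leq2\upbeta$ and $\upgamma_{\nS}\to0$, one has $2\upbeta-\upgamma_{\nS}\geq\upbeta$ for $\nS$ large; applying Lemma~\ref{l:4}\ref{l:4i} (resp.\ Corollary~\ref{l:5}\ref{l:5i}) to \eqref{e:coco-desc} and using $\sum_{\nnn}\upgamma_{\nS}^2\upxi<\pinf$ then yields $\sum_{\nnn}\upgamma_{\nS}\|\mathsf{B}x_{\nS}\|_{\HS}^2<\pinf$ $\Pas$ and $\sum_{\nnn}\upgamma_{\nS}\EE\|\mathsf{B}x_{\nS}\|_{\HS}^2<\pinf$; as $\upnu\leq1$ forces $\sum_{\nnn}\upgamma_{\nS}=\pinf$, it follows that $\varliminf\|\mathsf{B}x_{\nS}\|_{\HS}=0$ $\Pas$ and $\varliminf\EE\|\mathsf{B}x_{\nS}\|_{\HS}^2=0$.

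The main obstacle will be to upgrade these lower limits to genuine limits, i.e.\ to prove \ref{c:26i-}. The plan is to control the oscillation of $(\|\mathsf{B}x_{\nS}\|_{\HS}^2)_{\nnn}$: starting from $\|\mathsf{B}x_{\nS+1}-\mathsf{B}x_{\nS}\|_{\HS}\leq\upbeta^{-1}\|x_{\nS+1}-x_{\nS}\|_{\HS}=\upbeta^{-1}\|d_{\nS}\|_{\HS}$, the nonpositivity of $\scal{\mathsf{B}_{k_{\nS}}x_{\nS}}{\mathsf{B}x_{\nS+1}-\mathsf{B}x_{\nS}}_{\HS}$ furnished by monotonicity of $\mathsf{B}$, the bound $\EC{\|\mathsf{B}_{k_{\nS}}x_{\nS}-\mathsf{B}x_{\nS}\|_{\HS}^2}{\XX_{\nS}}\leq\upxi$, and the boundedness of $(\|x_{\nS}\|_{\HS})_{\nnn}$, one derives an inequality of the type $\|\mathsf{B}x_{\nS+1}\|_{\HS}^2\leq\|\mathsf{B}x_{\nS}\|_{\HS}^2+u_{\nS}$ with perturbations $u_{\nS}$ summable against $(\upgamma_{\nS})_{\nnn}$; combined with $\sum_{\nnn}\upgamma_{\nS}\|\mathsf{B}x_{\nS}\|_{\HS}^2<\pinf$ and $\sum_{\nnn}\upgamma_{\nS}=\pinf$, a standard ``excursion'' argument then forces $\|\mathsf{B}x_{\nS}\|_{\HS}\to\mathsf0$ $\Pas$, and the analogous deterministic estimate for $(\EE\|\mathsf{B}x_{\nS}\|_{\HS}^2)_{\nnn}$ gives $\EE\|\mathsf{B}x_{\nS}\|_{\HS}^2\to0$, whence $(\mathsf{B}x_{\nS})_{\nnn}\to\mathsf0$ in $L^1(\upOmega,\FE,\PP;\HS)$ by Lemma~\ref{l:3}. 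I expect this to be the delicate point: controlling the cross terms between the noise $\mathsf{B}_{k_{\nS}}x_{\nS}-\mathsf{B}x_{\nS}$ (whose variance does not vanish) and the increment $x_{\nS+1}-x_{\nS}$ requires the sharper summability $\sum_{\nnn}\upgamma_{\nS}^{3/2}<\pinf$, which is exactly where the hypothesis $\upnu>2/3$ (equivalently $3\upnu/2>1$) enters---in particular for the almost sure statement---rather than merely $\upnu>1/2$.

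Once \ref{c:26i-} is in hand, the rest follows from the general theory. The demiclosedness of $\Id-\mathsf{F}=\upbeta\mathsf{B}$ at $\mathsf0$ \cite[Theorem~4.27]{Livre1} together with $\|\mathsf{B}x_{\nS}\|_{\HS}\to\mathsf0$ $\Pas$ shows that every weak sequential cluster point of $(x_{\nS})_{\nnn}$ is $\Pas$ a zero of $\mathsf{B}$, i.e.\ $\mathfrak W(x_{\nS})_{\nnn}\subset\ZS$ $\Pas$; Theorem~\ref{t:3}, respectively Theorem~\ref{t:2}\ref{t:2weak}, then produces a random variable $x\in L^2(\upOmega,\FE,\PP;\ZS)$ such that $x_{\nS}\weakly x$ $\Pas$ and weakly in $L^2(\upOmega,\FE,\PP;\HS)$, which is \ref{c:26i}. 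Finally, if $\mathsf{B}$ is demiregular at every point of $\ZS$, then on a set of full probability $x_{\nS}\weakly x$ and $\mathsf{B}x_{\nS}\to\mathsf0=\mathsf{B}x$, so that $x_{\nS}\to x$ strongly $\Pas$; strong convergence in $L^1(\upOmega,\FE,\PP;\HS)$ then follows from Theorem~\ref{t:2}\ref{t:2viig}, establishing \ref{c:26ii}.
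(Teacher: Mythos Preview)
Your strategy is genuinely different from the paper's, and the part you flag as ``delicate'' is in fact a real gap as written.

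\medskip
\noindent\textbf{What the paper does.} The paper does \emph{not} cast \eqref{e:c26} directly as an instance of Algorithm~\ref{algo:4}. Instead it applies the Bravo--Cominetti reduction of \cite{Brav24}: fix $\uptheta\in\left]0,2\upbeta\right[$, write $e_{\nS}=\mathsf{B}x_{\nS}-\mathsf{B}_{k_{\nS}}x_{\nS}$, introduce the exponentially smoothed error $e'_{\nS+1}=(1-\upgamma_{\nS+1})e'_{\nS}+\upgamma_{\nS+1}e_{\nS+1}$, and set $y_{\nS}=x_{\nS}-e'_{\nS}$. Then $y_{\nS+1}=y_{\nS}+\upmu_{\nS}(\TS y_{\nS}+e''_{\nS}-y_{\nS})$ with $\TS=\Id-\uptheta\mathsf{B}$ and $e''_{\nS}=\TS x_{\nS}-\TS y_{\nS}$, so $\norm{e''_{\nS}}_{\HS}\leq\norm{e'_{\nS}}_{\HS}$. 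The hypothesis $\upnu>2/3$ enters \emph{only} through \cite[Theorem~2.5 and Example~2.7]{Brav24}, which give $\EE\norm{e'_{\nS}}_{\HS}^2\to 0$, $\norm{e'_{\nS}}_{\HS}\to 0$ $\Pas$, and $\sum_{\nnn}\upgamma_{\nS}\sqrt{\EE\norm{e'_{\nS}}_{\HS}^2}<\pinf$. These feed straight into Theorem~\ref{t:25}\ref{t:25ii} (via Corollary~\ref{c:25}) applied to $(y_{\nS})_{\nnn}$; the conclusions for $(x_{\nS})_{\nnn}$ follow because $x_{\nS}=y_{\nS}+e'_{\nS}$ and $e'_{\nS}\to\mathsf0$.

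\medskip
\noindent\textbf{Where your argument breaks.} Your direct route reaches the cocoercive descent \eqref{e:coco-desc} and $\sum_{\nnn}\upgamma_{\nS}\norm{\mathsf{B}x_{\nS}}_{\HS}^2<\pinf$ correctly, but the excursion step does not close. Writing $w_{\nS}=\mathsf{B}_{k_{\nS}}x_{\nS}-\mathsf{B}x_{\nS}$ and $\Delta_{\nS}=\mathsf{B}x_{\nS+1}-\mathsf{B}x_{\nS}$, your ingredients give
\[
\norm{\mathsf{B}x_{\nS+1}}_{\HS}^2\leq\norm{\mathsf{B}x_{\nS}}_{\HS}^2+2\abs{\scal{w_{\nS}}{\Delta_{\nS}}_{\HS}}+\norm{\Delta_{\nS}}_{\HS}^2,
\]
and $\abs{\scal{w_{\nS}}{\Delta_{\nS}}_{\HS}}\leq\upbeta^{-1}\upgamma_{\nS}\norm{w_{\nS}}_{\HS}\norm{\mathsf{B}_{k_{\nS}}x_{\nS}}_{\HS}$. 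Expanding $\norm{\mathsf{B}_{k_{\nS}}x_{\nS}}_{\HS}\leq\norm{w_{\nS}}_{\HS}+\norm{\mathsf{B}x_{\nS}}_{\HS}$ produces an unavoidable term of order $\upgamma_{\nS}\norm{w_{\nS}}_{\HS}^2$. Under \eqref{e:sgd1} you only know $\EC{\norm{w_{\nS}}_{\HS}^2}{\XX_{\nS}}\leq\upxi$, so neither $\sum_{\nnn}\upgamma_{\nS}\norm{w_{\nS}}_{\HS}^2$ nor its conditional expectation is summable (because $\sum_{\nnn}\upgamma_{\nS}=\pinf$), and $u_{\nS}/\upgamma_{\nS}$ is not $\Pas$ bounded either. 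None of the standard excursion lemmas apply, and the hoped-for $\upgamma_{\nS}^{3/2}$ gain does not materialize from this decomposition: the extra half-power is absorbed by $\norm{\Delta_{\nS}}_{\HS}$, not by the noise term. This is precisely the obstruction the smoothing sequence $(e'_{\nS})_{\nnn}$ in \cite{Brav24} is designed to remove---it trades the raw noise $w_{\nS}$, whose magnitude does not decay, for the averaged $e'_{\nS}$, which does. If you want to avoid that reduction, you would need either higher moment bounds on $w_{\nS}$ (not assumed) or a genuinely different control of the residual; as stated, item~\ref{c:26i-} is not established, and items~\ref{c:26i}--\ref{c:26ii} rest on it.
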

\begin{proof}
We apply Corollary~\ref{c:25} to the reduction technique of
\cite{Brav24}. Fix $\uptheta\in\left]0,2\upbeta\right[$ and set
\begin{equation}
(\forall\nnn)\quad
e_{\nS}
=\mathsf{B}x_{\nS}-
\mathsf{B}_{k_{\nS}}x_{\nS}
=\mathsf{B}x_{\nS}-
\boldsymbol{\mathsf{B}}\circ(k_{\nS},x_{\nS})\;\;\text{and}\;\;
\mu_{\nS}=\frac{\upgamma_{\nS}}{\uptheta}\in
\left]0,\frac{2\upbeta}{\uptheta}\right[.
\end{equation}
Then, for every $\nnn$, $e_{\nS}$ is measurable with 
$\EC{e_{\nS}}{\XX_{\nS}}=0$ and
$\EE\mu_{\nS}=\upgamma_{\nS}/\uptheta$. Let us also define
$e'_{\mathsf{0}}=\upgamma_{\mathsf{0}}e_{\mathsf{0}}$ and 
\begin{equation}
(\forall\nnn)\quad 
e'_{\nS+1}=(1-\upgamma_{\nS+1})e'_{\nS}+\upgamma_{\nS+1}e_{\nS+1}.
\end{equation}
Set $\TS=\Id-\uptheta\mathsf{B}$. Then
$\Fix\TS=\zer\mathsf{B}$ and $\TS$ is 
$\uptheta/(2\upbeta)$-averaged
\cite[Proposition~4.39]{Livre1}. Finally, 
define, for every $\nnn$, $y_{\nS}=x_{\nS}-e'_{\nS}$ and
$\YY_{\nS}=\upsigma(y_{\mathsf{0}},\ldots,y_{\nS})$. Then, we infer
from \eqref{e:c26} that 
\begin{equation}
\label{e:c25new}
(\forall\nnn)\quad
y_{\nS+1}=y_{\nS}+\upmu_{\nS}\brk1{\TS y_{\nS}+e''_{\nS}-y_{\nS}},
\end{equation}
where 
\begin{equation}
\label{e:sgd2}
(\forall\nnn)\quad
\begin{cases}
e''_{\nS}=\TS x_{\nS}-\TS y_{\nS}\in L^2(\upOmega,\FE,\PP;\HS);\\
\norm{e''_{\nS}}_{\HS}\leq\norm{x_{\nS}-y_{\nS}}_{\HS}
=\norm{e'_{\nS}}_{\HS}\;\;\Pas
\end{cases}
\end{equation}
It follows from the choice of
$(\upgamma_{\nS})_{\nnn}$, the uniformly bounded variance in
\eqref{e:sgd1}, and \cite[Example~2.7 and Theorem~2.5]{Brav24} that
\begin{equation}
\sum_{\nnn}\sqrt{\EE\mu_{\nS}^2\EE\norm{e'_{\nS}}_{\HS}^2}
=\sum_{\nnn}\frac{\upgamma_{\nS}}{\uptheta}
\sqrt{\EE\norm{e'_{\nS}}_{\HS}^2}
<\pinf,
\end{equation}
and
\begin{equation}
\sum_{\nnn}\EE\brk3{\mu_{\nS}
\brk2{1-\frac{\uptheta}{2\upbeta}\mu_{\nS}}}
=\sum_{\nnn}{\frac{\upgamma_{\nS}}{\uptheta}
\brk2{1-\frac{\upgamma_{\nS}}{2\upbeta}}}
=\pinf.
\end{equation}
We also deduce from the
proofs of \cite[Lemma~2.4 and Theorem~2.5]{Brav24} that 
$\EE\norm{e'_{\nS}}_{\HS}^2\to 0$ and $\norm{e'_{\nS}}_{\HS}\to
0\;\Pas$
Consequently, by taking \eqref{e:sgd2} into account, we obtain
\begin{equation}
\displaystyle\sum_{\nnn}\sqrt{\EE\mu_{\nS}^2
\EE\norm{e''_{\nS}}_{\HS}^2}<\pinf\quad\text{and}\quad
\EE\norm{e''_{\nS}}_{\HS}^2\to 0.
\end{equation}

\ref{c:26i-}: It follows from Theorem~\ref{t:25}\ref{t:25ic-}
that $(\mathsf{B}y_{\nS})_{\nnn}$ converges strongly in
$L^1(\upOmega,\FE,\PP;\HS)$ and strongly $\Pas$ to $\mathsf{0}$.
On the other hand, \eqref{e:sgd2} implies that
\begin{align}
(\forall\nnn)\quad
\uptheta\norm{\mathsf{B}x_{\nS}-\mathsf{B}y_{\nS}}_{\HS}
&=\norm{\uptheta\mathsf{B}x_{\nS}-\uptheta\mathsf{B}y_{\nS}}_{\HS}
\nonumber\\
&\leq\norm{x_{\nS}-\uptheta\mathsf{B}x_{\nS}-
(y_{\nS}-\uptheta\mathsf{B}y_{\nS})}_{\HS}+
\norm{x_{\nS}-y_{\nS}}_{\HS}\nonumber\\
&=\norm{e''_{\nS}}_{\HS}+\norm{e'_{\nS}}_{\HS}\nonumber\\
&\leq 2\norm{e'_{\nS}}_{\HS}\;\;\Pas
\end{align}
Since $\EE\norm{e'_{\nS}}_{\HS}^2\to 0$ and 
$\norm{e'_{\nS}}_{\HS}\to 0\;\Pas$, we deduce that 
$(\mathsf{B}x_{\nS}-\mathsf{B}y_{\nS})_{\nnn}$ converges strongly 
in $L^1(\upOmega,\FE,\PP;\HS)$ and strongly $\Pas$ to $\mathsf{0}$
and, therefore, we obtain the convergence results for
$(\mathsf{B}x_{\nS})_{\nnn}$.

\ref{c:26i}: We infer from Theorem~\ref{t:25}\ref{t:25ic} that
$(y_{\nS})_{\nnn}$ converges weakly in $L^2(\upOmega,\FE,\PP;\HS)$
and weakly $\Pas$ to some $x\in
L^2(\upOmega,\FE,\PP;\zer\mathsf{B})$. 
However, for every $\nnn$, $x_{\nS}=y_{\nS}+e'_{\nS}$.
Since $(e'_{\nS})_{\nnn}$
converges strongly $\Pas$ and strongly in 
$L^2(\upOmega,\FE,\PP;\HS)$ to $\mathsf{0}$, we conclude that
$(x_{\nS})_{\nnn}$ converges weakly in 
$L^2(\upOmega,\FE,\PP;\HS)$ and weakly $\Pas$ to $x$.

\ref{c:26ii}: This follows from 
Theorem~\ref{t:25}\ref{t:25id} using the same arguments as
in the proofs of \ref{c:26i-} and \ref{c:26i}.
\end{proof}

The following special case of Corollary~\ref{c:26} concerns
stochastic optimization and establishes new results on the
convergence of the iterates generated by the standard stochastic
gradient method, a method that goes back to the classical work of
\cite{Blum54,Ermo66,Robi51}.

\begin{corollary}
\label{c:27}
Let $\upbeta\in\RPP$ and let $\mathsf{f}\colon\HS\to\RR$ be
convex, differentiable, and such that $\nabla\mathsf{f}$ is
$1/\upbeta$-Lipschitzian, with $\Argmin\mathsf{f}\neq\emp$. Let
$(\mathsf{K},\EuScript{K})$ be a measurable space, let
$k\colon(\upOmega,\FE,\PP)\to(\mathsf{K},\EuScript{K})$ be a random
variable, let $\upxi\in\RPP$, 
and, for every $\kS\in\mathsf{K}$, let
$\mathsf{g}_{\kS}\colon\HS\to\RR$ be differentiable and such that
$\boldsymbol{\mathsf{B}}\colon(\mathsf{K}\times\HS,
\EuScript{K}\otimes\BE_{\HS})\to(\HS,\BE_{\HS})\colon(\kS,\xS)
\mapsto\nabla\mathsf{g}_{\kS}(\xS)$ is measurable and
\begin{equation}
\label{e:sgd12}
(\forall\xS\in\HS)\quad
\EE\nabla\mathsf{g}_{k}(\xS)=\nabla\mathsf{f}(\xS)\quad\text{and}
\quad\EE\norm{\nabla\mathsf{g}_{k}(\xS)
-\nabla\mathsf{f}(\xS)}^2_{\HS}\leq\upxi.
\end{equation} 
Let $\upnu\in\left]{2}/{3},1\right]$
and $x_{\mathsf{0}}\in L^2(\upOmega,\FE,\PP;\HS)$. Iterate
\begin{equation}
\label{e:c27}
\begin{array}{l}
\textup{for}\;\nS=0,1,\ldots\\
\left\lfloor
\begin{array}{l}
\XX_{\nS}=\upsigma(x_{\mathsf{0}},\ldots,x_{\nS})\\
k_{\nS}\;\textup{is a copy of}\;k\;\textup{and is independent of}
\;\XX_{\nS}\\
\upgamma_{\nS}=\dfrac{2\upbeta}{(\nS+1)^{\upnu}}\\
x_{\nS+1}=x_{\nS}
-\upgamma_{\nS}\nabla\mathsf{g}_{k_{\nS}}(x_{\nS}).
\end{array}
\right.\\
\end{array}
\end{equation}
Then the following hold for some 
$x\in L^2(\upOmega,\FE,\PP;\Argmin\mathsf{f})$:
\begin{enumerate}
\item
\label{c:27i-}
$(\nabla\mathsf{f}(x_{\nS}))_{\nnn}$ converges strongly in
$L^1(\upOmega,\FE,\PP;\HS)$ and strongly $\Pas$ to $\mathsf{0}$.
\item
\label{c:27i}
$(x_{\nS})_{\nnn}$ converges weakly in $L^2(\upOmega,\FE,\PP;\HS)$ 
and weakly $\Pas$ to $x$.
\item
\label{c:27ii}
Suppose that $\nabla\mathsf{f}$ is demiregular at every point in 
$\Argmin\mathsf{f}$.
Then $(x_{\nS})_{\nnn}$ converges
strongly in $L^1(\upOmega,\FE,\PP;\HS)$ and strongly $\Pas$ to $x$.
\end{enumerate}
\end{corollary}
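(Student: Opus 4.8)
The plan is to recognize Corollary~\ref{c:27} as a direct specialization of Corollary~\ref{c:26}, the passage from the general cocoercive setting to the gradient setting being mediated by the Baillon--Haddad theorem. Concretely, I would set $\mathsf{B}=\nabla\mathsf{f}$ and, for every $\kS\in\mathsf{K}$, $\mathsf{B}_{\kS}=\nabla\mathsf{g}_{\kS}$, so that the operator $\boldsymbol{\mathsf{B}}\colon(\kS,\xS)\mapsto\mathsf{B}_{\kS}\xS=\nabla\mathsf{g}_{\kS}(\xS)$ is exactly the one assumed measurable in the statement, and the recursion \eqref{e:c27} is literally \eqref{e:c26}.

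Next I would verify the hypotheses of Corollary~\ref{c:26}. Since $\mathsf{f}$ is convex, differentiable, and $\nabla\mathsf{f}$ is $1/\upbeta$-Lipschitzian, the Baillon--Haddad theorem \cite[Corollaire~10]{Bail78} (see also \cite[Corollary~18.17]{Livre1}) yields that $\mathsf{B}=\nabla\mathsf{f}$ is $\upbeta$-cocoercive. By Fermat's rule and convexity, $\zer\mathsf{B}=\zer\nabla\mathsf{f}=\Argmin\mathsf{f}$, which is nonempty by assumption. Finally, the two requirements in \eqref{e:sgd12} are precisely those in \eqref{e:sgd1} for this choice of $\mathsf{B}$ and $(\mathsf{B}_{\kS})_{\kS\in\mathsf{K}}$, the random variable $k$ and the exponent $\upnu\in\left]2/3,1\right]$ play the same role, and $x_{\mathsf{0}}\in L^2(\upOmega,\FE,\PP;\HS)$ is assumed. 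Hence Algorithm \eqref{e:c27} is an instance of Algorithm \eqref{e:c26}.

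It then remains to transcribe the conclusions. Assertion~\ref{c:27i-} follows from Corollary~\ref{c:26}\ref{c:26i-} since $\mathsf{B}x_{\nS}=\nabla\mathsf{f}(x_{\nS})$; assertion~\ref{c:27i} follows from Corollary~\ref{c:26}\ref{c:26i} together with $\zer\mathsf{B}=\Argmin\mathsf{f}$, which also identifies the limit as a random variable in $L^2(\upOmega,\FE,\PP;\Argmin\mathsf{f})$; and assertion~\ref{c:27ii} follows from Corollary~\ref{c:26}\ref{c:26ii}, the demiregularity of $\nabla\mathsf{f}$ at every point of $\Argmin\mathsf{f}$ being exactly the demiregularity of $\mathsf{B}$ at every point of $\zer\mathsf{B}$.

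The argument is essentially bookkeeping: there is no genuine obstacle beyond correctly invoking the Baillon--Haddad theorem to secure $\upbeta$-cocoercivity and observing the identity $\zer\nabla\mathsf{f}=\Argmin\mathsf{f}$; the probabilistic content is entirely inherited from Corollary~\ref{c:26}.
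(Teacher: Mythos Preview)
Your proposal is correct and matches the paper's own proof essentially verbatim: the paper simply applies Corollary~\ref{c:26} with $\mathsf{B}=\nabla\mathsf{f}$ (noting it is $\upbeta$-cocoercive by \cite[Corollary~18.17]{Livre1}) and $\mathsf{B}_{\kS}=\nabla\mathsf{g}_{\kS}$.
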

\begin{proof}
Apply Corollary~\ref{c:26} to $\mathsf{B}=\nabla\mathsf{f}$, which
is $\upbeta$-cocoercive \cite[Corollary~18.17]{Livre1}, and, for
every $\kS\in\mathsf{K}$,
$\mathsf{B}_{\kS}=\nabla\mathsf{g}_{\kS}$.
\end{proof}

\begin{remark}
In Corollary~\ref{c:27}\ref{c:27i}, the weak convergence $\Pas$
and in $L^2$ results are new. In a finite-dimensional setting, we
recover the $\Pas$ convergence of \cite[Corollary~4.5]{Brav24} 
with the novelty of the $L^1$ convergence. In the
infinite-dimensional setting, we extend the result of \cite{Rosa20}
where the $\Pas$ weak convergence is stated only for a subsequence
of the iterates.
\end{remark}

\begin{remark}
Variants of Corollary~\ref{c:26} can be explored by modifying
the probabilistic assumptions in \eqref{e:sgd1}. In the context of
Corollary~\ref{c:27}, see for instance \cite{Chen24,Jofr19,Nguy19}
and their bibliographies for possible candidates.
\end{remark}

\section{Application to common fixed point problems}
\label{sec:5}

The problem under consideration is a common fixed point problem
involving an arbitrary family of firmly quasinonexpansive
operators. Recall that $\TS\colon\HS\to\HS$ is firmly
quasinonexpansive \cite[Definition~4.1(iv)]{Livre1} if
\begin{equation}
(\forall\xS\in\HS)(\forall\mathsf{y}\in\Fix\TS)\quad
\norm{\TS\xS-\mathsf{y}}_{\HS}^2+\norm{\TS\xS-\xS}_{\HS}^2
\leq\norm{\xS-\mathsf{y}}_{\HS}^2.
\end{equation}

\begin{example}[{\cite[Proposition~2.3]{Mor1}}]
\label{ex:51}
Let $\TS\colon\HS\to\HS$. Then $\TS$ is firmly quasinonexpansive
if one of the following holds:
\begin{enumerate}
\item
\label{ex:51i}
$\CS$ is a nonempty closed convex subset of $\HS$ and
$\TS=\proj_{\CS}$ is the projector onto $\CS$. Here, $\Fix\TS=\CS$.
\item
\label{ex:51ii}
$\mathsf{f}\colon\HS\to\left]\minf,\pinf\right]$ is a proper
lower semicontinuous convex function and
\begin{equation}
\TS=\prox_{\mathsf{f}}\colon\HS\to\HS\colon\xS\mapsto
\underset{\mathsf{y}\in\HS}{\argmin}
\brk2{\mathsf{f}(\mathsf{y})+
\frac{1}{2}\norm{\xS-\mathsf{y}}_{\HS}^2}.
\end{equation}
Here, $\Fix\TS=\Argmin\mathsf{f}$.
\item
\label{ex:51iii}
$\mathsf{A}\colon\HS\to 2^{\HS}$ is maximally monotone and
$\TS=\mathsf{J}_{\mathsf{A}}=(\Id+\mathsf{A})^{-1}$.
Here, $\Fix\TS=\menge{\zS\in\HS}{\mathsf{0}\in\mathsf{A}\zS}$.
\item
\label{ex:51iv}
$\mathsf{f}\colon\HS\to\RR$ is a continuous convex function, 
$\mathsf{s}\colon\HS\to\HS\colon\xS\mapsto\mathsf{s}(\xS)\in
\partial\mathsf{f}(\xS)$ is a selection of $\partial\mathsf{f}$, 
and
\begin{equation}
\label{e:sgp}
\TS=\GS_{\mathsf{f}}\colon\HS\to\HS\colon\mathsf{x}\mapsto
\begin{cases}
\xS-\dfrac{\mathsf{f}(\xS)}{\norm{\mathsf{s}(\xS)}_{\HS}^2}
\mathsf{s}(\xS),&\text{if}\;\;\mathsf{f}(\xS)>0;\\
\xS,&\text{if}\;\;\mathsf{f}(\xS)\leq 0,
\end{cases}
\end{equation}
is the subgradient projector onto
$\Fix\TS=\menge{\xS\in\HS}{\mathsf{f}(\xS)\leq0}$.
\end{enumerate} 
\end{example}

The following formulation covers a wide range of problems in
mathematics and its applications \cite{Byrn14,Aiep96,Else01}.

\begin{problem}
\label{prob:5}
Let $(\mathsf{K},\EuScript{K})$ be a measurable space and
$(\TS_{\kS})_{\kS\in\mathsf{K}}$ a family of firmly
quasinonexpansive operators such that 
$\boldsymbol{\TS}\colon(\mathsf{K}\times\HS,
\EuScript{K}\otimes\BE_{\HS})\to(\HS,\BE_{\HS})\colon(\kS,\xS)
\mapsto\TS_{\kS}\xS$ is measurable and, for every 
$\kS\in\mathsf{K}$, $\Id-\TS_{\kS}$ is demiclosed at
$\mathsf{0}$. Let
$k\colon(\upOmega,\FE,\PP)\to(\mathsf{K},\EuScript{K})$ be a random
variable. The task is to 
\begin{equation}
\text{find}\;\;\xS\in\ZS=
\menge{\mathsf{z}\in\HS}{\mathsf{z}\in\Fix\TS_{k}\;\Pas},
\end{equation}
under the assumption that $\ZS\neq\emp$.
\end{problem}

\begin{remark}
$\ZS$ is a closed convex subset of $\HS$. Indeed, let 
$(\mathsf{z}_{\nS})_{\nnn}$ be a sequence in $\ZS$ that converges
to $\mathsf{z}\in\HS$. For every $\nnn$, let
$\upOmega_{\nS}\in\FE$ be such that $\PP(\upOmega_{\nS})=1$ and,
for every $\upomega\in\upOmega_{\nS}$, let
$\mathsf{z}_{\nS}\in\Fix\TS_{k(\upomega)}$. Set 
$\upOmega'=\bigcap_{\nnn}\upOmega_{\nS}$. Then
$\PP(\upOmega')=1$ and 
\begin{equation}
(\forall\upomega\in\upOmega')(\forall\nnn)\quad
\mathsf{z}_{\nS}\in\Fix\TS_{k(\upomega)}.
\end{equation}
Since each set of fixed points is closed 
\cite[Proposition~2.3(v)]{Else01}, we deduce that, for every 
$\upomega\in\upOmega'$, 
$\mathsf{z}\in\Fix\TS_{k(\upomega)}$, i.e., $\mathsf{z}\in\ZS$. 
So $\ZS$ is closed. 
Likewise, let $\mathsf{z}_1\in\ZS$,
$\mathsf{z}_2\in\ZS$, and $\upalpha\in\zeroun$. Define almost sure
events $\upOmega_1\in\FE$ and $\upOmega_2\in\FE$ as above. Then, it
follows from the convexity of each set of fixed points
\cite[Proposition~2.3(v)]{Else01} that
\begin{equation}
(\forall\upomega\in\upOmega_1\cap\upOmega_2)\quad
\upalpha\mathsf{z}_1+(1-\upalpha)\mathsf{z}_2\in
\Fix\TS_{k(\upomega)}.
\end{equation}
Since $\PP(\upOmega_1\cap\upOmega_2)=1$, we get
$\upalpha\mathsf{z}_1+(1-\upalpha)\mathsf{z}_2\in\ZS$, which shows
that $\ZS$ is convex.
\end{remark}

We propose the following stochastic variant of the extrapolated
parallel block-iterative fixed point algorithm of \cite{Else01}. It
introduces stochasticity at four levels: 
\begin{itemize}
\item
The operators are indexed on a general measurable space rather than
a countable set.
\item
The block of activated operators is randomly selected at each
iteration.
\item
The evaluations of the operators at iteration $\nS$ are averaged
and extrapolated with random weights 
$(\beta_{\iS,\nS})_{1\leq\iS\leq\mathsf{M}}$.
\item
The relaxation parameter $\lambda_{\nS}$ at iteration $\nS$
is random and not confined to the interval $\left]0,2\right[$ as in
traditional fixed point methods \cite{Livre1,Else01,Dong22}. 
\end{itemize}

\begin{theorem}
\label{t:21}
In the setting of Problem~\ref{prob:5}, let
$x_{\mathsf{0}}\in L^2(\upOmega,\FE,\PP;\HS)$, 
$0<\mathsf{M}\in\NN$,
$\updelta\in\left]0,1/\mathsf{M}\right[$, and
$\uprho\in\left[2,\pinf\right[$. Iterate
\begin{equation}
\label{e:p21}
\begin{array}{l}
\textup{for}\;\nS=0,1,\ldots\\
\left\lfloor
\begin{array}{l}
\XX_{\nS}=\upsigma(x_{\mathsf{0}},\dots,x_{\nS})\\
\begin{array}{l}
\textup{for}\;\iS=1,\dots,\mathsf{M}\\
\left\lfloor
\begin{array}{l}
k_{\iS,\nS}\;\textup{is a copy of}\;k\;
\textup{and is independent of}\;\XX_{\nS}\\
p_{\iS,\nS}=\TS_{k_{\iS,\nS}}x_{\nS}\\[1mm]
\end{array}
\right.\\
\end{array}\\[5mm]
(\beta_{\iS,\nS})_{1\leq\iS\leq\mathsf{M}}\;\text{are}\;
\left[0,1\right]\textup{-valued random variables such that}\\
\qquad\sum_{\iS=1}^\mathsf{M}\beta_{\iS,\nS}=1\;\Pas
\;\textup{and}\;\;
(\forall\iS\in\{1,\dots,\mathsf{M}\})\;
\beta_{\iS,\nS}\geq\updelta
\mathsf{1}_{\bigl[\norm{p_{\iS,\nS}-x_{\nS}}_{\HS}=
\max\limits_{1\leq\jS\leq\mathsf{M}}
\norm{p_{\jS,\nS}-x_{\nS}}_{\HS}\bigr]}\;\\
p_{\nS}=\sum_{\iS=1}^\mathsf{M}\beta_{\iS,\nS}p_{\iS,\nS}\\[1mm]
L_{\nS}=\dfrac{{\sum_{\iS=1}^{\mathsf{M}}\beta_{\iS,\nS}
\|p_{\iS,\nS}-x_{\nS}\|_{\HS}^2}+
\mathsf{1}_{\bigl[p_{\nS}=x_{\nS}\bigr]}}
{\|p_{\nS}-x_{\nS}\|_{\HS}^2+
\mathsf{1}_{\bigl[p_{\nS}=x_{\nS}\bigr]}}\\[5mm]
a_{\nS}=x_{\nS}+L_{\nS}(p_{\nS}-x_{\nS})\\
\textup{take}\;
\lambda_{\nS}\in L^\infty(\upOmega,\FE,\PP;\left]0,\uprho\right])\\
x_{\nS+1}=x_{\nS}+\lambda_{\nS}\brk{a_{\nS}-x_{\nS}}.
\end{array}
\right.\\
\end{array}
\end{equation}
Suppose that there exists $\upmu\in\zeroun$ such that
$\inf_{\nnn}\EE\brk{\lambda_{\nS}(2-\lambda_{\nS})}\geq\upmu$ and
that, for every $\nnn$, $\lambda_{\nS}$ is independent of 
$\upsigma(p_{1,\nS},\ldots,p_{\mathsf{M},\nS},\beta_{1,\nS},\ldots,
\beta_{\mathsf{M},\nS},x_{\mathsf{0}},\ldots,x_{\nS})$. Then the
following hold for some $x\in L^2(\upOmega,\FE,\PP;\ZS)$:
\begin{enumerate} 
\item
\label{t:21i}
$(x_{\nS})_{\nnn}$ converges weakly in 
$L^2(\upOmega,\FE,\PP;\HS)$ and weakly $\Pas$ to $x$.
\item
\label{t:21ii}
Suppose that there exists $\mathsf{S}\in\FE$ such that
\begin{equation}
\label{e:45}
\mathsf{S}\subset\menge{\upomega\in\upOmega}
{\TS_{k(\upomega)}-\Id\;\text{is demiregular at every point in}\;
\Fix\TS_{k(\upomega)}}\;\;\text{and}\;\;\PP(\mathsf{S})>0.
\end{equation}
Then $(x_{\nS})_{\nnn}$ converges strongly in 
$L^1(\upOmega,\FE,\PP;\HS)$ and strongly $\Pas$ to $x$.
\item
\label{t:21iii}
Suppose that one of the following is satisfied:
\begin{enumerate}[label=\normalfont{[\Alph*]}]
\item
\label{t:21iiiA}
There exists $\upchi\in\zeroun$ such that
\begin{equation}
(\forall\nnn)\quad\EC1{\dS_{\ZS}^2(x_{\nS+1})}{\XX_{\nS}}\leq\upchi
\dS_{\ZS}^2(x_{\nS})\;\;\Pas
\end{equation}
\item
\label{t:21iiiB}
$\boldsymbol{\TS}$ is linearly regular in the sense that there
exists $\upnu\in\left[1,\pinf\right[$ such that
\begin{equation}
\label{e:lr}
(\forall\xS\in\HS)\quad\dS_{\ZS}^2(\xS)
\leq\upnu\EE\norm{\TS_{k}\xS-\xS}_{\HS}^2
=\upnu\int_{\upOmega}\norm1{\TS_{k(\upomega)}\xS-\xS}_{\HS}^2
\PP(d\upomega),
\end{equation}
in which case we set 
$\upzeta=\inf_{\jjj}\EE\lambda_{\jS}^2$ and 
$\upchi=1-\upmu\updelta\upzeta
/(\uprho^2\upnu)$.
\end{enumerate}
Then $(x_{\nS})_{\nnn}$ converges strongly in 
$L^2(\upOmega,\FE,\PP;\HS)$ and strongly $\Pas$ to $x$, and
\begin{equation}
\label{e:21.2}
(\forall\nnn)\quad \EE\|x_{\nS}-x\|_{\HS}^2\leq 4\upchi^\nS\EE 
\dS^2_{\ZS}(x_{\mathsf{0}}).
\end{equation}
\end{enumerate}
\end{theorem}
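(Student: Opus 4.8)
The strategy is to realize \eqref{e:p21} as an instance of Algorithm~\ref{algo:4} with $\ZS$ as in Problem~\ref{prob:5}, and then to quote Theorem~\ref{t:2} (together with Theorem~\ref{t:3.5} for the linear-rate part). The first task is to choose the data $t^*_\nS$, $\eta_\nS$, $\alpha_\nS$, $\varepsilon_\nS$, $d_\nS$, $\lambda_\nS$ so that the update becomes $x_{\nS+1}=x_\nS-\lambda_\nS d_\nS$. Following the deterministic construction of \cite{Else01}, I would set $d_\nS=L_\nS(x_\nS-p_\nS)=x_\nS-a_\nS$, take $t^*_\nS=x_\nS-p_\nS$, $\alpha_\nS=L_\nS\mathsf 1_{[p_\nS\neq x_\nS]}$, and $\eta_\nS=\scal{p_\nS}{x_\nS-p_\nS}_{\HS}+\tfrac12\sum_{\iS}\beta_{\iS,\nS}\|p_{\iS,\nS}-x_\nS\|_{\HS}^2-\tfrac12\|p_\nS-x_\nS\|_{\HS}^2$ (so that $\alpha_\nS\eta_\nS=\scal{x_\nS}{d_\nS}_{\HS}-\|d_\nS\|_{\HS}^2$), and $\varepsilon_\nS=0$. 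One must check that $d_\nS\in L^2$; this uses $L_\nS\le\mathsf M/\updelta$ and the firm quasinonexpansiveness bound $\|p_{\iS,\nS}-x_\nS\|_{\HS}\le\|x_\nS-z\|_{\HS}$ for $z\in\ZS$, hence $\|p_\nS-x_\nS\|_{\HS}\le\|x_\nS-z\|_{\HS}$, giving integrability from $x_\nS\in L^2$.

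The crucial inequality to verify is the half-space membership $\scal{\mathsf z}{\EC{\alpha_\nS t^*_\nS}{\XX_\nS}}_{\HS}\le\EC{\alpha_\nS\eta_\nS}{\XX_\nS}$ $\Pas$ for every $\mathsf z\in\ZS$, equivalently $\EC{\scal{\mathsf z+d_\nS-x_\nS}{d_\nS}_{\HS}}{\XX_\nS}\le0$. Here I use that each $k_{\iS,\nS}$ is a copy of $k$ independent of $\XX_\nS$, so by Lemma~\ref{l:7} the conditional expectation of any $\mathsf f(k_{\iS,\nS},x_\nS)$ given $\XX_\nS$ equals the $\PP$-average over $k$. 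The pointwise computation reduces, by the firm-quasinonexpansiveness inequality applied to $\TS_{k_{\iS,\nS}}$ at $\mathsf z\in\Fix\TS_{k_{\iS,\nS}}$ $\Pas$, to the deterministic estimate of \cite[Section~6 / Theorem~5.x]{Else01} for the extrapolated average: $\scal{\mathsf z-p_\nS}{x_\nS-p_\nS}_{\HS}+\tfrac12\bigl(\sum_\iS\beta_{\iS,\nS}\|p_{\iS,\nS}-x_\nS\|_{\HS}^2-\|p_\nS-x_\nS\|_{\HS}^2\bigr)\le\scal{\mathsf z-x_\nS}{L_\nS(x_\nS-p_\nS)}_{\HS}+\|L_\nS(x_\nS-p_\nS)\|_{\HS}^2$, which is exactly $\scal{\mathsf z+d_\nS-x_\nS}{d_\nS}_{\HS}\le0$. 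Since $\lambda_\nS$ is independent of $\upsigma(p_{1,\nS},\dots,\beta_{\mathsf M,\nS},x_0,\dots,x_\nS)$ — hence of $\upsigma(\{x_0,\dots,x_\nS,d_\nS\})$ — Lemma~\ref{l:6} lets me insert $\EE\lambda_\nS>0$ and still obtain the required sign; together with $\EE(\lambda_\nS(2-\lambda_\nS))\ge\upmu>0$ this places us inside Algorithm~\ref{algo:4} with $\delta_\nS\equiv0$, so the summability hypotheses of Theorem~\ref{t:2} hold trivially.

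For part~\ref{t:21i}: invoking Theorem~\ref{t:2}\ref{t:2ivd} and $\inf_\nS\EE(\lambda_\nS(2-\lambda_\nS))\ge\upmu>0$ gives $\sum_\nnn\EC{\|d_\nS\|_{\HS}^2}{\XX_\nS}<\pinf$ $\Pas$, hence $d_\nS\to0$ $\Pas$ along a subsequence, and in fact $L_\nS\|p_\nS-x_\nS\|_{\HS}\to0$; since $L_\nS\ge1$ and (by the block condition $\beta_{\iS,\nS}\ge\updelta\mathsf1_{[\cdots]}$) $\|p_\nS-x_\nS\|_{\HS}^2\ge\updelta\max_\iS\|p_{\iS,\nS}-x_\nS\|_{\HS}^2\ge\updelta\|p_{\iS,\nS}-x_\nS\|_{\HS}^2$, one gets $\|\TS_{k_{\iS,\nS}}x_\nS-x_\nS\|_{\HS}\to0$. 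A Fejér-monotonicity argument analogous to \eqref{e:tn} (using Lemma~\ref{l:6} and nonexpansiveness-type bounds) upgrades this to $\lim\|p_{\iS,\nS}-x_\nS\|_{\HS}=0$ $\Pas$ for each $\iS$, and in particular for a realization with $k_{1,\nS}(\upomega)$ ranging as $k$. Then for $\PP$-a.e.\ $\upomega$, if $x_{\kS_\nS}(\upomega)\weakly\xS$, demiclosedness of $\Id-\TS_{k(\upomega)}$ at $\mathsf0$ forces $\xS\in\Fix\TS_{k(\upomega)}$, almost surely in the auxiliary randomness; combined with the Fejér convergence of $\|x_\nS-z\|_{\HS}$ this yields $\WC(x_\nS)_\nnn\subset\ZS$ $\Pas$, so Theorem~\ref{t:2}\ref{t:2weak} gives weak $\Pas$ and weak-$L^2$ convergence. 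Part~\ref{t:21ii}: on the event \eqref{e:45} demiregularity of $\TS_{k}-\Id$ promotes $x_{\nS}(\upomega)\weakly x(\upomega)$ plus $\TS_{k(\upomega)}x_\nS(\upomega)-x_\nS(\upomega)\to0$ to strong convergence on a positive-probability set; combined with the already-established $\Pas$ weak convergence and $\Pas$ convergence of $\|x_\nS-x\|_{\HS}$ via Theorem~\ref{t:3}\ref{t:3viig}, this gives strong $\Pas$ and $L^1$ convergence. Part~\ref{t:21iii}: in case~\ref{t:21iiiA} apply Theorem~\ref{t:3.5}\ref{t:3viiI} (with $\vartheta_\nS\equiv0$) to get \eqref{e:21.2}; in case~\ref{t:21iiiB}, the linear-regularity bound \eqref{e:lr}, together with $L_\nS\ge1$, the block lower bound $\beta_{\iS,\nS}\ge\updelta\mathsf1_{[\cdots]}$, and the identity $\|d_\nS\|_{\HS}^2\ge\tfrac{\updelta}{\mathsf M}\EE_k\|\TS_k x_\nS-x_\nS\|_{\HS}^2$ (averaging over the independent $k_{\iS,\nS}$ via Lemma~\ref{l:7}), yields $\EC{\|d_\nS\|_{\HS}^2}{\XX_\nS}\ge(\updelta/(\mathsf M\upnu))\dS_{\ZS}^2(x_\nS)$; plugging this and $\lambda_\nS\le\uprho$, $\EE\lambda_\nS^2\ge\upzeta$ into Theorem~\ref{t:3}\ref{t:3i}-type recursion produces the contraction $\EC{\dS_{\ZS}^2(x_{\nS+1})}{\XX_\nS}\le\upchi\,\dS_{\ZS}^2(x_\nS)$ with $\upchi=1-\upmu\updelta\upzeta/(\uprho^2\upnu)\in\zeroun$, reducing to case~\ref{t:21iiiA}.

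\textbf{Main obstacle.} The delicate step is the deterministic extrapolation inequality showing $\scal{\mathsf z+d_\nS-x_\nS}{d_\nS}_{\HS}\le0$ — i.e.\ that the extrapolated convex combination $a_\nS$ with the coefficient $L_\nS$ still lies (in the appropriate averaged sense) on the correct side of the hyperplane through $\mathsf z$; this is where one must carefully reproduce the block-iterative estimate of \cite{Else01} pointwise in $\upomega$ before taking conditional expectations, and getting the $\mathsf1_{[p_\nS=x_\nS]}$ degenerate cases and the definition of $L_\nS$ to mesh correctly requires attention. The second nontrivial point is the passage from $\varliminf\|d_\nS\|_{\HS}\to0$ to the full limit $\lim\|p_{\iS,\nS}-x_\nS\|_{\HS}=0$ for each fixed $\iS$, which needs a Robbins--Siegmund argument on $\|\TS_{k_{\iS,\nS}}x_\nS-x_\nS\|_{\HS}$ handled via Lemma~\ref{l:4}\ref{l:4i}, paralleling \eqref{e:tn}.
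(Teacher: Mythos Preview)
Your overall strategy---cast \eqref{e:p21} as an instance of Algorithm~\ref{algo:4} with $\varepsilon_\nS=0$ and invoke Theorem~\ref{t:2}/Theorem~\ref{t:3.5}---matches the paper exactly. However, two steps in your plan do not go through as written.

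\textbf{Integrability of $d_\nS$.} Your bound $L_\nS\le\mathsf M/\updelta$ is false. Take $\mathsf M=2$, $\beta_{1,\nS}=\beta_{2,\nS}=1/2$, $p_{1,\nS}-x_\nS=v$, $p_{2,\nS}-x_\nS=-(1-\epsilon)v$ with $\|v\|_{\HS}=1$; then $\|p_\nS-x_\nS\|_{\HS}^2=\epsilon^2/4$ while the numerator of $L_\nS$ is $\approx 1$, so $L_\nS\approx 4/\epsilon^2$ is unbounded. The paper instead verifies $d_\nS\in L^2$ by identifying $d_\nS=\alpha_\nS t_\nS^*$ with the $\alpha_\nS$ of Algorithm~\ref{algo:2} (not $\alpha_\nS=L_\nS$) and bounding $\mathsf 1_{[t_\nS^*\neq0]}\eta_\nS/\|t_\nS^*\|_{\HS}$ in $L^2$ via firm quasinonexpansiveness and Cauchy--Schwarz, which feeds into the generic estimate \eqref{e:999}. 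Your intermediate inequality $\|p_\nS-x_\nS\|_{\HS}^2\ge\updelta\max_\iS\|p_{\iS,\nS}-x_\nS\|_{\HS}^2$ is also false (convexity goes the other way); what is true is $\|d_\nS\|_{\HS}^2=L_\nS\sum_\iS\beta_{\iS,\nS}\|p_{\iS,\nS}-x_\nS\|_{\HS}^2\ge\updelta\max_\iS\|p_{\iS,\nS}-x_\nS\|_{\HS}^2$, using $L_\nS\ge1$.

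\textbf{The cluster-point argument for \ref{t:21i}.} Your proposed ``Robbins--Siegmund argument on $\|\TS_{k_{\iS,\nS}}x_\nS-x_\nS\|_{\HS}$ paralleling \eqref{e:tn}'' cannot work: in \eqref{e:tn} the operator $\TS$ is fixed, but here $k_{\iS,\nS+1}$ is a fresh independent draw, so there is no recursion linking consecutive residuals. Moreover, even if you obtain $\|\TS_{k_{\iS,\nS}}x_\nS-x_\nS\|_{\HS}\to0$ $\Pas$, this is the residual for a \emph{random} operator at step $\nS$, not for each fixed $\kS\in\mathsf K$, so demiclosedness does not directly yield $\WC(x_\nS)_\nnn\subset\ZS$. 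The paper's route is to use Lemma~\ref{l:7} already at this stage (not only in \ref{t:21iii}\ref{t:21iiiB}): from $\EC{\|x_{\nS+1}-x_\nS\|_{\HS}^2}{\XX_\nS}\ge\updelta\upzeta\,\EC{\|\TS_{k_{1,\nS}}x_\nS-x_\nS\|_{\HS}^2}{\XX_\nS}$ and Lemma~\ref{l:7}, the right-hand side equals $\updelta\upzeta\int_\upOmega\|\TS_{k(\upomega)}x_\nS-x_\nS\|_{\HS}^2\PP(d\upomega)$, a function of the path $\upomega'$ only. Summing in $\nS$, taking expectations, and invoking Theorem~\ref{t:2}\ref{t:2viie} gives $\sum_\nnn\int_\upOmega\|\TS_{k(\upomega)}x_\nS-x_\nS\|_{\HS}^2\PP(d\upomega)<\pinf$ $\Pas$; then for $\PP$-a.e.\ $\upomega'$ and $\PP$-a.e.\ $\upomega$, $\TS_{k(\upomega)}x_\nS(\upomega')-x_\nS(\upomega')\to0$, and demiclosedness applies to each fixed $\TS_{k(\upomega)}$ separately. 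This is the step your plan is missing.
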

\begin{proof}
We define 
\begin{equation}
\label{e:403}
(\forall\nnn)\quad
\begin{cases}
t_{\nS}^*=x_{\nS}-p_{\nS};\\
\eta_{\nS}=\Sum_{\iS=1}^{\mathsf{M}}
\beta_{\iS,\nS}\scal{p_{\iS,\nS}}{x_{\nS}-p_{\iS,\nS}}_{\HS};\\
\alpha_{\nS}=\dfrac{\mathsf{1}_{[t_{\nS}^*\neq0]}\mathsf{1}_{
\left[\scal{x_{\nS}}{t_{\nS}^*}_{\HS}>\eta_{\nS}\right]}
\bigl(\scal{x_{\nS}}{t_{\nS}^*}_{\HS}-\eta_{\nS}\bigr)}
{\|t_{\nS}^*\|_{\HS}^2+\mathsf{1}_{[t_{\nS}^*=0]}};\\
\varepsilon_{\nS}=0\;\:\Pas;\\
d_{\nS}=x_{\nS}-a_{\nS}
\end{cases}
\end{equation}
and shall show that, in this setting, the sequence
$(x_{\nS})_{\nnn}$ constructed by \eqref{e:p21} corresponds to 
one generated by Algorithm~\ref{algo:4}. Let $\nnn$. We first infer
from \eqref{e:403} and \eqref{e:p21} that
\begin{align}
d_{\nS}
&=x_{\nS}-a_{\nS}\nonumber\\
&=L_{\nS}\brk1{x_{\nS}-p_{\nS}}\nonumber\\
&=\dfrac{{\sum_{\iS=1}^{\mathsf{M}}\beta_{\iS,\nS}
\|p_{\iS,\nS}-x_{\nS}\|_{\HS}^2}+
\mathsf{1}_{\bigl[p_{\nS}=x_{\nS}\bigr]}}
{\|p_{\nS}-x_{\nS}\|_{\HS}^2+
\mathsf{1}_{\bigl[p_{\nS}=x_{\nS}\bigr]}}
\brk1{x_{\nS}-p_{\nS}}
\nonumber\\
&=\dfrac{{\sum_{\iS=1}^{\mathsf{M}}\beta_{\iS,\nS}
\|x_{\nS}-p_{\iS,\nS}\|_{\HS}^2}+
\mathsf{1}_{\bigl[t^*_{\nS}=\mathsf{0}\bigr]}}
{\|t_{\nS}^*\|_{\HS}^2+
\mathsf{1}_{\bigl[t^*_{\nS}=\mathsf{0}\bigr]}}t_{\nS}^*
\nonumber\\
&=\dfrac{{\sum_{\iS=1}^{\mathsf{M}}\beta_{\iS,\nS}
\brk1{\scal{x_{\nS}}{x_{\nS}-p_{\iS,\nS}}_{\HS}
-\scal{p_{\iS,\nS}}{x_{\nS}-p_{\iS,\nS}}_{\HS}}}}
{\|t_{\nS}^*\|_{\HS}^2+
\mathsf{1}_{\bigl[t^*_{\nS}=\mathsf{0}\bigr]}}
t_{\nS}^*
\nonumber\\
&=\dfrac{\scal{x_{\nS}}{t_{\nS}^*}_{\HS}
-\sum_{\iS=1}^{\mathsf{M}}\beta_{\iS,\nS}
\scal{p_{\iS,\nS}}{x_{\nS}-p_{\iS,\nS}}_{\HS}}
{\|t_{\nS}^*\|_{\HS}^2+
\mathsf{1}_{\bigl[t^*_{\nS}=\mathsf{0}\bigr]}}
t_{\nS}^*\nonumber\\
&=\alpha_{\nS}t_{\nS}^*\;\Pas
\label{e:998}
\end{align}
Next, let us show that
\begin{equation}
\label{e:405}
L_{\nS}\geq1\;\:\Pas
\end{equation}
Fix $\zS\in\ZS$ and, for every 
$\iS\in\{1,\ldots,\mathsf{M}\}$, let
$\upOmega_{\iS,\nS}\in\FE$ be such that 
\begin{equation}
\label{e:s7}
\PP(\upOmega_{\iS,\nS})=1\quad\text{and}\quad
(\forall\upomega\in\upOmega_{\iS,\nS})\quad
\zS\in\Fix\TS_{k_{\iS,\nS}(\upomega)}. 
\end{equation}
Thanks to \eqref{e:p21}, we then choose
$\upOmega_{\nS}\in\FE$ such that 
\begin{equation}
\PP(\upOmega_{\nS})=1\quad\text{and}\quad
(\forall\upomega\in\upOmega_{\nS})\quad
\bigcap_{1\leq\iS\leq\mathsf{M}}\Fix\TS_{k_{\iS,\nS}(\upomega)}
\neq\emp\quad\text{and}\quad
\sum_{\iS=1}^\mathsf{M}\beta_{\iS,\nS}(\upomega)=1.
\end{equation}
Given $\upomega\in\upOmega_{\nS}$, we consider the following two
cases:
\begin{itemize}
\item
Suppose that $p_{\nS}(\upomega)=x_{\nS}(\upomega)$. Then 
\cite[Proposition~2.4]{Else01} yields
$x_{\nS}(\upomega)\in\Fix(\sum_{\iS=1}^{\mathsf{M}}
\beta_{\iS,\nS}(\upomega)\TS_{k_{\iS,\nS}(\upomega)})=
\bigcap_{1\leq\iS\leq\mathsf{M}}\Fix\TS_{k_{\iS,\nS}(\upomega)}$,
hence, $(\forall\iS\in\{1,\ldots,\mathsf{M}\})$ 
$x_{\nS}(\upomega)=p_{\iS,\nS}(\upomega)$. Thus,
\begin{equation}
L_{\nS}(\upomega)=
\dfrac{{\Sum_{\iS=1}^{\mathsf{M}}\beta_{\iS,\nS}(\upomega)
\|p_{\iS,\nS}(\upomega)-x_{\nS}(\upomega)\|_{\HS}^2}+
\mathsf{1}_{\bigl[p_{\nS}=x_{\nS}\bigr]}(\upomega)}
{\|p_{\nS}(\upomega)-x_{\nS}(\upomega)\|_{\HS}^2+
\mathsf{1}_{\bigl[p_{\nS}=x_{\nS}\bigr]}(\upomega)}
=\dfrac{\mathsf{1}_{\bigl[p_{\nS}=x_{\nS}\bigr]}(\upomega)}
{\mathsf{1}_{\bigl[p_{\nS}=x_{\nS}\bigr]}(\upomega)}=1.
\end{equation}
\item
Suppose that
$p_{\nS}(\upomega)\neq x_{\nS}(\upomega)$. Then it
follows from the convexity of $\|\cdot\|_{\HS}^2$ that
\begin{equation}
0<\norm{p_{\nS}(\upomega)-x_{\nS}(\upomega)}_{\HS}^2
=\norm3{\sum_{\iS=1}^{\mathsf{M}}
\beta_{\iS,\nS}(\upomega)
\brk1{p_{\iS,\nS}(\upomega)-x_{\nS}(\upomega)}}_{\HS}^2
\leq\sum_{\iS=1}^{\mathsf{M}}\beta_{\iS,\nS}(\upomega)
\norm1{p_{\iS,\nS}(\upomega)-x_{\nS}(\upomega)}_{\HS}^2,
\end{equation}
which implies that $L_{\nS}(\upomega)\geq1$. 
\end{itemize}
In view of \eqref{e:a2}, our next task is to show by induction that
$(x_{\nS})_{\nnn}$ and $(t_{\nS}^*)_{\nnn}$ are in 
$L^2(\upOmega,\FE,\PP;\HS)$, and that $(\eta_{\nS})_{\nnn}$ is in
$L^1(\upOmega,\FE,\PP;\RR)$. To this end, let $\nS\in\NN$ and 
$\iS\in\{1,\ldots,\mathsf{M}\}$, and suppose that 
$x_{\nS}\in L^2(\upOmega,\FE,\PP;\HS)$. Then
$\TS_{k_{\iS,\nS}}x_{\nS}=
\boldsymbol{\TS}\circ(k_{\iS,\nS},x_{\nS})$ is measurable. On the
other hand, for every $\upomega\in\upOmega_{\iS,\nS}$, 
$2\TS_{k_{\iS,\nS}(\upomega)}-\Id$ is quasinonexpansive 
with $\Fix(2\TS_{k_{\iS,\nS}(\upomega)}-\Id)=
\Fix\TS_{k_{\iS,\nS}(\upomega)}$
\cite[Proposition~2.2(v)]{Else01} and hence
\begin{align}
\label{e:p21.1}
2\norm1{p_{\iS,\nS}(\upomega)}^2_{\HS}
&=\dfrac{1}{2}
\norm1{2\TS_{k_{\iS,\nS}(\upomega)}x_{\nS}(\upomega)}^2_{\HS}
\nonumber\\
&\leq\norm1{\brk1{2\TS_{k_{\iS,\nS}(\upomega)}
-\Id}x_{\nS}(\upomega)-\zS}^2_{\HS}
+\norm1{x_{\nS}(\upomega)+\zS}^2_{\HS}
\nonumber\\
&\leq\norm1{x_{\nS}(\upomega)-\zS}^2_{\HS}
+\norm1{x_{\nS}(\upomega)+\zS}^2_{\HS}.
\end{align}
Consequently, since 
$x_{\nS}\in L^2(\upOmega,\FE,\PP;\HS)$ and $\zS\in\HS$, we have
$p_{\iS,\nS}\in L^2(\upOmega,\FE,\PP;\HS)$ and \eqref{e:p21}
therefore yields
$p_{\nS}\in L^2(\upOmega,\FE,\PP;\HS)$. Thus, 
$t_{\nS}^*=x_{\nS}-p_{\nS}\in L^2(\upOmega,\FE,\PP;\HS)$.
On the other hand, it follows from the Cauchy--Schwarz inequalities
in $\HS$ as well in $L^2(\upOmega,\FE,\PP;\RR)$ that
\begin{align}
\EE\Bigl|\scal{p_{\iS,\nS}}{x_{\nS}-p_{\iS,\nS}}_{\HS}\Bigr|
\leq\EE\brk2{\bigl\|p_{\iS,\nS}\bigr\|_{\HS}
\bigl\|x_{\nS}-p_{\iS,\nS}\bigr\|_{\HS}}
\leq\sqrt{\EE\bigl\|p_{\iS,\nS}\bigr\|_{\HS}^2
\EE\bigl\|x_{\nS}-p_{\iS,\nS}\bigr\|_{\HS}^2}
<\pinf,
\end{align}
which shows that 
$\scal1{p_{\iS,\nS}}{x_{\nS}-p_{\iS,\nS}}_{\HS}
\in L^1(\upOmega,\FE,\PP;\RR)$. 
Since this is true for every $\iS\in\{1,\ldots,\mathsf{M}\}$, we 
obtain $\eta_{\nS}\in L^1(\upOmega,\FE,\PP;\RR)$. Further, it
follows from \cite[Proposition~4.2(iv)]{Livre1} that
\begin{equation}
\label{e:c1}
(\forall\iS\in\{1,\dots,\mathsf{M}\})\quad
\scal1{p_{\iS,\nS}-\zS}{x_{\nS}-p_{\iS,\nS}}_{\HS}
=\scal1{\TS_{k_{\iS,\nS}}x_{\nS}-\zS}
{x_{\nS}-\TS_{k_{\iS,\nS}}x_{\nS}}_{\HS}\geq0\;\;\Pas
\end{equation}
In turn, the concavity of
$\mathsf{y}\mapsto\scal{\mathsf{y}-\zS}
{x_{\nS}(\upomega)-\mathsf{y}}_{\HS}$ yields
\begin{equation}
\label{e:c2}
0\leq\sum_{\iS=1}^{\mathsf{M}}\beta_{\iS,\nS}
\scal1{p_{\iS,\nS}-\zS}{x_{\nS}-p_{\iS,\nS}}_{\HS}
\leq \scal1{p_{\nS}-\zS}{x_{\nS}-p_{\nS}}_{\HS}
=\scal1{x_{\nS}-t_{\nS}^*-\zS}{t_{\nS}^*}_{\HS}\;\;\Pas
\end{equation}
and therefore
\begin{align}
\dfrac{1}{2}
\EE\abs3{\dfrac{\mathsf{1}_{[t_{\nS}^*\neq0]}\mathsf{1}_{
\left[\scal{x_{\nS}}{t_{\nS}^*}_{\HS}>\eta_{\nS}\right]}\eta_{\nS}}
{\|t_{\nS}^*\|_{\HS}+\mathsf{1}_{[t_{\nS}^*=0]}}}^2
&\leq\EE\abs3{\dfrac{\eta_{\nS}
-\sum_{\iS=1}^{\mathsf{M}}\beta_{\iS,\nS}
\scal{\zS}{x_{\nS}-p_{\iS,\nS}}_{\HS}}
{\|t_{\nS}^*\|_{\HS}+\mathsf{1}_{[t_{\nS}^*=0]}}}^2
+\EE\abs3{\dfrac{\sum_{\iS=1}^{\mathsf{M}}\beta_{\iS,\nS}
\scal{\zS}{x_{\nS}-p_{\iS,\nS}}_{\HS}}
{\|t_{\nS}^*\|_{\HS}+\mathsf{1}_{[t_{\nS}^*=0]}}}^2
\nonumber\\
&=\EE\abs3{\dfrac{
\sum_{\iS=1}^\mathsf{M}\beta_{\iS,\nS}\scal{p_{\iS,\nS}
-\zS}{x_{\nS}-p_{\iS,\nS}}_{\HS}}
{\|t_{\nS}^*\|_{\HS}+\mathsf{1}_{[t_{\nS}^*=0]}}}^2
+\EE\abs3{\dfrac{\scal{\zS}{t_{\nS}^*}_{\HS}}
{\|t_{\nS}^*\|_{\HS}+\mathsf{1}_{[t_{\nS}^*=0]}}}^2
\nonumber\\
&\leq\EE\abs3{
\dfrac{\scal1{x_{\nS}-t_{\nS}^*-\zS}{t_{\nS}^*}_{\HS}}
{\|t_{\nS}^*\|_{\HS}+\mathsf{1}_{[t_{\nS}^*=0]}}}^2
+\EE\abs3{
\dfrac{\|\zS\|_{\HS}\|t_{\nS}^*\|_{\HS}}
{\|t_{\nS}^*\|_{\HS}+\mathsf{1}_{[t_{\nS}^*=0]}}}^2
\nonumber\\
&\leq\EE\abs3{
\dfrac{\|x_{\nS}-t_{\nS}^*-\zS\|_{\HS}\|t_{\nS}^*\|_{\HS}}
{\|t_{\nS}^*\|_{\HS}+\mathsf{1}_{[t_{\nS}^*=0]}}}^2
+\EE\|\zS\|_{\HS}^2
\nonumber\\
&\leq\EE\|x_{\nS}-t_{\nS}^*-\zS\|_{\HS}^2
+\|\zS\|_{\HS}^2.
\label{e:911}
\end{align}
Since $\{x_{\nS},t_{\nS}^*\}\subset L^2(\upOmega,\FE,\PP;\HS)$
and $\zS\in\HS$, we thus obtain
$\mathsf{1}_{[t_{\nS}^*\neq0]}\eta_{\nS}/
(\|t_{\nS}^*\|_{\HS}+\mathsf{1}_{[t_{\nS}^*=0]})\in
L^2(\upOmega,\FE,\PP;\RR)$. Hence, arguing as in 
\eqref{e:999}, we deduce from \eqref{e:998} that 
\begin{equation}
\label{e:404}
x_{\nS}-a_{\nS}=\alpha_{\nS}{t}_{\nS}^*
\in L^2(\upOmega,\FE,\PP;\HS). 
\end{equation}
It therefore results from \eqref{e:p21} that 
$x_{\nS+1}\in L^2(\upOmega,\FE,\PP;\HS)$, which completes the
induction argument. On the other hand, since
$\alpha_{\nS}\in\RP\;\Pas$, \eqref{e:c2} yields
\begin{equation}
\scal1{\mathsf{z}}{\alpha_{\nS}t_{\nS}^*}_{\HS}
=\alpha_{\nS}\sum_{\iS=1}^{\mathsf{M}}\beta_{\iS,\nS}
\scal1{\mathsf{z}}{x_{\nS}-p_{\iS,\nS}}_{\HS}
\leq\alpha_{\nS}\sum_{\iS=1}^{\mathsf{M}}\beta_{\iS,\nS}
\scal1{p_{\iS,\nS}}{x_{\nS}-p_{\iS,\nS}}_{\HS}
=\alpha_{\nS}\eta_{\nS}\;\;\Pas
\label{e:FejPas}
\end{equation}
Thus, appealing to Lemma~\ref{l:2} and \eqref{e:403}, we obtain
\begin{equation}
\scal2{\mathsf{z}}{\EC1{\alpha_{\nS}t_{\nS}^*}{\XX_{\nS}}}_{\HS}
=\EC2{\scal1{\mathsf{z}}{\alpha_{\nS}t_{\nS}^*}_{\HS}}{\XX_{\nS}}
\leq
\EC1{\alpha_{\nS}\eta_{\nS}}{\XX_{\nS}}+\varepsilon_{\nS}\;\;\Pas
\end{equation}
Altogether, the sequence $(x_{\nS})_{\nnn}$ constructed by 
\eqref{e:p21} corresponds to one generated by 
Algorithm~\ref{algo:4}.
Now set $\upzeta=\inf_{\jjj}\EE\lambda_{\nS}^2$ and note that
$\upzeta\geq\inf_{\jjj}\EE^2\lambda_{\jS}\geq\upmu^2/4>0$.
Hence, we infer from \eqref{e:p21} and
Lemma~\ref{l:6} that
\begin{align}
\label{e:21.32}
\EC1{\|x_{\nS+1}-x_{\nS}\|_{\HS}^2}{\XX_{\nS}} 
&=\EC2{\norm1{\lambda_{\nS}\brk1{a_{\nS}-x_{\nS}}}_{\HS}^2}
{\XX_{\nS}}\nonumber\\
&=\EC2{\norm1{\lambda_{\nS}L_{\nS}\brk1{p_{\nS}-x_{\nS}}}_{\HS}^2}
{\XX_{\nS}}\nonumber\\
&=\EC2{\abs1{\lambda_{\nS}L_{\nS}}^2
\norm1{p_{\nS}-x_{\nS}}_{\HS}^2}{\XX_{\nS}}\nonumber\\
&=\EC3{\lambda_{\nS}^2L_{\nS}
\sum_{\iS=1}^{\mathsf{M}}\beta_{\iS,\nS}
\norm1{p_{\iS,\nS}-x_{\nS}}_{\HS}^2}{\XX_{\nS}}\nonumber\\
&\geq\EC3{\lambda_{\nS}^2
\sum_{\iS=1}^{\mathsf{M}}\beta_{\iS,\nS}
\norm1{p_{\iS,\nS}-x_{\nS}}_{\HS}^2}{\XX_{\nS}}\nonumber\\
&=\brk1{\EE\lambda_{\nS}^2}\EC3{
\sum_{\iS=1}^{\mathsf{M}}\beta_{\iS,\nS}
\norm1{p_{\iS,\nS}-x_{\nS}}_{\HS}^2}{\XX_{\nS}}\nonumber\\
&\geq\upzeta\EC3{
\sum_{\iS=1}^{\mathsf{M}}\beta_{\iS,\nS}
\norm1{p_{\iS,\nS}-x_{\nS}}_{\HS}^2}{\XX_{\nS}}\nonumber\\
&\geq\upzeta
\EC2{\updelta\max_{1\leq\jS\leq\mathsf{M}}
\norm1{p_{\jS,\nS}-x_{\nS}}_{\HS}^2}{\XX_{\nS}}\nonumber\\
&\geq\updelta\upzeta\EC2{
\norm1{p_{1,\nS}-x_{\nS}}_{\HS}^2}{\XX_{\nS}}\nonumber\\
&=\updelta\upzeta\EC2{
\norm1{\TS_{k_{1,\nS}}x_{\nS}-x_{\nS}}_{\HS}^2}{\XX_{\nS}}.
\end{align}
However, since $k_{1,\nS}$ is independent of $\XX_{\nS}$, 
Lemma~\ref{l:7}
implies that, for $\PP$-almost every $\upomega'\in\upOmega$, 
\begin{align}
\EC2{\norm1{\TS_{k_{1,\nS}}x_{\nS}-x_{\nS}}_{\HS}^2}{\XX_{\nS}}
(\upomega')
&=\int_{\upOmega}\norm1{\TS_{k_{1,\nS}(\upomega)}
x_{\nS}(\upomega')-x_{\nS}(\upomega')}_{\HS}^2\PP(d\upomega)
\nonumber\\
&=\int_{\upOmega}\norm1{\TS_{k(\upomega)}
x_{\nS}(\upomega')-x_{\nS}(\upomega')}_{\HS}^2\PP(d\upomega).
\end{align}
Therefore, for $\PP$-almost every $\upomega'\in\upOmega$, 
\eqref{e:21.32} implies that 
\begin{equation}
\EC1{\|x_{\nS+1}-x_{\nS}\|_{\HS}^2}{\XX_{\nS}}(\upomega')
\geq\updelta\upzeta\int_{\upOmega}\norm1{\TS_{k(\upomega)}
x_{\nS}(\upomega')-x_{\nS}(\upomega')}_{\HS}^2\PP(d\upomega)\;\;
\Pas
\end{equation}
Upon taking the expected value in \eqref{e:21.32}, summing over
$\nnn$, and invoking Theorem~\ref{t:2}\ref{t:2viie}, we obtain 
\begin{equation}
\label{e:condb}
\EE\brk3{\sum_{\nnn}\int_{\upOmega}
\norm1{\TS_{k(\upomega)}x_{\nS}-x_{\nS}}_{\HS}^2\PP(d\upomega)}
=\sum_{\nnn}\EE\brk3{\int_{\upOmega}
\norm1{\TS_{k(\upomega)}x_{\nS}-x_{\nS}}_{\HS}^2\PP(d\upomega)}
<\pinf.
\end{equation}
Hence, 
\begin{equation}
\label{e:21.4}
\sum_{\nnn}\int_{\upOmega}\norm1{
\TS_{k(\upomega)}x_{\nS}-x_{\nS}}_{\HS}^2
\PP(d\upomega)<\pinf\;\;\Pas
\end{equation} 
Let $\upOmega'\in\FE$ such that 
$\PP(\upOmega')=1$ and
\begin{equation}
\brk1{\forall\upomega'\in\upOmega'}\quad\sum_{\nnn}
\int_{\upOmega}
\norm1{\TS_{k(\upomega)}
x_{\nS}(\upomega')-x_{\nS}(\upomega')}_{\HS}^2\PP(d\upomega)<\pinf
\quad\text{and}\quad\WC\brk1{x_{\nS}(\upomega')}_{\nnn}\neq
\emp.
\end{equation}
The existence of such a set $\upOmega'$ follows from 
\eqref{e:21.4} as well as Theorem~\ref{t:3}\ref{t:3iva}.
Fix $\upomega'\in\upOmega'$ and let $x(\upomega')\in
\WC(x_{\nS}(\upomega'))_{\nnn}$, say 
$x_{\jS_{\nS}}(\upomega')\weakly x(\upomega')$. On the other hand,
it follows from the monotone convergence theorem that
\begin{equation}
\int_{\upOmega}\sum_{\nnn}
\norm1{\TS_{k(\upomega)}x_{\nS}(\upomega')
-x_{\nS}(\upomega')}_{\HS}^2\PP(d\upomega)
=\sum_{\nnn}\int_{\upOmega}
\norm1{\TS_{k(\upomega)}x_{\nS}(\upomega')
-x_{\nS}(\upomega')}_{\HS}^2\PP(d\upomega)<\pinf.
\end{equation}
Hence, for $\PP$-almost every $\upomega\in\upOmega$, 
$\sum_{\nnn}\norm{\TS_{k(\upomega)}x_{\nS}(\upomega')
-x_{\nS}(\upomega')}_{\HS}^2<\pinf$.
Therefore, there exists 
$\upOmega''\in\FE$ such that $\PP(\upOmega'')=1$ and 
\begin{equation}
\label{e:demicl}
\brk1{\forall\upomega\in\upOmega''}\quad 
\TS_{k(\upomega)}x_{\nS}(\upomega')-
x_{\nS}(\upomega')\to 0.
\end{equation}
It then follows from the demiclosedness of the operators 
$(\Id-\TS_{\kS})_{\kS\in\mathsf{K}}$ at $\mathsf{0}$ that
\begin{equation}
\label{e:mi}
\brk1{\forall\upomega\in\upOmega''}\quad
\TS_{k(\upomega)}x(\upomega')=x(\upomega').
\end{equation}
Therefore $x(\upomega')\in\menge{\mathsf{z}\in\HS}{\mathsf{z}\in
\Fix\TS_{k}\;\Pas}=\ZS$. Since $\upomega'$ is arbitrarily taken in 
$\upOmega'$, we conclude that 
\begin{equation}
\label{e:ii}
\WC(x_{\nS})_{\nnn}\subset\ZS\;\Pas
\end{equation}

\ref{t:21i}: This follows from \eqref{e:ii} and
Theorems~\ref{t:2}\ref{t:2ivf} and \ref{t:2}\ref{t:2weak}.

\ref{t:21ii}: 
Let $\upomega'\in\upOmega'$. 
In view of \eqref{e:45}, \eqref{e:mi}, and \eqref{e:demicl}, there
exists $\FE\ni\upOmega'''\subset\upOmega''$ such that 
$\PP(\upOmega''')>0$ and
\begin{equation}
\label{e:46}
\brk1{\forall\upomega\in\upOmega'''}\quad
\begin{cases}
\TS_{k(\upomega)}-\Id\;\text{is demiregular at}\;x(\upomega');\\
\TS_{k(\upomega)}x_{\nS}(\upomega')-x_{\nS}(\upomega')\to 0.
\end{cases}
\end{equation}
However, \ref{t:21i} implies that, for $\PP$-almost every
$\upomega'\in\upOmega$, 
$x_{\nS}(\upomega')\weakly x(\upomega')$. Therefore,
by demiregularity, for $\PP$-almost every
$\upomega'\in\upOmega$, we deduce from \eqref{e:46} that
$x_{\nS}(\upomega')\to x(\upomega')$. Thus, $(x_{\nS})_{\nnn}$
converges strongly $\Pas$ to $x$. Finally, the strong convergence
in $L^1(\upOmega,\FE,\PP;\HS)$ follows from
Theorem~\ref{t:2}\ref{t:2viig}.

\ref{t:21iii}: This follows from Theorem~\ref{t:2}\ref{t:2viiI}
when \ref{t:21iiiA} holds. It remains to show that \ref{t:21iiiB}
implies \ref{t:21iiiA}. Let us first show that 
\begin{equation}
\label{e:let.0}
\upchi\in\zeroun.
\end{equation}
First, the concavity of $\upxi\mapsto\upxi(2-\upxi)$ and Jensen's
inequality yield
\begin{equation}
\label{e:let.1}
0<\upmu\leq\inf_{\nS\in\NN}
\mathsf{E}\brk1{\lambda_{\nS}(2-\lambda_{\nS})}
\leq\inf_{\nS\in\NN}
\mathsf{E}\lambda_{\nS}(2-\mathsf{E}\lambda_{\nS}).
\end{equation}
This quadratic inequality forces
\begin{equation}
\label{e:let.2}
0<1-\sqrt{1-\upmu}\leq\inf_{\nS\in\NN}\mathsf{E}\lambda_{\nS},
\end{equation}
and Jensen's inequality guarantees that
$0<\inf_{\nS\in\NN}\mathsf{E}\lambda_{\nS}^2=\upzeta$. Next, since
$\upmu\in\left]0,1\right[$,
$\updelta\in\left]0,1\right[$, $\upnu\in\left[1,\pinf\right[$,
$\uprho\in\left[2,\pinf\right[$, and
$\lambda_{\nS}\in\left]0,\uprho\right]\;\mathsf{P}$-a.s., we have
$\lambda_{\nS}^2/\uprho^2\in\left]0,1\right]\;\mathsf{P}$-a.s. and
\begin{equation}
\label{e:let.3}
\dfrac{\upzeta}{\uprho^2}
=\dfrac{\inf_{\nS\in\NN}\mathsf{E}\lambda_{\nS}^2}{\uprho^2}
\in\left]0,1\right].
\end{equation}
It follows then that
$\upmu\updelta\upzeta/(\uprho^2\upnu)\in\left]0,1\right[$ and
therefore that
$\upchi=1-\upmu\updelta\upzeta/(\uprho^2\upnu)\in\left]0,1\right[$.
Next, let $\nnn$ and $z\in L^2(\upOmega,\XX_{\nS},\PP;\ZS)$. 
Theorem~\ref{t:3}\ref{t:3ii}, the independence assumption 
for $\lambda_{\nS}$, and \eqref{e:a2} imply that
\begin{align}
\label{e:nj1}
\EC1{\norm{x_{\nS+1}-z}_{\HS}^2}{\XX_{\nS}}
&\leq\norm{x_{\nS}-z}_{\HS}^2
-\EE\brk1{\lambda_{\nS}(2-\lambda_{\nS})}
\EC1{\norm{d_{\nS}}_{\HS}^2}{\XX_{\nS}}\nonumber\\
&=\norm{x_{\nS}-z}_{\HS}^2
-\EE\brk1{\lambda_{\nS}(2-\lambda_{\nS})}
\EC3{\frac{1}{\lambda_{\nS}^2}
\norm{x_{\nS+1}-x_{\nS}}_{\HS}^2}{\XX_{\nS}}\;\;\Pas
\end{align}
Upon taking $z=\proj_{\ZS}x_{\nS}$ in \eqref{e:nj1}, 
\begin{align}
\EC1{\dS_{\ZS}^2(x_{\nS+1})}{\XX_{\nS}}
&\leq\dS_{\ZS}^2(x_{\nS})
-\EE\brk1{\lambda_{\nS}(2-\lambda_{\nS})}
\EC3{\frac{1}{\lambda_{\nS}^2}
\norm{x_{\nS+1}-x_{\nS}}_{\HS}^2}{\XX_{\nS}}\nonumber\\
&\leq\dS_{\ZS}^2(x_{\nS})
-\upmu
\EC3{\frac{1}{\lambda_{\nS}^2}
\norm{x_{\nS+1}-x_{\nS}}_{\HS}^2}{\XX_{\nS}}\nonumber\\
&\leq\dS_{\ZS}^2(x_{\nS})
-\frac{\upmu}{\uprho^2}
\EC1{\norm{x_{\nS+1}-x_{\nS}}_{\HS}^2}{\XX_{\nS}}.
\end{align}
Thus, for $\PP$-almost every $\upomega'\in\upOmega$, 
we derive from \eqref{e:21.32} that
\begin{align}
\EC1{\dS_{\ZS}^2(x_{\nS+1})}{\XX_{\nS}}(\upomega')
&\leq\dS_{\ZS}^2(x_{\nS})(\upomega')
-\frac{\upmu\updelta\upzeta}{\uprho^2}
\int_{\upOmega}\norm1{\TS_{k(\upomega)}
x_{\nS}(\upomega')-x_{\nS}(\upomega')}_{\HS}^2
\PP(d\upomega)\nonumber\\
&\leq\upchi\dS_{\ZS}^2(x_{\nS})(\upomega').
\end{align}
Hence, $\EC{\dS_{\ZS}^2(x_{\nS+1})}{\XX_{\nS}}
\leq\upchi\dS_{\ZS}^2(x_{\nS})\;\Pas$ and, in view of 
\eqref{e:let.0}, \ref{t:21iiiA} holds. The conclusion follows from 
Theorem~\ref{t:2}\ref{t:2viiI}.
\end{proof}

\begin{remark}\
\label{r:30}
\begin{enumerate}
\item
In Algorithm~\eqref{e:p21}, $\mathsf{M}$ is the batch size, i.e.,
the number of activated sets, $p_{\nS}$ is the standard average of
the selected operators, $L_{\nS}\geq 1$ is the
extrapolation parameter, $a_{\nS}$ is the extrapolated average, and
$\lambda_{\nS}$ is the relaxation parameter, which can exceed
the standard bound $2$ imposed by deterministic methods
\cite{Else01}.
\item
Problem~\ref{prob:5} is studied in \cite{Flam95} for firmly
nonexpansive operators with errors. A deterministic algorithm
which activates all the operators at each iteration via a Bochner
integral average is proposed.
The weak convergence to a solution is established; see also
\cite{Butn95} for a version in the context of projectors of
Example~\ref{ex:51}\ref{ex:51i}. This result contrasts with
Theorem~\ref{t:21} in which the convergence is guaranteed even
when a finite number of operators are activated at each iteration. 
\item
\label{r:30i}
In \eqref{e:p21}, we need not impose a lower bound on the weights
$(\beta_{\iS,\nS})_{1\leq\iS\leq\mathsf{M}}$ if we assume that, for
every $\iS\in\{1,\ldots,\mathsf{M}\}$, $\beta_{\iS,\nS}$ is 
independent of
$\upsigma(p_{\iS,\nS},x_{\mathsf{0}},\ldots,x_{\nS})$. Indeed,
in such a case, Lemma~\ref{l:6} asserts that
\begin{align}
\EC3{\sum_{\iS=1}^{\mathsf{M}}
\beta_{\iS,\nS}\norm{p_{\iS,\nS}-x_{\nS}}_{\HS}^2}{\XX_{\nS}}
&=\sum_{\iS=1}^{\mathsf{M}}\EC1{
\beta_{\iS,\nS}\norm{p_{\iS,\nS}-x_{\nS}}_{\HS}^2}{\XX_{\nS}}
\nonumber\\
&=\sum_{\iS=1}^{\mathsf{M}}\brk1{\EE\beta_{\iS,\nS}}
\EC1{\norm{p_{1,\nS}-x_{\nS}}_{\HS}^2}{\XX_{\nS}}
\nonumber\\
&=\EC1{\norm{p_{1,\nS}-x_{\nS}}_{\HS}^2}{\XX_{\nS}}.
\end{align}
\item
\label{r:30ii}
Suppose that, for every $\kS\in\mathsf{K}$, 
$\TS_{\kS}\colon\HS\to\HS$ is continuous. Then, to obtain the 
joint measurability of $\boldsymbol{\TS}$, it is enough to 
suppose that, for every $\xS\in\HS$,
$\boldsymbol{\TS}(\cdot,\xS)\colon\kS\mapsto\TS_{\kS}\xS$ is
measurable \cite[Lemma~4.51]{Alip06}.
\end{enumerate}
\end{remark}

\begin{remark}
In the literature, convergence to solutions has been established 
in specific instances of Problem~\ref{prob:5} and algorithm 
\eqref{e:p21}.
\begin{enumerate}
\item
Several works have focused on the sequential unrelaxed case, that
is, the scenario in which 
\begin{equation}
\label{e:rmk1}
\mathsf{M}=1,\;\lambda_{\nS}=1,\;\text{and therefore}\;
x_{\nS+1}=a_{\nS}=p_{\nS}=p_{1,\nS}=\mathsf{T}_{k_{1,\nS}}x_{\nS}.
\end{equation}
In the context of the projectors of
Example~\ref{ex:51}\ref{ex:51i},
\cite{Nedi10} guarantees almost sure convergence to a solution
when $\HS=\RR^{\mathsf{N}}$ and $\mathsf{K}$ is finite. This result
is also found in \cite{Bric22} and in \cite{Kost23}. The setting of
\cite{Kost23} involves a Euclidean space $\HS$ and a general 
measurable space $(\mathsf{K},\EuScript{K})$, and it also shows
convergence in $L^2(\upOmega,\FE,\PP;\HS)$. When the subsets are 
half-spaces or 
when the interior of $\ZS$ is nonempty, \cite{Nedi10} provides a 
rate for convergence in $L^2(\upOmega,\FE,\PP;\HS)$.
For general separable Hilbert spaces and under the assumption that
the operators are averaged mappings, \cite{Herm19} shows weak
almost sure convergence. In addition, a convergence rate is
established in $L^1(\upOmega,\FE,\PP;\HS)$ when \eqref{e:lr} is
satisfied. The paper \cite{Nedi11} involves deterministic
relaxations $\uplambda_{\nS}\in\left]0,2\right[$ in the context of
subgradient projectors of Example~\ref{ex:51}\ref{ex:51iv} in
$\HS=\RR^{\mathsf{N}}$. Assuming that \eqref{e:lr} holds and,
additionally, that the subgradients are uniform bounded, almost
sure convergence to a solution is established.
\item
We now discuss works that have studied algorithms for
$\mathsf{M}>1$. In \cite{Kolo22}, $\mathsf{K}$ 
is countable, extrapolations are not allowed (hence
$a_{\nS}=p_{\nS}$), 
$\lambda_{\nS}$ is a deterministic parameter in 
$\left]0,2\right]$, and the condition 
$\inte\ZS\neq\emp$ is imposed. Finite convergence is established.
In the context of projectors in
$\HS=\RR^{\mathsf{N}}$, a similar approach to
Algorithm~\ref{algo:1} is studied in \cite{Neco22} and
\cite{Neco19} with the following restrictions:
deterministic relaxations $(\uplambda_{\nS})_{\nnn}$ in 
$\left]0,2\right[$ and iteration-independent fixed deterministic
weights $\upbeta_{\iS,\nS}\equiv 1/\mathsf{M}$. Mean-square rates
of convergence are established by assuming that \eqref{e:lr} holds,
as well as ergodic convergence results. However, almost sure 
convergence is not proved.
Similarly, \cite{Neco21} and \cite{Nedi19} use a deterministic
relaxation sequence $(\uplambda_{\nS})_{\nnn}$ in
$\left]0,2\right[$ and iteration-independent fixed deterministic
weights $\upbeta_{\iS,\nS}\equiv 1/\mathsf{M}$ to solve
Problem~\ref{prob:5} in the context of subgradient projectors in
$\HS=\RR^{\mathsf{N}}$. Under linear regularity assumptions and,
additionally, uniform boundedness of the subgradients, rates of
convergence in mean-square are provided. Nevertheless, almost sure
convergence of the sequence of iterates is not guaranteed.
\end{enumerate}
\end{remark}

\begin{remark}
\label{r:57}
By combining the proofs to Theorem~\ref{t:25} and
Theorem~\ref{t:21}, it is possible to establish convergence results
for the following error-tolerant algorithm for solving
Problem~\ref{prob:5}: Let 
$x_{\mathsf{0}}\in L^2(\upOmega,\FE,\PP;\HS)$, 
$0<\mathsf{M}\in\NN$, and
$\updelta\in\left]0,1/\mathsf{M}\right[$. Iterate
\begin{equation}
\label{e:p212}
\begin{array}{l}
\text{for}\;\nS=0,1,\ldots\\
\left\lfloor
\begin{array}{l}
\XX_{\nS}=\upsigma(x_{\mathsf{0}},\dots,x_{\nS})\\
\begin{array}{l}
\text{for}\;\iS=1,\dots,\mathsf{M}\\
\left\lfloor
\begin{array}{l}
k_{\iS,\nS}\;\text{is a copy of}\;k\;\text{and is independent of}
\;\XX_{\nS}\\
\text{take}\;e_{\iS,\nS}\in L^2(\upOmega,\FE,\PP;\HS)\\
p_{\iS,\nS}=\TS_{k_{\iS,\nS}}x_{\nS}+e_{\iS,\nS}\\[1mm]
\end{array}
\right.\\
\end{array}\\[5mm]
(\beta_{\iS,\nS})_{1\leq\iS\leq\mathsf{M}}\;\text{are}\;
\left[0,1\right]\text{-valued random variables such that}\\
\qquad\sum_{\iS=1}^\mathsf{M}\beta_{\iS,\nS}=1\;\Pas
\;\text{and}\;\;
(\forall\iS\in\{1,\dots,\mathsf{M}\})\;
\beta_{\iS,\nS}\geq\updelta
\mathsf{1}_{\bigl[\norm{p_{\iS,\nS}-x_{\nS}}_{\HS}=
\max\limits_{1\leq\jS\leq\mathsf{M}}
\norm{p_{\jS,\nS}-x_{\nS}}_{\HS}\bigr]}\;\\
p_{\nS}=\sum_{\iS=1}^\mathsf{M}\beta_{\iS,\nS}p_{\iS,\nS}\\[1mm]
\text{take}\;
\lambda_{\nS}\in L^\infty(\upOmega,\FE,\PP;\left]0,2\right[)\\
x_{\nS+1}=x_{\nS}+\lambda_{\nS}\brk{p_{\nS}-x_{\nS}}.
\end{array}
\right.\\
\end{array}
\end{equation}
Suppose that
$\inf_{\nnn}\EE\brk{\lambda_{\nS}(2-\lambda_{\nS})}>0$,
$\max_{1\leq\iS\leq\mathsf{M}}\sum_{\nnn}
\sqrt{\EE\norm{e_{\iS,\nS}}_{\HS}^2}<\pinf$,
and that, for every $\nnn$, $\lambda_{\nS}$ is independent of 
$\upsigma(k_{1,\nS},\ldots,k_{\mathsf{M},\nS},e_{1,\nS},\ldots,
e_{\mathsf{M},\nS},\beta_{1,\nS},\ldots,
\beta_{\mathsf{M},\nS},x_{\mathsf{0}},\ldots,x_{\nS})$.
Then there exists $x\in L^2(\upOmega,\FE,\PP;\ZS)$ such that
$(x_{\nS})_{\nnn}$ converges weakly in 
$L^2(\upOmega,\FE,\PP;\HS)$ and weakly $\Pas$ to $x$.
\end{remark}

\section{Numerical experiments}
\label{sec:6}

We illustrate numerically our results in the context of
Problem~\ref{prob:5} with applications of the stochastic
algorithm~\eqref{e:p21} with the deterministic relaxation
strategies
\begin{equation}
\label{e:ex-1}
(\forall\nnn)\quad\lambda_{\nS}=1.0
\end{equation}
and
\begin{equation}
\label{e:ex0}
(\forall\nnn)\quad\lambda_{\nS}=1.9.
\end{equation}
We also consider the random relaxation strategies
\begin{equation}
\label{e:ex1}
(\forall\nnn)\quad\PP([\lambda_{\nS}=2.3])=\dfrac{1}{2}\;
\text{and}\;\PP([\lambda_{\nS}=1.5])=\dfrac{1}{2}
\end{equation}
and
\begin{equation}
\label{e:ex2}
(\forall\nnn)\quad\lambda_{\nS}\sim\unif([1.5,2.3]).
\end{equation}
Note that \eqref{e:ex1} and \eqref{e:ex2} are super relaxation
strategies that satisfy, for every $\nnn$,
$\EE(\lambda_{\nS}(2-\lambda_{\nS}))>0$,
$\PP([\lambda_{\nS}>2])>0$, and $\EE\lambda_{\nS}=1.9$. 
Problem~\ref{prob:5} is specialized to the standard Euclidean space 
$\HS=\RR^\mathsf{N}$ with $\norm{\cdot}_{\HS}=\norm{\cdot}$,
$\mathsf{K}=\{1,\ldots,\mathsf{p}\}$, and
$k\sim\textrm{uniform}(\mathsf{K})$. 

\begin{problem}
\label{prob:6}
For every $\kS\in\{1,\ldots,\mathsf{p}\}$,
$\mathsf{f}_{\kS}\colon\RR^{\mathsf{N}}\to\RR$ is a convex 
function and $\mathsf{C}_{\kS}=\menge{\xS\in\RR^{\mathsf{N}}}
{\mathsf{f}_{\kS}(\xS)\leq 0}$. It is assumed that
$\ZS=\bigcap_{1\leq\kS\leq\mathsf{p}}\CS_{\kS}\neq\emp$. 
The task is to
\begin{equation}
\label{e:p1}
\text{find}\;\xS\in\RR^{\mathsf{N}}\;\text{such that}\;\xS\in\ZS.
\end{equation}
\end{problem}

Consider the setting of Problem~\ref{prob:6}. For every
$\kS\in\{1,\ldots,\mathsf{p}\}$, let
$\TS_{\kS}\colon\RR^{\mathsf{N}}\to\RR^{\mathsf{N}}$ be the
subgradient projector onto $\mathsf{C}_{\kS}$ of
Example~\ref{ex:51}\ref{ex:51iv}, so that,
by \cite[Propositions~16.20 and 29.41]{Livre1}
\begin{equation}
\TS_{\kS}\;\text{is firmly quasinonexpansive},\;
\Fix\TS_{\kS}=\CS_{\kS},\;\text{and}\;
\Id-\TS_{\kS}\;\text{is demiclosed at $0$}.
\end{equation}
Subgradient projectors extend the classical projection operators in
the following sense. Let $\CS$ be a nonempty closed and convex
subset of $\RR^{\mathsf{N}}$ and suppose that 
$\mathsf{f}_{\kS}=\dS_{\CS}$. Then $\CS_{\kS}=\CS$
and $\GS_{\kS}=\proj_{\CS}$ \cite[Example~29.44]{Livre1}.
Their importance in solving Problem~\ref{prob:6} stems from the
fact that they are generally much easier to
implement than exact ones.

\subsection{Signal restoration}
\label{sec:61}

\begin{figure}[b!]
\centering
\begin{tikzpicture}[scale=0.545]
\definecolor{darkgray176}{RGB}{176,176,176}
\begin{axis}[height=7cm, width=29cm, legend columns=1 
columns=1 
cell align={left}, xmin=0, xmax=1024, ymin=-1.4, ymax=1.1,
xtick distance=100,
ytick distance=0.4,
x grid style={darkgray176},xmajorgrids,
y grid style={darkgray176},ymajorgrids,
tick label style={font=\Large}, 
legend cell align={left},
legend style={at={(0.02,0.15)},anchor=west},
axis line style=thick,]
\addplot [ultra thick, dred]
table {figures/ex1/original_signal.txt};
\end{axis}
\end{tikzpicture}\\
(a)\\[0.3em]
\begin{tikzpicture}[scale=0.545]
\definecolor{darkgray176}{RGB}{176,176,176}
\begin{axis}[height=7cm, width=29.0cm, legend columns=1 
columns=1 
cell align={left}, xmin=0, xmax=1024, ymin=-1.4, ymax=1.1,
xtick distance=100,
ytick distance=0.4,
x grid style={darkgray176},xmajorgrids,
y grid style={darkgray176},ymajorgrids,
tick label style={font=\Large}, 
legend cell align={left},
legend style={at={(0.02,0.15)},anchor=west},
axis line style=thick,]
\addplot [ultra thick, dblue]
table {figures/ex1/err1.txt};
\end{axis}
\end{tikzpicture}\\
(b)\\[0.3em]
\begin{tikzpicture}[scale=0.545]
\definecolor{darkgray176}{RGB}{176,176,176}
\begin{axis}[height=7cm, width=29.0cm, legend columns=1 
legend columns=1 
cell align={left}, xmin=0, xmax=1024, ymin=-1.4, ymax=1.1,
xtick distance=100,
ytick distance=0.4,
x grid style={darkgray176},xmajorgrids,
y grid style={darkgray176},ymajorgrids,
tick label style={font=\Large}, 
legend cell align={left},
legend style={at={(0.02,0.15)},anchor=west},
axis line style=thick,]
\addplot [ultra thick, teal]
table {figures/ex1/extrap_1.txt};
\end{axis}
\end{tikzpicture}\\
(c)\\[0.3em]
\caption{Experiment of Section~\ref{sec:61}.
(a): Original signal $\overline{\mathsf{x}}$.
(b): Noisy observation $r_1$.
(c): Solution produced by algorithm~\eqref{e:p21}.}
\label{fig:sign}
\end{figure}
The goal is to recover the original signal 
$\overline{\mathsf{x}}\in\RR^\mathsf{N}$ ($\mathsf{N}=1024$) 
shown in Fig.~\ref{fig:sign}(a) from $20$ noisy
observations 
$(r_{\kS})_{1\leq\kS\leq 20}$ given by
\begin{equation}
\label{e:exp1}
(\forall\kS\in\{1,\dots,20\})\quad
r_{\kS}=\LS_{\kS}\overline{\mathsf{x}}+w_{\kS}
\end{equation}
where $\LS_{\kS}\colon\RR^\mathsf{N}\to\RR^\mathsf{N}$ is a known
linear operator, $\upeta_{\kS}\in\RPP$, and 
$w_{\kS}\in\left[-\upeta_{\kS},\upeta_{\kS}\right]^\mathsf{N}$ is
a bounded random noise vector. The parameters
$(\upeta_{\kS})_{1\leq\kS\leq20}\in\RPP^{20}$ are
known. The operators $(\LS_{\kS})_{1\leq\kS\leq 20}$ 
are Gaussian convolution filters with zero mean and standard
deviation taken uniformly in $[10,30]$, $\upeta_{\kS}=0.15$, and
$w_{\kS}$ is taken uniformly
in $\left[-\upeta_{\kS},\upeta_{\kS}\right]^\mathsf{N}$.
Set, for every $\kS\in\{1,\dots,20\}$ and every
$\jS\in\{1,\dots,\mathsf{N}\}$, 
\begin{equation}
\CS_{\kS,\jS}=\menge{\xS\in\RR^{\mathsf{N}}}
{-\upeta_{\nS}\leq\scal{\LS_{\kS}\xS-r_{\kS}}{\mathsf{e}_{\jS}}
\leq\upeta_{\nS}}. 
\end{equation}
Since the intersection of these sets is nonempty and 
their projectors are computable explicitly 
\cite[Example~29.21]{Livre1}, we solve the feasibility problem
\begin{equation}
\label{e:exp3.3}
\text{find}\;\xS\in\RR^\mathsf{N}\;\text{such that}\quad
(\forall\kS\in\{1,\ldots,20\})
(\forall\jS\in\{1,\ldots,\mathsf{N}\})\;\;
\xS\in\CS_{\kS,\jS}
\end{equation}
by algorithm~\eqref{e:p21} implemented with exact projectors.
We run two instances with $x_{\mathsf{0}}=\mathsf{0}$.
In the first one, $\mathsf{M}=1$. Note that the relaxation 
scheme of \eqref{e:ex-1} leads to the
almost sure convergence result of \cite{Nedi10} (see also 
\cite{Kost23}), while the relaxation schemes
\eqref{e:ex0}--\eqref{e:ex2} are new even in this specialized
context of randomly activated projection method. In the second
instance $\mathsf{M}=16$. Fig.~\ref{fig:ex3.2} displays the
normalized error versus execution time.

Fig.~\ref{fig:ex3.2} (top) shows the benefits of large relaxations
when $\mathsf{M}=1$. Fig.~\ref{fig:ex3.2} (bottom) shows the
advantage of using $\mathsf{M}>1$ random blocks, in which case the
extrapolation parameter $L_{\nS}$
is not equal to $1$ and can attain large values. This behavior was
previously observed for deterministic algorithms
\cite{Nume06,Coap12,Imag97,Pier84}. Fig.~\ref{fig:ex3.2} also
suggests that, on a single run, the use of the proposed random
super relaxation scheme can further improve the speed of
convergence. It is worth noting that the execution time can
naturally be reduced if Algorithm~\ref{algo:1} is
implemented on a multi-core architecture where, at each iteration,
each (subgradient) projector is assigned to a dedicated core and
all the cores work in parallel.

\begin{figure}[hb!]
\centering
\begin{tikzpicture}[scale=0.545]
\definecolor{darkgray176}{RGB}{176,176,176}
\begin{axis}[height=8cm, width=14cm, legend columns=1 
cell align={left}, xmin=0, xmax=10, ymin=-90, ymax=-20,
xtick distance=2,
ytick distance=20,
x grid style={darkgray176},xmajorgrids,
y grid style={darkgray176},ymajorgrids,
tick label style={font=\Large}, 
legend cell align={left},
legend style={at={(0.02,0.15)},anchor=west},
axis line style=thick,]
\addplot [ultra thick, ngreen]
table {figures/ex1/M1Lam1a.txt};
\addplot [ultra thick, hotmagenta]
table {figures/ex1/M1Lam19a.txt};
\addplot [ultra thick, azure]
table {figures/ex1/M1LamBIa.txt};
\addplot [ultra thick, Brown]
table {figures/ex1/M1LamUNIa.txt};
\end{axis}
\end{tikzpicture}
\begin{tikzpicture}[scale=0.545]
\definecolor{darkgray176}{RGB}{176,176,176}
\begin{axis}[height=8cm, width=14cm, legend columns=1 
cell align={left}, xmin=0, xmax=10, ymin=-90, ymax=-20,
xtick distance=2,
ytick distance=20,
x grid style={darkgray176},xmajorgrids,
y grid style={darkgray176},ymajorgrids,
tick label style={font=\Large}, 
legend cell align={left},
legend style={at={(0.02,0.15)},anchor=west},
axis line style=thick,]
\addplot [ultra thick, ngreen]
table {figures/ex1/M1Lam1o.txt};
\addplot [ultra thick, hotmagenta]
table {figures/ex1/M1Lam19o.txt};
\addplot [ultra thick, azure]
table {figures/ex1/M1LamBIo.txt};
\addplot [ultra thick, Brown]
table {figures/ex1/M1LamUNIo.txt};
\end{axis}
\end{tikzpicture}\\
\begin{tikzpicture}[scale=0.545]
\definecolor{darkgray176}{RGB}{176,176,176}
\hspace{-1.6mm}
\begin{axis}[height=8cm, width=14cm, legend columns=1 
cell align={left}, xmin=0, xmax=4, ymin=-90, ymax=-20,
xtick distance=1,
ytick distance=20,
x grid style={darkgray176},xmajorgrids,
y grid style={darkgray176},ymajorgrids,
tick label style={font=\Large}, 
legend cell align={left},
legend style={at={(0.02,0.15)},anchor=west},
axis line style=thick,]
\addplot [ultra thick, ngreen]
table {figures/ex1/M16Lam1a.txt};
\addplot [ultra thick, hotmagenta]
table {figures/ex1/M16Lam19a.txt};
\addplot [ultra thick, azure]
table {figures/ex1/M16LamBIa.txt};
\addplot [ultra thick, Brown]
table {figures/ex1/M16LamUNIa.txt};
\end{axis}
\end{tikzpicture}
\begin{tikzpicture}[scale=0.545]
\definecolor{darkgray176}{RGB}{176,176,176}
\begin{axis}[height=8cm, width=14cm, legend columns=1 
cell align={left}, xmin=0, xmax=4, ymin=-90, ymax=-20,
xtick distance=1,
ytick distance=20,
x grid style={darkgray176},xmajorgrids,
y grid style={darkgray176},ymajorgrids,
tick label style={font=\Large}, 
legend cell align={left},
legend style={at={(0.02,0.15)},anchor=west},
axis line style=thick,]
\addplot [ultra thick, ngreen]
table {figures/ex1/M16Lam1o.txt};
\addplot [ultra thick, hotmagenta]
table {figures/ex1/M16Lam19o.txt};
\addplot [ultra thick, azure]
table {figures/ex1/M16LamBIo.txt};
\addplot [ultra thick, Brown]
table {figures/ex1/M16LamUNIo.txt};
\end{axis}
\end{tikzpicture}\\
\caption{Experiment of Section~\ref{sec:61}. 
Normalized error $20\log(\|x_{\nS}-x_{\infty}\|/
\|x_{\mathsf{0}}-x_{\infty}\|)$ (dB) versus execution time (s) on
single processor machine for various relaxation strategies.
{\color{ngreen} Green}: \eqref{e:ex-1}.
{\color{hotmagenta} Magenta}: \eqref{e:ex0}. 
{\color{azure} Blue}: \eqref{e:ex1}. 
{\color{Brown} Brown}: \eqref{e:ex2}. 
Left: Average over ten runs.
Right: A single run.
Top: $\mathsf{M}=1$. 
Bottom: $\mathsf{M}=16$.
}
\label{fig:ex3.2}
\end{figure}

\subsection{Image restoration}
\label{sec:62}

The goal is to recover the original image
$\bar{\xS}\in\RR^{\mathsf{N}\times\mathsf{N}}$ $(\mathsf{N}=256)$ 
shown in Fig.~\ref{fig:ex2.1}(a) from four observations
$\{r_1,\ldots,r_4\}$ which are given by the
degradation of $\bar{\xS}$ via a Gaussian kernel with a standard
deviation of $8$ and the
addition of random noise. The noise distribution is 
$\unif([0,5]^{\mathsf{N}\times\mathsf{N}})$. Let $\LS$ be the 
block-Toeplitz matrix associated with the convolutional blur. Then 
\begin{equation}
(\forall\kS\in\{1,2,3,4\})\quad
r_{\kS}=\LS\bar{\xS}+w_{\kS},\quad
\text{where}\quad
w_{\kS}\sim\unif\brk1{[0,5]^{\mathsf{N}\times\mathsf{N}}}.
\end{equation}
\begin{figure}[t!]
\centering
\begin{tabular}{c@{}c@{}c@{}}
\includegraphics[width=5.0cm,height=5.0cm]{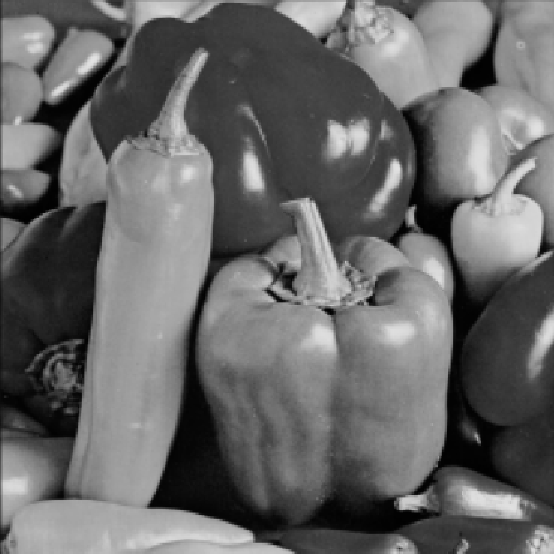}&
\hspace{0.1cm}
\includegraphics[width=5.0cm,height=5.0cm]{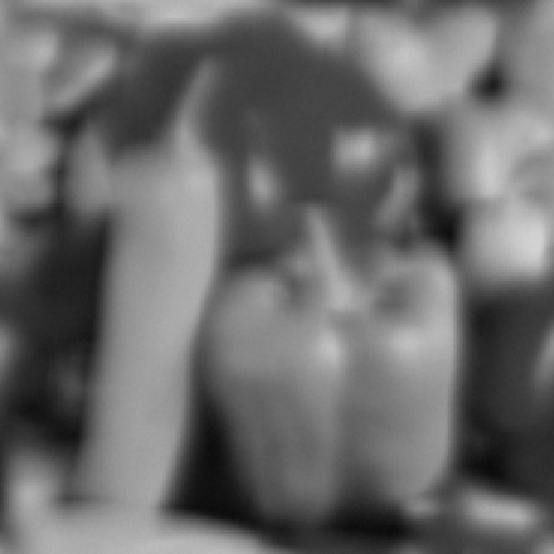}&
\hspace{0.1cm}
\includegraphics[width=5.0cm,height=5.0cm]{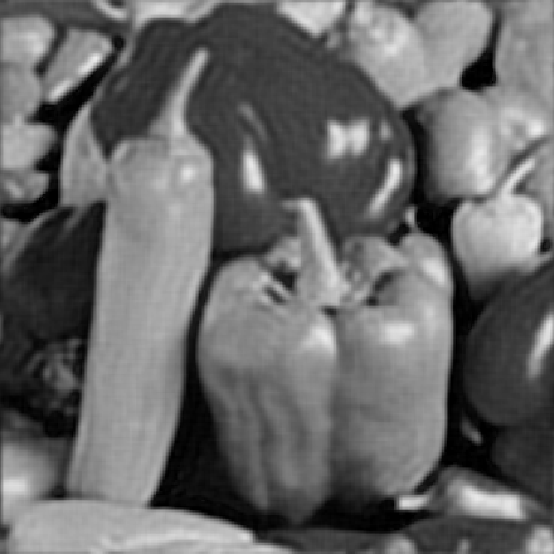}\\
\small{(a)}&\small{(b)}&\small{(c)}
\end{tabular} 
\caption{
Experiment of Section~\ref{sec:62}. 
(a) Original image $\bar{\xS}$. 
(b) Noisy observation $r_1$.
(c) Solution produced by algorithm~\eqref{e:p21}.}
\label{fig:ex2.1}
\end{figure}%
The entries of the random vectors $(w_{\kS})_{1\leq\kS\leq4}$ are 
i.i.d. Therefore, as shown in \cite{Sign91}, for every
$\kS\in\{1,2,3,4\}$, 
with a $95\%$ confidence coefficient 
\begin{equation}
\bar{\xS}\in\mathsf{C}_{\kS}
=\menge{\xS\in\RR^{\mathsf{N}\times\mathsf{N}}}
{\norm{r_{\kS}-\LS\xS}^2\leq\upxi}, 
\end{equation}
where $\upxi=\mathsf{N}^2\EE\abs{u}^2+
1.96\mathsf{N}\sqrt{\EE\abs{u}^4-\EE^2\abs{u}^2}$ 
with $u\sim\unif([0,5])$.
For every $\kS\in\{1,2,3,4\}$, we compute the subgradient projector
onto $\CS_{\kS}$ in \eqref{e:sgp} via the function
$\mathsf{f}_{\kS}\colon\xS\mapsto\norm{r_{\kS}-\LS\xS}^2-
\upxi$. 
In addition, the boundedness on pixel values is incorporated as the
property set $\CS_5=[0,255]^{\mathsf{N}\times\mathsf{N}}$. Finally,
it is assumed that the discrete Fourier transform 
$\mathfrak{F}(\bar{\xS})$ of $\bar{\xS}$ is
known on a portion of its support for low frequencies in both
directions. That is, let $\mathsf{S}$ be the set of frequency pairs
$\{0,\ldots,\mathsf{N}/8-1\}^2$ as well as those resulting from the
symmetry properties of the 2D discrete Fourier transform of real
images. The associated set is 
$\CS_6=\menge{\xS\in\RR^{\mathsf{N}\times\mathsf{N}}}
{\mathfrak{F}(\xS)\mathsf{1}_{\mathsf{S}}
=\mathfrak{F}(\bar{\xS})\mathsf{1}_{\mathsf{S}}}$ and its
projection is given by 
$\proj_{\CS_6}\colon\xS\mapsto
\mathfrak{F}^{-1}(\mathfrak{F}(\bar{\xS})\mathsf{1}_{\mathsf{S}}+
\mathfrak{F}(\xS)\mathsf{1}_{\complement\mathsf{S}})$.
We run algorithm~\eqref{e:p21} with $x_{\mathsf{0}}=\mathsf{0}$ and
$\mathsf{M}=2$. 
Fig.~\ref{fig:ex2.2} displays the normalized error versus 
execution time. These results confirm the conclusions of 
Section~\ref{sec:61}.

\begin{figure}[h!]
\centering
\begin{tikzpicture}[scale=0.545]
\definecolor{darkgray176}{RGB}{176,176,176}
\begin{axis}[height=8cm, width=14cm, legend columns=1 
cell align={left}, xmin=0, xmax=325, ymin=-90, ymax=-40,
xtick distance=50,
ytick distance=10,
x grid style={darkgray176},xmajorgrids,
y grid style={darkgray176},ymajorgrids,
tick label style={font=\Large}, 
legend cell align={left},
legend style={at={(0.02,0.15)},anchor=west},
axis line style=thick,]
\addplot [ultra thick, ngreen]
table {figures/ex2/M2Lam1a.txt};
\addplot [ultra thick, hotmagenta]
table {figures/ex2/M2Lam19a.txt};
\addplot [ultra thick, azure]
table {figures/ex2/M2LamBIa.txt};
\addplot [ultra thick, Brown]
table {figures/ex2/M2LamUNIa.txt};
\end{axis}
\end{tikzpicture}
\begin{tikzpicture}[scale=0.545]
\definecolor{darkgray176}{RGB}{176,176,176}
\begin{axis}[height=8cm, width=14cm, legend columns=1 
cell align={left}, xmin=0, xmax=325, ymin=-90, ymax=-40,
xtick distance=50,
ytick distance=10,
x grid style={darkgray176},xmajorgrids,
y grid style={darkgray176},ymajorgrids,
tick label style={font=\Large}, 
legend cell align={left},
legend style={at={(0.02,0.15)},anchor=west},
axis line style=thick,]
\addplot [ultra thick, ngreen]
table {figures/ex2/M2Lam1o.txt};
\addplot [ultra thick, hotmagenta]
table {figures/ex2/M2Lam19o.txt};
\addplot [ultra thick, azure]
table {figures/ex2/M2LamBIo.txt};
\addplot [ultra thick, Brown]
table {figures/ex2/M2LamUNIo.txt};
\end{axis}
\end{tikzpicture}
\caption{Experiment of Section~\ref{sec:62} using $\mathsf{M}=2$. 
Normalized error $20\log(\|x_{\nS}-x_{\infty}\|/
\|x_{\mathsf{0}}-x_{\infty}\|)$ (dB) versus execution time (s)
on a single processor machine for various relaxation strategies.
{\color{ngreen} Green}: \eqref{e:ex-1}.
{\color{hotmagenta} Magenta}: \eqref{e:ex0}. 
{\color{azure} Blue}: \eqref{e:ex1}.
{\color{Brown} Brown}: \eqref{e:ex2}.
Left: Average over ten runs.
Right: A single run.
}
\label{fig:ex2.2}
\end{figure}

\medskip
\noindent
{\bfseries Acknowledgment.} The authors thank Dr. Minh N. B\`ui for 
bringing to their attention a gap in the proof of an earlier
version of Theorem~\ref{t:21}.

\newpage


\begin{thebibliography}{99}
\setlength{\itemsep}{0pt}
\small

\bibitem{Alip06}
C. D. Aliprantis and K. C. Border,
{\em Infinite Dimensional Analysis}, 3rd ed.
Springer, New York, 2006.

\bibitem{Sico10} 
H. Attouch, L. M. Brice\~no-Arias, and P. L. Combettes,
A parallel splitting method for coupled monotone inclusions,
{\em SIAM J. Control Optim.,}
vol. 48, pp. 3246--3270, 2010. 

\bibitem{Bail78} 
J.-B. Baillon, R. E. Bruck, and S. Reich,
On the asymptotic behavior of nonexpansive mappings and semigroups
in Banach spaces,
{\em Houston J. Math.}, 
vol. 4, pp. 1--9, 1978.

\bibitem{Mor1}
H. H. Bauschke and P. L. Combettes, 
A weak-to-strong convergence principle for Fej\'er-monotone
methods in Hilbert spaces,
{\em Math. Oper. Res.},
vol. 26, pp. 248--264, 2001.

\bibitem{Livre1} 
H. H. Bauschke and P. L. Combettes, 
{\em Convex Analysis and Monotone Operator Theory in Hilbert 
Spaces}, 2nd ed. 
Springer, New York, 2017.

\bibitem{Nume06}
H. H. Bauschke, P. L. Combettes, and S. G. Kruk,
Extrapolation algorithm for affine-convex feasibility problems,
{\em Numer. Algorithms},
vol. 41, pp. 239--274, 2006.

\bibitem{Blum54}
J. R. Blum, 
Multidimensional stochastic approximation methods, 
{\em Ann. Math. Stat.}, 
vol. 25, pp. 737--744, 1954.

\bibitem{Brav24}
M. Bravo and R. Cominetti, 
Stochastic fixed-point iterations for nonexpansive maps: 
Convergence and error bounds,
{\em SIAM J. Control Optim.},
vol. 62, pp. 191--219, 2024.

\bibitem{Bric22} 
L. Brice\~{n}o-Arias, J. Deride, and C. Vega, 
Random activations in primal-dual splittings for monotone
inclusions with a priori information,
{\em J. Optim. Theory Appl.},
vol. 192, pp. 56--81, 2022.

\bibitem{Butn95} 
D. Butnariu and S. D. Fl{\aa}m, 
Strong convergence of expected-projection methods in Hilbert 
spaces,
{\em Numer. Funct. Anal. Optim.},
vol. 16, pp. 601--636, 1995.

\bibitem{Byrn14} 
C. L. Byrne,
{\em Iterative Optimization in Inverse Problems.}
CRC Press, Boca Raton, FL, 2014.

\bibitem{Coap12}
Y. Censor, W. Chen, P. L. Combettes, R. Davidi, and G. T. Herman, 
On the effectiveness of projection methods for convex feasibility
problems with linear inequality constraints,
{\em Comput. Optim. Appl.}, 
vol. 51, pp. 1065--1088, 2012.

\bibitem{Chen24}
Y.-L. Chen, S. Na, and M. Kolar,
Convergence analysis of accelerated stochastic gradient descent
under the growth condition,
{\em Math. Oper. Res.},
vol. 49, pp. 2492--2526, 2024.

\bibitem{Aiep96} 
P. L. Combettes,
The convex feasibility problem in image recovery,
in: {\em Advances in Imaging and Electron Physics} 
(P. Hawkes, Ed.),
vol. 95, pp. 155--270. Academic Press, New York, 1996.

\bibitem{Imag97}
P. L. Combettes,
Convex set theoretic image recovery by extrapolated iterations of
parallel subgradient projections,
{\em IEEE Trans. Image Process.},
vol. 6, pp. 493--506, 1997.

\bibitem{Else01} 
P. L. Combettes,
Quasi-Fej\'erian analysis of some optimization algorithms, in:
{\em Inherently Parallel Algorithms for Feasibility and 
Optimization}, 
(D. Butnariu, Y. Censor, and S. Reich, eds.),
pp. 115--152. Elsevier, New York, 2001.

\bibitem{Opti04}
P. L. Combettes, 
Solving monotone inclusions via compositions of nonexpansive 
averaged operators,
{\em Optimization},
vol. 53, pp. 475--504, 2004.

\bibitem{Acnu24} 
P. L. Combettes,
The geometry of monotone operator splitting methods,
{\em Acta Numer.},
vol. 33, pp. 487--632, 2024.

\bibitem{Sadd25} 
P. L. Combettes and J. I. Madariaga,
Asymptotic analysis of an abstract stochastic scheme for solving
monotone inclusions,
arxiv, 2025. 
\url{https://arxiv.org/pdf/2512.03023}

\bibitem{Siop15} 
P. L. Combettes and J.-C. Pesquet, 
Stochastic quasi-Fej\'er block-coordinate fixed point iterations
with random sweeping,
{\em SIAM J. Optim.}, 
vol. 25, pp. 1221--1248, 2015.

\bibitem{Pafa16}
P. L. Combettes and J.-C. Pesquet, 
Stochastic approximations and perturbations in
forward-backward splitting for monotone operators,
{\em Pure Appl. Funct. Anal.},
vol. 1, pp. 13--37, 2016.

\bibitem{MaPr19}
P. L. Combettes and J.-C. Pesquet,
Stochastic quasi-Fej\'er block-coordinate fixed point iterations
with random sweeping II: Mean-square and linear convergence,
{\em Math. Program.},
vol. B174, pp. 433--451, 2019.

\bibitem{Sign91}
P. L. Combettes and H. J. Trussell,
The use of noise properties in set theoretic estimation,
{\em IEEE Trans. Signal Process.},
vol. 39, pp. 1630--1641, 1991.

\bibitem{Davi17}
D. Davis and W. Yin, 
A three-operator splitting scheme and its optimization
applications,
{\em Set-Valued Var. Anal.},
vol. 25, pp. 829--858, 2017.

\bibitem{Dong22} 
Q.-L. Dong, Y. J. Cho, S. He, P. M. Pardalos, and T. M. Rassias, 
{\em The Krasnosel'ski\u\i--Mann Iterative Method -- Recent
Progress and Applications}.
Springer, New York, 2022.

\bibitem{Ermo66}
Yu. M. Ermoliev and Z. V. Nekrylova, 
Some methods of stochastic optimization,
{\em Kibernetika (Kiev)}, 
vol. 1966, pp. 96--98, 1966.

\bibitem{Flam95} 
S. D. Fl{\aa}m,
Successive averages of firmly nonexpansive mappings,
{\em Math. Oper. Res.},
vol. 20, pp. 497--512, 1995.

\bibitem{Goeb84} 
K. Goebel and S. Reich,
{\em Uniform Convexity, Hyperbolic Geometry, and Nonexpansive
Mappings}.
Marcel Dekker, New York, 1984.  

\bibitem{Groe72} 
C. W. Groetsch, 
A note on segmenting Mann iterates,
{\em J. Math. Anal. Appl.},
vol. 40, pp. 369--372, 1972.

\bibitem{Herm19}
N. Hermer, D. R. Luke, and A. Sturm,
Random function iterations for consistent stochastic feasibility,
{\em Numer. Funct. Anal. Optim.},
vol. 40, pp. 386--420, 2019.

\bibitem{Hyto16}
T. Hyt\"onen, J. van Neerven, M. Veraar, and L. Weis,
{\em Analysis in Banach Spaces. Volume I: Martingales and 
Littlewood--Paley Theory}.
Springer, New York, 2016.

\bibitem{Jofr19}
A. Jofr\'e and P. Thompson, 
On variance reduction for stochastic smooth convex optimization
with multiplicative noise,
{\em Math. Program.},
vol. A174, pp. 253--292, 2019. 

\bibitem{Kolo22}
V. I. Kolobov, S. Reich, and R. Zalas,
Finitely convergent deterministic and stochastic iterative methods 
for solving convex feasibility problems,
{\em Math. Program.},
vol. A194, pp. 1163--1183, 2022.

\bibitem{Kost23}
V. R. Kosti\'c and S. Salzo,
The method of randomized Bregman projections for stochastic
feasibility problems,
{\em Numer. Algorithms}, 
vol. 93, pp. 1269--1307, 2023.

\bibitem{Kras55} 
M. A. Krasnosel'ski\u\i, 
Two remarks on the method of successive approximations, 
{\em Uspekhi Mat. Nauk},
vol. 10, pp. 123--127, 1955.

\bibitem{Ledo91}
M. Ledoux and M. Talagrand,
{\em Probability in Banach Spaces: Isoperimetry and Processes}.
Springer, New York, 1991.

\bibitem{Lion79} 
P.-L. Lions and B. Mercier, 
Splitting algorithms for the sum of two nonlinear operators,
{\em SIAM J. Numer. Anal.},
vol. 16, pp. 964--979, 1979.

\bibitem{Mann53} 
W. R. Mann, 
Mean value methods in iteration,
{\em Proc. Amer. Math. Soc.},
vol. 4, pp. 506--510, 1953.

\bibitem{Merc80} 
B. Mercier,
{\em In\'equations Variationnelles de la M\'ecanique}
(Publications Math\'ematiques d'Orsay, no. 80.01).
Universit\'e de Paris-XI, Orsay, France, 1980. 

\bibitem{Neco22}
I. Necoara,
Stochastic block projection algorithms with extrapolation 
for convex feasibility problems,
{\em Optim. Methods Softw.},
vol. 37, pp. 1845--1875, 2022.

\bibitem{Neco21}
I. Necoara and A. Nedi\'c, 
Minibatch stochastic subgradient-based projection algorithms for
feasibility problems with convex inequalities,
{\em Comput. Optim. Appl.},
vol. 80, pp. 121--152, 2021. 

\bibitem{Neco19}
I. Necoara, P. Richt\'arik, and A. Patrascu, 
Randomized projection methods for convex feasibility: Conditioning
and convergence rates,
{\em SIAM J. Optim.},
vol. 29, pp. 2814--2852, 2019.

\bibitem{Nedi10}
A. Nedi\'c,
Random projection algorithms for convex set intersection problems,
{\em Proc. 49th IEEE Conf. Decis. Control},
pp. 7655--7660, 2010. 

\bibitem{Nedi11}
A. Nedi\'c,
Random algorithms for convex minimization problems,
{\em Math. Program.},
vol. B129, pp. 225--253, 2011.

\bibitem{Nedi19}
A. Nedi\'c and I. Necoara,
Random minibatch subgradient algorithms for convex problems with
functional constraints,
{\em Appl. Math. Optim.},
vol. 80, pp. 801--833, 2019.

\bibitem{Nguy19}
L. M. Nguyen, P. H. Nguyen, P. Richt\'arik, K. Scheinberg, M.
Tak\'a\v{c}, and M. van Dijk,
New convergence aspects of stochastic gradient algorithms,
{\em J. Mach. Learn. Res.},
vol. 20, pp. 1--49, 2019.

\bibitem{Pett38} 
B. J. Pettis,
On integration in vector spaces,
{\em Trans. Amer. Math. Soc.},
vol. 44, pp. 277--304, 1938.

\bibitem{Pier84} 
G. Pierra, 
Decomposition through formalization in a product space,
{\em Math. Program.},
vol. 28, pp. 96--115, 1984.

\bibitem{Reic79} 
S. Reich, 
Weak convergence theorems for nonexpansive mappings in Banach
spaces,
{\em J. Math. Anal. Appl.},
vol. 67, pp. 274--276, 1979.

\bibitem{Robi51}
H. Robbins and S. Monro, 
A stochastic approximation method, 
\emph{Ann. Math. Statist.}, 
vol. 22, pp. 400--407, 1951.

\bibitem{Robb71} 
H. Robbins and D. Siegmund, 
A convergence theorem for non negative almost supermartingales 
and some applications, in:
{\em Optimizing Methods in Statistics,} (J. S. Rustagi, Ed.), 
pp. 233--257. Academic Press, New York, 1971.

\bibitem{Rosa16}
L. Rosasco, S. Villa, and B. C. V\~u,
Stochastic forward-backward splitting for monotone inclusions,
{\em J. Optim. Theory Appl.},
vol. 169, pp. 388--406, 2016.

\bibitem{Rosa20}
L. Rosasco, S. Villa, and B. C. V\~u,
Convergence of stochastic proximal gradient algorithm, 
{\em Appl. Math. Optim.}, 
vol. 82, pp. 891--917, 2020.

\bibitem{Shir16}
A. N. Shiryaev,
{\em Probability--1}, 3rd ed.
Springer, New York, 2016.

\end{thebibliography}
\end{document}